\documentclass[a4paper]{amsart} 
\usepackage[pdftex]{graphicx,color} 
\usepackage{enumerate}
\usepackage{amssymb,amsmath,amscd}
\usepackage[mathscr]{euscript}

\usepackage{setspace} 

\newtheorem{theorem}{Theorem}[section] 
\newtheorem{lemma}[theorem]{Lemma} 
 
\newtheorem{proposition}[theorem]{Proposition} 
\newtheorem{corollary}[theorem]{Corollary} 

\theoremstyle{remark}
\newtheorem{remark}[theorem]{Remark} 

\theoremstyle{definition}
\newtheorem{definition}[theorem]{Definition}



\newcommand{\R} {{\mathbb R}} 
 
\newcommand{\Z} {{\mathbb Z}} 
\newcommand{\N} {{\mathbb N}}

\newcommand{\ep} {\epsilon}

\newcommand{\taz} {T_{z}} 
 
\newcommand{\tay} {T_{y}}

\newcommand{\D}{\partial}

\newcommand{\sman} {\mathcal M}

\newcommand{\innt} {\operatorname{int}}

\newcommand{\leb} {\operatorname{Leb}}

\newcommand{\wm} {\mathcal{M}}

\newcommand{\wa} {\mathcal{A}}  
\newcommand{\wb} {\mathcal{B}}
\newcommand{\we} {\mathcal{E}}  
\newcommand{\wu} {\mathcal{U}}  
\newcommand{\wv} {\mathcal{V}}
\newcommand{\ww} {\mathcal{W}}

\newcommand{\dx} {D_{x}}
\newcommand{\dy} {D_{y}}

\newcommand{\w}[1] {\tilde{#1}}

\renewcommand{\wr} {\mathcal{R}}
\newcommand{\wrn} {\mathcal{R}^{-}}
\newcommand{\wrp} {\mathcal{R}^{+}}

\newcommand{\wt} {\mathcal{F}}

\newcommand{\ws} {\mathcal{S}}
\newcommand{\wsn} {\mathcal{S}^{-}}
\newcommand{\wsp} {\mathcal{S}^{+}}
\newcommand{\wc} {\mathcal{C}}  
\newcommand{\wq} {\mathcal{Q}}         
\newcommand{\wk} {\mathcal{K}} 

\newcommand{\wy} {\mathscr{C}}
\newcommand{\Leb} {\mathcal{L}}

\renewcommand{\wp} {\mathcal{D}}
\newcommand{\wg} {\mathcal{G}_{\delta,c}}
\newcommand{\wh} {\mathcal{H}_{\delta,c,\alpha}}
\newcommand{\wj} {\mathcal{H}^{u}_{\delta,c,\alpha,M}}

\newcommand{\ch} {L_{\omega}}

\newcommand{\sic} {\sigma_{\wc}}
\newcommand{\sics} {\sigma^{*}_{\wc}}

\newcommand{\mau}{\mathcal{U}}

\graphicspath{{figures/}}

\begin{document} 
\title[A local ergodic theorem]{A local ergodic theorem for non-uniformly hyperbolic symplectic maps with singularities} 

\author[G. Del Magno]{Gianluigi Del Magno} \address{CEMAPRE, ISEG, Universidade Tecnica de Lisboa \\ 1200 Lisbon, Portugal} \email{delmagno@iseg.utl.pt} 
\author[R. Markarian]{Roberto Markarian} \address{Instituto de Matem\'atica y Estad\'istica ``Prof. Ing. Rafael Laguardia" (IMERL) \\
Facultad de Ingenier\'ia, Universidad de la Rep\'ublica \\
Montevideo, Uruguay} \email{roma@fing.edu.uy}

\date{\today} \keywords{Hyperbolicity, Local Ergodicity, Ergodicity, Bernoulli property} \subjclass[2000]{37D50, 37A25, 37D25, 37N05} 

\thanks{The first author would like to thank M. Lenci and C. Liverani for helpful discussions. A preliminary version of this paper was started during a visit of the first author to IMERL (Uruguay), whereas the paper in the current version was completed during a visit of the second author to CEMAPRE (Portugal). Both authors acknowledge the warm hospitality of IMERL and CEMAPRE. The first author was partially supported by Funda\c{c}\~{a}o para a Ci\^{e}ncia e Tecnologia through the project `Randomness in Deterministic Dynamical Systems and Applications' (PTDC/MAT/105448/2008). The second author acknowledges `Proyecto 720', Universidad de la Rep\'ublica, Uruguay.}

\begin{abstract} 
In this paper, we prove a criterion for the local ergodicity of non-uniformly hyperbolic symplectic maps with singularities. Our result is an extension of a theorem of Liverani and Wojtkowski. 
\end{abstract}

\maketitle

In this paper, we consider a class of invertible maps with discontinuities and unbounded derivatives, which model billiards and other physical systems. Let $ \wt $ be one of these maps, and assume that $ \wt $ preserves a symplectic form and is non-uniformly hyperbolic. The last condition means that the Lyapunov exponents of $ \wt $ are non-zero with respect to the symplectic volume, which is invariant. Our main result (Theorem \ref{th:LET}) establishes sufficient conditions for a point in the domain of $ \wt $ to have a neighborhood contained up to a set of zero measure in one ergodic component of $ \wt $. In fact, we prove a stronger result, namely, that the mentioned neighborhood is contained up to a set of zero measure in one Bernoulli component of $ \wt $. Results of this type are often called `local ergodic theorems'. Local ergodic theorems (LET's for short) play an essential role in the proof of the ergodicity of non-uniformly hyperbolic systems. For instance, suppose that we know that the set $ X $ of all points to which our LET applies has full measure, and that the map $ \wt $ is topological transitive. Then, we can conclude quite easily that $ \wt $ is ergodic. 
From our LET, we derive another criterion for the ergodicity of $ \wt $ based on the topology of the set $ X $ (Corollary \ref{co:LET}). 

The LET presented here is an extension of the LET of Liverani and Wojtkowski \cite{liwo95}. The two theorems differ by one of their hypotheses, namely, the one assuming the existence of an invariant continuous cone field $ \wc $. Whereas Liverani and Wojtkowski assume that $ \wc $ is defined everywhere on the interior of the domain of $ \wt $ (see Condition C \cite[Section 7]{liwo95}), we assume that $ \wc $ is defined only on an open subset of the domain of $ \wt $. This paper originated from an attempt to use the LET of Liverani and Wojtkowski to prove that the non-uniformly hyperbolic billiards introduced by Donnay \cite{do91} and independently by Bunimovich \cite{bu92} are ergodic. However, these billiards in general admit only an invariant piecewise continuous cone field, and so the LET of Liverani and Wojtkowski does not apply to all of them, at least not in an obvious way. Instead, our LET applies to the generality of Donnay's and Bunimovich's billiards, and ultimately allows us to prove their ergodicity. A detailed proof of this claim will appear elsewhere. 

The ideas behind the proof of our LET can be traced back to several seminal works: the work of Hopf on the ergodicity of the geodesic flow on a surface of negative curvature \cite{h}, the work of Anosov on uniformly hyperbolic systems \cite{an}, and the work of Sinai on the ergodicity of dispersing billiards \cite{s}. Sinai's work is particularly important for us, because it outlines a general method for proving the ergodicity of non-uniformly hyperbolic systems with discontinuities and unbounded derivatives. The proof of the LET of Liverani and Wojtkowski, on which the proof of our LET builds, is an improvement of Sinai's method. Other improvements of Sinai's method were accomplished by Sinai and Bunimovich \cite{bs}, Sinai and Chernov \cite{sc}, Kr{\'a}mli, Sim{\'a}nyi and Sz{\'a}sz \cite{kss1}, Chernov \cite{c} and Markarian \cite{ma90}. We should also mention that Burns and Gerber \cite{bg} and Katok \cite{kb} obtained LET's for smooth maps and smooth flows preserving smooth volume and contact flows. 
		
The proof of our LET follows closely that of Liverani and Wojtkowski, but several additions to their proof are required. The main one is an improved version of the Tail Bound (Proposition \ref{pr:tb}). Our proof, like the one of \cite{liwo95}, relies on the Katok-Strelcyn theory \cite{ks}, which extends Pesin's theory \cite{pe} to maps with singularities (discontinuities and unbounded derivatives), and the technique of invariant cones developed by Wojtkowski to prove the existence of positive Lyapunov exponents \cite{wo85,liwo95}. Because of the difference in the assumption on the invariant cone field $ \wc $, the remaining hypotheses of our LET are slightly different than the corresponding hypotheses of \cite{liwo95}. 
		
The paper is organized as follows. In the first section, we describe the class of maps to which our LET applies. Several notions are required to formulate and prove the LET. We introduce the notions of monotone quadratic forms and invariant cone fields in the first section, and the notions of sufficient and essential points in the second section. In the second section, we also prove some preliminary results concerning the basic hyperbolic properties of the maps considered (Proposition \ref{pr:man1} and Lemma \ref{le:tangent}). 
The third section is devoted to the formulation of the LET (Theorem \ref{th:LET}). As a corollary of the LET, we obtain a criterion for the ergodicity of a map, based on the topology of the set of all sufficient points of the map (Corollary \ref{co:LET}). The fourth section contains the proof of the LET. 

\section{Basic definitions} 
\label{se:basic}
In this section, we present the class of dynamical systems for which the LET holds. We also introduce the concepts of monotone quadratic forms and invariant cone fields. 

\subsection{Symplectic maps with singularities} \label{su:symplecticmaps}

\begin{definition}
	\label{de:regular}
	A compact subset $ \wa $ of a $ C^{2} $ manifold $ \sman $ of dimension $ k \ge 2 $ is called \emph{regular} if it is a union of finitely many compact subsets $ \wa_{1},\ldots,\wa_{n} $ of $ (k-1) $-dimensional $ C^{2} $ submanifolds of $ \sman $ such that 
	\begin{enumerate}
	\item each $ \wa_{i} $ is equal to the closure of its interior, 
	\item $ \wa_{i} \cap \wa_{j} \subset \D \wa_{i} $ for $ i \neq j $, 
	\item the boundary of each $ \wa_{i} $ is a union of finitely many compact subsets of $ (k-2) $-dimensional submanifolds of $ \sman $. 
	\end{enumerate}
	The sets $ \wa_{1},\ldots,\wa_{n} $ are called \emph{(regular) components of} $ \wa $. 
\end{definition}

Let $ (\sman,\omega) $ be a $ C^{2} $ symplectic compact manifold, possibly with boundary and corners, of dimension $ 2d $. The boundary $ \D \sman $ is assumed to be regular. The symplectic form $ \omega $ is allowed to degenerate, but only on $ \D \wm $. The manifold $ \sman $ is also equipped with a Riemannian metric $ g $. The corresponding norm and distance are denoted by $ \|\cdot\| $ and $ d $, respectively. The volume measures on $ \sman $ generated by $ g $ and $ \omega $ are denoted by $ \Leb $ and $ \mu $, respectively. Since $ \Leb(\D \wm) = 0 $ (a direct consequence of Proposition \ref{pr:vol}) and $ \mu $ and $ \Leb $ are generated by volume forms on $ \wm \setminus \D \wm $, we easily see that $ \Leb $ and $ \mu $ are equivalent (i.e., mutually absolutely continuous) and $ \mu \le \tau \Leb $ for some constant $ \tau>0 $. Given a subset $ \wa \subset \sman $ and $ \ep>0 $, let $ \wa(\ep) = \{y \in \sman: d(y,\wa)<\ep \} $ be the $ \ep $-neighborhood of $ \wa $. 

\begin{remark}
Unless otherwise specified, `almost everywhere' statements in Sections \ref{se:basic}-\ref{se:statementlet} have to be understood with respect to the measure $ \Leb $. In fact, we could use $ \mu $ as the reference measure as well, because $ \Leb $ and $ \mu $ are equivalent.
\end{remark}

\begin{definition}
	\label{de:map}
	Let $ (\wm,\omega,g) $ be a symplectic manifold with a metric $ g $ as described above, and suppose that there exist two regular subsets $ \wsp_{1} $ and $ \wsn_{1} $ of $ \sman $ and a $ C^{2} $ diffeomorphism $ \wt:\sman \setminus (\D \sman \cup \wsp_{1}) \to \sman \setminus (\D \sman \cup \wsn_{1}) $ such that
	\begin{enumerate} 
		\item $ \ws^{\pm}_{1} \cap \D \sman \subset \D \ws^{\pm}_{1} $;
		\item $ \wt $ preserves $ \omega $; 
		\item the differential of $ \wt $ satisfies the following conditions:
		\begin{itemize}
			\item $ \log^{+} \|\dx \wt \| $, $ \log^{+} \|\dx \wt^{-1} \| \in L^{1}(\mu) $, where $ \log^{+} x=\max \{0,\log x\} $;
			\item there are two constants $ A>1 $ and $ b >0 $ such that \[ \| \dx^{2} \wt \| \le \frac{A}{ d(x,\wr)^{b}}, \]
			where $ \wr = \wr^{-}_{1} \cup \wr^{+}_{1} $ and $ \wr^{\pm}_{1} = \D \sman \cup \ws^{\pm}_{1} $. 
		\end{itemize}
	\end{enumerate}  
	The system $ (\wm,\omega,g,\wt) $ is called a \emph{symplectomorphism with singular set} $ \wr $. 
	
\end{definition}

\begin{remark}
	Condition (3) of Definition \ref{de:map} incorporates Conditions 1.2 and 1.3 of \cite{ks}. We also observe that in our setting, Condition 1.1 of \cite{ks} follows from the regularity of $ \wrn_{1} $ and $ \wrp_{1} $. For the convenience of the reader, we recall that Condition 1.1 of \cite{ks} reads as follows: there exist positive constants $ C $ and $ a $ such that $ \mu(\wr^{\pm}_{1}(\epsilon)) \le C \ep^{a} $ for every $ \epsilon>0 $ sufficiently small.
\end{remark}

Probably, the most notable example of a symplectomorphism with singularities is the map associated to a billiard system, i.e, the mechanical system consisting of a ball moving without frictions inside a bounded domain $ B $ of $ \R^{n} $ with elastic collisions at the boundary $ \D B $ (see \cite{cm}). 


\begin{remark}
	\label{re:notneeded}
	From Definition \ref{de:map}, it follows that the sets $ \sman \setminus \wrp_{1} $ and $ \sman \setminus \wrn_{1} $ have the same finite number of connected components, and the transformation $ \wt $ maps diffeomorphically each connected component of $ \sman \setminus \wrp_{1} $ into a connected component of $ \sman \setminus \wrn_{1} $. We do not require as in \cite{liwo95} that $ \wt(\wt^{-1}) $ have a homeomorphic extension up to the boundary of each connected component of $ \sman \setminus \wrp_{1}(\sman \setminus \wrn_{1}) $. The reason is that although this property is explicitly required in \cite{liwo95}, a careful analysis of the proofs in that paper reveals that it is in fact never used.  
	
	
\end{remark}

For every subset $ \mathcal{A} \subset \wm $, we adopt the convention that $ \wt \mathcal{A} = \wt (\mathcal{A} \setminus \wrp_{1}) $ and $ \wt^{-1} A = \wt^{-1} (\mathcal{A} \setminus \wrn_{1}) $. This way, we make sense of the expressions $ \wt \mathcal{A} $ and $ \wt^{-1} A $ although $ \wt $ and $ \wt^{-1} $ are, strictly speaking, defined only on the subsets of $ \wm \setminus \wrp_{1} $ and $ \wm \setminus \wrn_{1} $, respectively.

\begin{definition}
	\label{de:singular}
	For every $ n>1 $, define recursively the sets 
\begin{align*} 
\wrp_{n} & = \wrp_{n-1} \cup \wt^{-1} \wrp_{n-1}, \\
 \wrn_{n} & = \wrn_{n-1} \cup \wt \wrn_{n-1}.
\end{align*}
\end{definition}

\begin{remark}
The map $ \wt^{n}:\sman \setminus \wrp_{n} \to \sman \setminus \wrn_{n} $ is a $ C^{2} $ diffeomorphism preserving the form $ \omega $ for every $ n \ge 1 $. 
\end{remark}

Every regular set $ \wa \subset \sman $ is endowed with a natural measure $ \Leb_{\wa} $ defined as follows. If $ \wa_{1},\ldots,\wa_{n} $ are the components of $ \wa $, let $ \Leb_{\wa_{i}} $ be the restriction to $ \wa_{i} $ of the volume measure induced by $ g $ on the submanifold containing $ \wa_{i} $. The measure $ \Leb_{\wa} $ is then given by $ \Leb_{\wa}(\wb) = \bigcup^{n}_{i=1} \Leb_{\wa_{i}}(\wb) $ for every measurable $ \wb \subset \wa $. 

\begin{definition}
	Let $ \Leb_{+} = \Leb_{\wsp_{1}} $ and $ \Leb_{-} = \Leb_{\wsn_{1}} $.
\end{definition}

The following proposition (see \cite[Proposition 7.4]{liwo95}) will be used several times in the proof of the LET. 
                                                         
\begin{proposition}                
	\label{pr:vol}
	Let $ \wa $ be a regular subset of $ \sman $, and let $ \wb $ be a closed subset of $ \wa $. Then, 
	\[ \lim_{\ep \to 0^{+}} \frac{\Leb(\wb(\ep))}{\ep} = 2 \Leb_{\wa}(\wb). \]
\end{proposition}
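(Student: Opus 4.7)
The plan is to reduce the statement to a single smooth hypersurface component, apply the standard tubular neighborhood theorem there, and then check that the corrections coming from curvature, from boundaries of components, and from intersections among distinct components are all $O(\ep^{2})$, so that they vanish after dividing by $\ep$ and letting $\ep \to 0^{+}$.

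Let $ \wa_{1},\ldots,\wa_{n} $ be the regular components of $ \wa $ and set $ \wb_{i} = \wb \cap \wa_{i} $, which is closed in $ \wa_{i} $. From $ \wb(\ep) = \bigcup_{i} \wb_{i}(\ep) $ and inclusion-exclusion,
\[
\sum_{i} \Leb(\wb_{i}(\ep)) - \sum_{i<j} \Leb(\wb_{i}(\ep) \cap \wb_{j}(\ep)) \le \Leb(\wb(\ep)) \le \sum_{i} \Leb(\wb_{i}(\ep)).
\]
For each pair $ i \ne j $, condition (2) of Definition \ref{de:regular} places $ \wa_{i} \cap \wa_{j} $ inside $ \D \wa_{i} $, a compact subset of a $(k-2)$-dimensional submanifold; so $ \wb_{i}(\ep) \cap \wb_{j}(\ep) \subset (\wa_{i} \cap \wa_{j})(2\ep) $, and a standard volume bound for $ \ep $-neighborhoods of compact $(k-2)$-submanifolds in a $ k $-manifold gives $ \Leb((\wa_{i} \cap \wa_{j})(2\ep)) = O(\ep^{2}) $. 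Hence it suffices to prove the asymptotic $ \Leb(\wb_{i}(\ep)) = 2\ep \Leb_{\wa_{i}}(\wb_{i}) + O(\ep^{2}) $ for each single component.

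Fix $i$ and let $ N $ be the ambient $ C^{2} $ hypersurface containing $ \wa_{i} $. Because $ \wb_{i} $ is compact, there is a uniform tubular neighborhood of $ N $ around $ \wb_{i} $; using the exponential map along the unit normal bundle of $ N $ we obtain, for all sufficiently small $ \ep $, a $ C^{1} $ diffeomorphism from $ \wb_{i} \times (-\ep,\ep) $ onto an open set $ U_{\ep} \supset \wb_{i} $, whose Jacobian (with respect to $ \Leb_{\wa_{i}} \otimes dt $) equals $ 1 + O(t) $ uniformly on $ \wb_{i} $, the constant depending on $ \|A_{N}\|_{\infty} $. Integrating, the $ \ep $-tube around $ \wb_{i} $ inside this chart has volume $ 2\ep\Leb_{\wa_{i}}(\wb_{i}) + O(\ep^{2}) $. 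Points of $ \wb_{i}(\ep) $ lying outside this tube must be within distance $ \ep $ of $ \D \wa_{i} $; since $ \D \wa_{i} $ is a finite union of compact subsets of $(k-2)$-submanifolds, their contribution is again $O(\ep^{2})$. Combining,
\[
\Leb(\wb_{i}(\ep)) = 2\ep\Leb_{\wa_{i}}(\wb_{i}) + O(\ep^{2}).
\]
Summing over $ i $, dividing by $ \ep $, and letting $ \ep \to 0^{+} $ yields the stated limit, using the definition $ \Leb_{\wa}(\wb) = \sum_{i} \Leb_{\wa_{i}}(\wb_{i}) $ (the $ \Leb_{\wa_{i}} $-measure of the pairwise intersections $ \wa_{i} \cap \wa_{j} $ being zero as they lie in $\D\wa_{i}$, which has codimension one inside $\wa_{i}$).

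The main technical nuisance is not the tubular estimate itself, which is classical, but book-keeping the three error sources at once: the curvature of the components (giving the $1+O(t)$ Jacobian), the tube-pieces emanating from the $(k-2)$-dimensional boundaries $\D\wa_{i}$, and the overlaps between neighborhoods of different components. All three are controlled by the same mechanism, namely that an $\ep$-neighborhood of a compact subset of a $(k-2)$-submanifold has $\Leb$-measure $O(\ep^{2})$, so the proof really consists of identifying each error term and invoking this single estimate.
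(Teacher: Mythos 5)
First, a point of comparison: the paper does not actually prove this proposition; it is quoted directly from Liverani--Wojtkowski (Proposition 7.4 of \cite{liwo95}), so your argument has to stand on its own. Your overall strategy -- a normal tube over each component plus control of error terms -- is the standard one, but two of your concrete steps are false as written, both for the same reason: you locate the ``excess'' points in the wrong set and consequently claim an $O(\ep^{2})$ bound that does not hold in general.

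First, the inclusion $\wb_{i}(\ep)\cap\wb_{j}(\ep)\subset(\wa_{i}\cap\wa_{j})(2\ep)$ is wrong. A point within $\ep$ of both components need not be anywhere near $\wa_{i}\cap\wa_{j}$: condition (2) of Definition \ref{de:regular} forces the components to \emph{intersect} only inside their boundaries, but it does not prevent them from being tangent there, nor from approaching each other closely away from their intersection. For instance, $\wa_{1}=\{(s,0):0\le s\le1\}$ and $\wa_{2}=\{(s,e^{-1/s}):0\le s\le1\}$ meet only at the origin, yet $\wa_{1}(\ep)\cap\wa_{2}(\ep)$ has measure of order $\ep/\log(1/\ep)$ -- which is $o(\ep)$ but not $O(\ep^{2})$ -- and is not contained in any $C\ep$-neighborhood of the origin. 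Second, a point of $\wb_{i}(\ep)$ lying outside the normal tube over $\wb_{i}$ need not be within $\ep$ of $\D\wa_{i}$: its foot point on the ambient hypersurface merely lies within $2\ep$ of $\wb_{i}$ without belonging to $\wb_{i}$. Since $\wb$ is an \emph{arbitrary} closed subset of $\wa$ (think of a fat Cantor set inside a segment), this excess is spread over all of $\wa_{i}$, and its measure can tend to zero arbitrarily slowly relative to $\ep$; the asymptotic $\Leb(\wb_{i}(\ep))=2\ep\Leb_{\wa_{i}}(\wb_{i})+O(\ep^{2})$ is simply false. Both defects are repaired by the same standard device: inside the tubular neighborhood of the ambient hypersurface $N_{i}$, project onto $N_{i}$ and bound the offending volume by $2\ep(1+O(\ep))$ times the $\Leb_{N_{i}}$-measure of $\{w\in N_{i}:d(w,K)\le2\ep\}$, with $K=\wb_{i}$ in the second case and $K=\wa_{i}\cap\wa_{j}$ (a $\Leb_{N_{i}}$-null set) in the first; continuity of the measure from above then yields $2\ep\Leb_{\wa_{i}}(\wb_{i})+o(\ep)$, respectively $o(\ep)$, which is all the limit statement requires -- but it is also all one can get.
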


We will also need the notion of characteristic line from symplectic geometry.

\begin{definition}
	Let $ (\mathcal{V},\omega) $ be a linear symplectic space, and let $ X $ a codimension 1 subspace of $ \mathcal{V} $. Then the \emph{characteristic line} $ \ch(X) $ is the skew-orthogonal complement of $ X $, i.e., 
	\[ 
	\ch(X) = \left\{u \in \mathcal{V}: \omega(u,v) = 0 \text{ for all } v \in X \right\}.
	\]
\end{definition}

\subsection{Quadratic forms and invariant cone fields} 
\label{su:quad}
We now summarized the relevant material on monotone quadratic forms and invariant cone fields from \cite{liwo95}. 

Let $ (\sman,\omega,g,\wt) $ be a symplectomorphism with singularities. Let $ U $ be an open subset of $ \sman $, and consider two families $ A=\{A_{x}\}_{x \in U} $ and $ B=\{B_{x}\}_{x \in U} $ of transverse Lagrangian subspaces $ A_{x},B_{x} \subset T_{x} \sman $ for $ x \in U $ such that the mappings $ x \mapsto A_{x} $ and $ x \mapsto B_{x} $ are measurable.                                 

\begin{definition}
	\label{de:quadratic}
We define a quadratic form $ \wq = \{\wq_{x}\}_{x \in U} $ on $ U $ associated to the transverse Lagrangian families $ A $ and $ B $ by
\[ \wq_{x}(u) = \omega_{x}(u_{1},u_{2}) \qquad \text{for every } u \in T_{x} \sman \text{ and } x \in U, \]
where $ u_{1} \in A_{x} $ and $ u_{2} \in B_{x} $ are uniquely defined by $ u=u_{1}+u_{2} $.
\begin{itemize}
	\item $ \wq $ is \emph{continuous} if the mappings $ x \mapsto A_{x} $ and $ x \mapsto B_{x} $ are continuous;
	\item $ \wq $ is \emph{monotone (with respect to $ \wt $)} if $ \wq_{\wt^{k} x}(D_{x} \wt^{k} u) \ge \wq_{x} (u) $ for every $ u \in T_{x} \sman $, $ x \in U $ and $ k>0 $ such that $ \wt^{k}x \in U $;
	\item $ \wq $ is \emph{eventually strictly monotone (with respect to $ \wt $)} if it is monotone, and for a.e. $ x \in U $, there exists $ k(x)>0 $ such that $ \wt^{k}x \in U $ and $ \wq_{\wt^{k(x)} x}(D_{x} \wt^{k(x)} u) > \wq_{x} (u) $ for every $ u \in T_{x} \sman \setminus \{0\} $. 
\end{itemize}
\end{definition}                                                        

\begin{definition}
	\label{de:cone}
Let $ \wq $ be the quadratic form associated to the transverse Lagrangian families $ A $ and $ B $. The cone field $ \wc = \{\wc(x)\}_{x \in U} $ on $ U $ associated to $ A $ and $ B $ is the family of closed cones given by 
\[ 
\wc(x) = \wq^{-1}_{x}\left([0,+\infty)\right) \subset T_{x} \sman \qquad \text{for every } x \in U,
\]
For every $ x \in U $, the interior of $ \wc(x) $ is the cone 
\[
\innt \wc(x) = \wq^{-1}_{x}\left((0,+\infty)\right) \cup \{0\} \subset T_{x} \sman. 
\] 
We say that
\begin{itemize} 
	\item $ \wc $ is \emph{continuous} if the mappings $ x \mapsto A_{x} $ and $ x \mapsto B_{x} $ are continuous;
\item $ \wc $ is \emph{invariant (with respect to $ \wt $)} if $ D_{x} \wt^{k} \wc(x) \subset \wc(\wt^{k} x) $ for every $ x \in U $ and $ k>0 $ such that $ \wt^{k}x \in U $;
\item $ \wc $ is \emph{eventually strictly invariant (with respect to $ \wt $)} if it is invariant, and for a.e. $ x \in U $, there exists an integer $ k(x)>0 $ such that $ \wt^{k(x)}x \in U $ and $ D_{x} \wt^{k(x)} \wc(x) \subset \innt \wc(\wt^{k(x)} x) $. 
\end{itemize}
In this paper, a cone field and the corresponding quadratic form are always associated to two families $ A $ and $ B $ of Lagrangian subspaces. However, to avoid cumbersome notation, every time we introduce a cone field, we will not specify the families $ A $ and $ B $.
\end{definition}

The relation between the monotonicity of $ \wq $ and the invariance of $ \wc $ is established by the following proposition.

\begin{proposition}[Theorem 4.4 of \cite{liwo95}]
The cone field $ \wc $ is invariant(eventually strictly invariant) if and only if $ \wq $ is monotone(eventually strictly monotone).  
\end{proposition}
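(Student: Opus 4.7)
The plan is to reduce the proposition to a pointwise fact about quadratic forms on a symplectic vector space. Fix $x \in U$ and $k > 0$ with $\wt^k x \in U$, and set $Q_{1} = \wq_{x}$ and $Q_{2}(u) := \wq_{\wt^{k} x}(D_{x} \wt^{k} u)$ for $u \in T_{x} \sman$. Because $\wt^{k}$ preserves $\omega$, the pair $\bigl((D_{x}\wt^{k})^{-1}A_{\wt^{k} x},\ (D_{x}\wt^{k})^{-1}B_{\wt^{k} x}\bigr)$ is again a transverse Lagrangian splitting of $(T_{x}\sman,\omega_{x})$, and $Q_{2}$ is the quadratic form associated to it via Definition \ref{de:quadratic}. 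Under this identification, (eventual strict) monotonicity of $\wq$ becomes the pointwise inequality $Q_{1} \le Q_{2}$ (with $Q_{1}(u) < Q_{2}(u)$ for $u \neq 0$ at some $k = k(x)$), while (eventual strict) invariance of $\wc$ becomes the corresponding (strict) inclusion $\{Q_{1} \ge 0\} \subset \{Q_{2} \ge 0\}$.

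The easy half is the forward implication. If $\wq$ is monotone and $u \in \wc(x)$, then $Q_{1}(u) \ge 0$, hence $Q_{2}(u) \ge Q_{1}(u) \ge 0$, so $D_{x} \wt^{k} u \in \wc(\wt^{k} x)$; the strict version propagates by the same chain, using $Q_{2}(u) > Q_{1}(u) \ge 0$ for $u \neq 0$. The reverse implication rests on the following symplectic-linear-algebra lemma: if $Q_{1}$ and $Q_{2}$ are two quadratic forms on $(V,\omega)$ associated to transverse Lagrangian splittings, then $\{Q_{1} \ge 0\} \subset \{Q_{2} \ge 0\}$ forces $Q_{1} \le Q_{2}$, with strict cone inclusion yielding strict inequality on $V \setminus \{0\}$.

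To prove the lemma I would choose a symplectic basis adapted to the splitting of $Q_{1}$, so that in block coordinates $u = (p,q) \in \R^{d} \oplus \R^{d}$ one has $Q_{1}(u) = \langle p, q \rangle$. Any Lagrangian transverse to $\{q = 0\}$ (respectively $\{p = 0\}$) is the graph of a symmetric $d \times d$ matrix, so the second splitting is encoded by a pair of symmetric matrices $S, T$ with $I - TS$ invertible. One then writes $Q_{2}(u)$ as an explicit quadratic form in $(p,q)$ whose matrix depends algebraically on $S$ and $T$, and checks that the semidefiniteness of $Q_{2} - Q_{1}$ is equivalent to the cone containment by a direct computation on the boundary $\{Q_{1} = 0\}$, where the isotropic vectors generate the obstruction to positivity.

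The main obstacle is precisely this symplectic linear algebra lemma — the proposition itself is stated as Theorem 4.4 of \cite{liwo95}, and once the reduction above has been made, the lemma can be invoked as a black box. The eventually strict case adds nothing conceptually new: one simply applies the strict form of the lemma at the integer $k = k(x)$ supplied by the definition, on the full-measure set of $x \in U$ where strict monotonicity (respectively strict invariance) is tested.
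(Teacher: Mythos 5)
This proposition is quoted verbatim from Theorem 4.4 of \cite{liwo95}; the paper offers no proof of its own, so your decision to treat the hard step as a citable black box is consistent with what the authors actually do. Your reduction to a pointwise statement about two quadratic forms $Q_{1},Q_{2}$ on a single symplectic vector space is correct, and the forward implication (monotone $\Rightarrow$ invariant, including the strict version) is indeed the trivial chain of inequalities you give.

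The reverse implication is where your sketch has a genuine gap, and it is worth being precise about why. First, a Lagrangian subspace transverse to $B=\{p=0\}$ is the graph of a symmetric matrix over $A=\{q=0\}$, but the second splitting $(A',B')$ is only required to be transverse to \emph{itself}, not to $A$ or $B$; so the parametrization by a pair $(S,T)$ of symmetric matrices does not cover all cases (e.g.\ $A'=B$), and the degenerate configurations cannot simply be ignored since they occur for actual derivatives $D_{x}\wt^{k}$. Second, and more seriously, the assertion that ``semidefiniteness of $Q_{2}-Q_{1}$ is equivalent to the cone containment by a direct computation on the boundary $\{Q_{1}=0\}$'' is precisely the nontrivial content of the theorem and cannot be true at the level of generality at which you state it: for arbitrary quadratic forms the implication $\{Q_{1}\ge 0\}\subset\{Q_{2}\ge 0\}\Rightarrow Q_{1}\le Q_{2}$ fails (take $Q_{2}=2Q_{1}$, which has the same cone but is smaller than $Q_{1}$ wherever $Q_{1}<0$). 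The implication holds only because forms of the type $\wq_{x}(u)=\omega_{x}(u_{1},u_{2})$ carry a rigid normalization tied to $\omega$; the proof in \cite{liwo95} exploits this by establishing a variational formula that recovers $Q$ outside the open cone from $\omega$ and the cone alone, and monotonicity of that formula under a symplectic map respecting the cone inclusion is what yields $Q_{1}\le Q_{2}$. Without either reproducing such a characterization or explicitly citing it, your ``direct computation'' does not close the argument; if you intend to invoke Theorem 4.4 of \cite{liwo95} as a black box, you should do so for the whole equivalence rather than for a linear-algebra lemma restated in a form that, as written, is false.
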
 

\begin{definition}
Let $ \wc $ be a cone field on an open set $ U $. We define the complementary cone field $ \wc' $ of $ \, \wc $ by replacing $ [0,+\infty) $ with  $ (-\infty,0] $ in the definition of $ \wc $. Analogously, we define the set $ \innt \wc'(x) $ by replacing $ (0,+\infty) $ with $ (-\infty,0) $ in the definition of $ \innt \wc(x) $. 
\end{definition}

The next lemma shows that the notion of invariance for $ \wc $ can be equivalently formulated using $ \wc' $. 

\begin{lemma}[Proposition 6.2 of \cite{liwo95}]
	Let $ \wc $ be a cone field on an open set $ U $. Then,
	\begin{itemize}
		\item $ \wc $ is invariant if and only if $ D_{x} \wt^{-1} \wc'(x) \subset \wc'(\wt^{-k} x) $ for every $ x \in U $ and $ k>0 $ such that $ \wt^{-k}x \in U $; 
		\item $ \wc $ is eventually strictly invariant if and only if it is invariant, and for a.e. $ x \in U $, there exists an integer $ k(x)>0 $ such that $ D_{x} \wt^{-k(x)} \wc'(x) \subset \innt \wc'(\wt^{-k(x)} x) $.
	\end{itemize}
\end{lemma}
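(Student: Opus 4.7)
The plan is to reduce both bullets to a pointwise linear statement on each tangent space. Fix a point $p \in U$ and an integer $k > 0$ with $\wt^k p \in U$, set $q = \wt^k p$ and $L = D_p \wt^k$; since $\wt$ preserves $\omega$, $L \colon T_p \sman \to T_q \sman$ is a linear symplectic isomorphism. I will prove (a)~$L \wc(p) \subset \wc(q) \iff L^{-1} \wc'(q) \subset \wc'(p)$ and (b)~$L \wc(p) \subset \innt \wc(q) \iff L^{-1} \wc'(q) \subset \innt \wc'(p)$. Granted (a), the first bullet follows by applying the equivalence at every admissible pair $(p,k)$ and reindexing via $x = \wt^k p$. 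Granted (b), the second bullet reduces to a measure-theoretic promotion of the a.e.\ set from the forward to the backward side.

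The key linear input is that the quadratic form $\wq_p$ associated with the transverse Lagrangians $A_p, B_p$ of a $2d$-dimensional symplectic space has signature $(d,d)$: in a Darboux basis with $e_i \in A_p$, $f_j \in B_p$ and $\omega_p(e_i, f_j) = \delta_{ij}$, one computes $\wq_p(\sum_i a_i e_i + \sum_i b_i f_i) = \sum_i a_i b_i$, which is indefinite and nondegenerate. Consequently $\{\wq_p > 0\}$ and $\{\wq_p < 0\}$ are nonempty open sets, $\wc(p) = \overline{\{\wq_p > 0\}}$, $\wc'(p) = \overline{\{\wq_p < 0\}}$, and any nonzero $v$ with $\wq_p(v) \le 0$ is a limit of vectors satisfying $\wq_p < 0$ (perturb $v$ along a direction $w$ with $B_p(v,w) < 0$, where $B_p$ is the polarization of $\wq_p$). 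The analogous facts hold at $q$.

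For (a), assume $L \wc(p) \subset \wc(q)$ and take $v \in \wc'(q) \setminus \{0\}$ with $u = L^{-1} v$. If $\wq_p(u) > 0$, then $u$ has an open neighborhood $N \subset \{\wq_p > 0\} \subset \wc(p)$, so $L(N)$ is an open neighborhood of $v$ inside $\wc(q) = \{\wq_q \ge 0\}$; this contradicts the density of $\{\wq_q < 0\}$ near $v$. Hence $u \in \wc'(p)$. The converse comes from the same argument with $\wc \leftrightarrow \wc'$ and $L \leftrightarrow L^{-1}$. For (b), take $v \in \wc'(q)$ with $u = L^{-1} v$: if $u \ne 0$ and $\wq_p(u) \ge 0$, then $u \in \wc(p)$, so $v = Lu \in \innt \wc(q) = \{\wq_q > 0\} \cup \{0\}$; since $v = 0$ would imply $u = 0$, we have $\wq_q(v) > 0$, contradicting $v \in \wc'(q)$. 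Thus $u = 0$ or $\wq_p(u) < 0$, i.e., $u \in \innt \wc'(p)$. The converse is analogous.

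The remaining task is to transfer the a.e.\ hypothesis of eventual strict invariance through (b). A useful observation, deduced from invariance of $\wc$, is that $D\wt^j$ sends the open set $\{\wq > 0\}$ into itself: its image is open and contained in the closed set $\wc$, hence in the topological interior $\{\wq > 0\}$. It follows that if strict forward invariance holds at $p$ with index $k(p)$, it holds at $p$ with every larger index whose iterate stays in $U$. Combining this with Poincar\'e recurrence for $\wt$ (preserving the finite measure $\mu$) and a truncation $\{p \in U : k(p) \le N\}$, one verifies that the set of $q \in U$ where strict backward invariance of $\wc'$ holds has full measure. I expect this final measure-theoretic bookkeeping to be the only nonroutine step; the pointwise linear claims are immediate consequences of the $(d,d)$-signature of $\wq$.
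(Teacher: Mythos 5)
Your argument is correct, but note that the paper itself supplies no proof of this lemma: it is quoted directly from Liverani--Wojtkowski (Proposition 6.2 of \cite{liwo95}), so there is no internal proof to compare against, and what you have written is essentially a self-contained reconstruction of their argument. The two pointwise equivalences (a) and (b) are right, and the mechanism you use --- $\wq_x$ has signature $(d,d)$ because $A_x,B_x$ are transverse Lagrangians, hence $\wc(x)$ is the closure of $\{\wq_x>0\}$, the topological interior of $\wc(x)$ is $\{\wq_x>0\}$, and every nonzero null vector is approximated from $\{\wq_x<0\}$ --- is exactly what makes the duality $L\,\wc(p)\subset\wc(q)\iff L^{-1}\wc'(q)\subset\wc'(p)$ work, and likewise its strict version. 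The only part you leave schematic is the a.e.\ transfer in the second bullet. For the record, the assembly does close up, but the recurrence must be applied to the truncated sets themselves: let $G_N$ be the set of $p\in U$ admitting a strict forward index $k(p)\le N$; by Poincar\'e recurrence for $\wt^{-1}$, almost every $q\in G_N$ has a backward return $p=\wt^{-j}q\in G_N$ with $j\ge N\ge k(p)$; your monotonicity observation upgrades the strict inclusion at $p$ from index $k(p)$ to index $j$, and (b) converts it into strict backward invariance of $\wc'$ at $q$ with index $j$. Since $\bigcup_N G_N$ has full measure in $U$, this yields the conclusion for a.e.\ $q\in U$. Had you recurred only to $U$ or to the full-measure set of strict points, you could not have guaranteed $j\ge k(p)$; so the truncation is genuinely needed, and you correctly identified it, but this is the one step worth writing out in full.
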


In the next definition, we formalize the notion of least expansion of the iterates of $ D \wt $ with respect to the quadratic form $ \wq $, and with respect to $ \wq $ and the norm $ \|\cdot\| $ together.

\begin{definition}
Let $ \wc $ be an invariant cone field on $ U $, and let $ \wq $ be the quadratic form generating it. For every $ x \in U $ and $ k>0 $ such that $ \wt^{k}x \in U $, let 
\[
\sic(\dx \wt^{k}) = \inf_{u \in \innt \wc(x)} 
\sqrt{\frac{\wq_{\wt^{k} x}(\dx \wt^{k} u)}{\wq_{x}(u)}}
\] 
and
\[
\sics(\dx \wt^{k}) = \inf_{u \in \innt \wc(x)} 
\frac{\sqrt{\wq_{\wt^{k}x}(\dx \wt^{k} u)}}{\|u\|}.
\]
For $ k<0 $, we define $ \sic $ and $ \sics $ by replacing the cone field $ \wc $ in the definitions above with its complementary cone field $ \wc' $. 
\end{definition}

\begin{remark}
	\label{re:invariance}
From the invariance of $ \wc $, it follows immediately that $ \sic(\dx \wt^{k}) \ge 1 $. Furthermore, if $ \dx \wt^{k} \wc(x) \subset \innt \wc(\wt^{k}x) $, then $ \sic(\dx \wt^{k})>1 $ \cite[Proposition 6.1]{liwo95}. 
\end{remark}

To conclude this section, we introduce the notion of joint invariance for two cone fields. Roughly speaking, if the cone fields $ \wc_{1} $ and $ \wc_{2} $ are jointly invariant, then $ \wc_{2} $ can be though as an extension of $ \wc_{1} $, and vice versa.

\begin{definition}
	Let $ \wc_{1} $ and $ \wc_{2} $ be two cone fields defined on the open sets $ U_{1} $ and $ U_{2} $, respectively. We say that $ \wc_{1} $ and $ \wc_{2} $ are \emph{jointly invariant} if 
	\begin{itemize}
		\item $ D_{x}\wt^{k} \wc_{1}(x) \subset \wc_{2}(\wt^{k}x) $ for every $ x \in U_{1} $ and $ k>0 $ such that $ \wt^{k}x \in U_{2} $,
		\item $ D_{x}\wt^{k} \wc_{2}(x) \subset \wc_{1}(\wt^{k}x) $ for every $ x \in U_{2} $ and $ k>0 $ such that $ \wt^{k}x \in U_{1} $.
	\end{itemize}
\end{definition}

\begin{remark}
	Note that in the previous definition, we neither require that the sets $ U_{1} $ and $ U_{2} $ are disjoint nor that the cone fields $ \wc_{1} $ and $ \wc_{2} $ are invariant. However, it is easy to see that $ \wc_{1} $ and $ \wc_{2} $ are invariant in the following sense: if $ x \in U_{1} $ and $ k_{2}>k_{1}>0 $ such that $ \wt^{k_{1}}x \in U_{2} $ and $ \wt^{k_{2}}x \in U_{1} $, then $ \dx \wt^{k_{2}} \wc_{1}(x) \subset \wc_{1}(\wt^{k_{2}}x) $. The same is true for $ \wc_{2} $, once $ U_{1} $ has been replaced by $ U_{2} $.
\end{remark}

\section{Sufficient points, essential points and non-uniform hyperbolicity}
\label{su:sepoints}


The notions of sufficient and essential points introduced in this section are borrowed from \cite{c}. 

\begin{definition}
	\label{de:suff} A point $ x \in \sman \setminus \D \sman $ is called \emph{sufficient} if there exist 
	\begin{enumerate}[(i)] 
		\item an integer $ l $ such that $ \wt^{l} $ is a local diffeomorphism at $ x $, 
		\item a neighborhood $ U $ of $ \wt^{l}x $ and an integer $ N>0 $ such that $ U \cap \wrn_{N} = \emptyset $, 
		\item an invariant continuous cone field $ \wc $ on $ U \cup \wt^{-N} U $ such that $ \sic(D_{y} \wt^{N}) > 3 $ for every $ y \in \wt^{-N} U $.
	\end{enumerate} 
To emphasize the role of $ l,N,U $ and $ \wc $ in this definition, we say that $ x $ is a sufficient point with quadruple $ (l,N,U,\wc) $. 
\end{definition}

\begin{remark}
The specific amount of expansion $ \sic(D_{y} \wt^{N})>3 $ in the definition of a sufficient point is required only for maps with singularities. For smooth maps, the weaker condition $ \sic(D_{y} \wt^{N})>1 $ suffices. The condition $ \sic(D_{y} \wt^{N})>3 $ is used in the proof of Proposition \ref{pr:sinai1} (see also Proposition 12.2 of \cite{liwo95}). 
\end{remark}

\begin{remark}
	\label{re:everysufficient}
Every point of the neighborhood $ U $ is a sufficient point with quadruple $ (0,N,U,\wc) $. 
\end{remark}

The cone field $ \wc $ in the definition of a sufficient point is clearly eventually strictly invariant. By a well-known result of Wojtkowski \cite{wo85} (see also \cite{ma88,kb}, for the same result formulated using of quadratic forms), it follows that all the Lyapunov exponents of $ \wt $ are non-zero a.e. on the set $ \bigcup_{k \in \Z} \wt^{k}U $. This fact and the Katok-Strelcyn theory \cite{ks} imply Proposition \ref{pr:man1} below. Actually, Claim (3) of the proposition follows from Proposition \ref{pr:ref2} in Section \ref{se:proof}. 

\begin{proposition} 
	\label{pr:man1}
	Let $ x \in \sman \setminus \D \sman $ be a sufficient point with quadruple $ (l,N,\wc,U) $. Then, there exist an invariant measurable set $ \Lambda \subset \bigcup_{k \in \Z} \wt^{k}U $ with $ \mu(\bigcup_{k \in \Z} \wt^{k}U \setminus \Lambda) = 0 $ and two families of $ C^{2} $ submanifolds  $ V^{s} = \{V^{s}_{y}\}_{y \in \Lambda} $ and $ V^{u} = \{V^{u}_{y}\}_{y \in \Lambda} $ such that for every $ y \in \Lambda $, the following hold
	\begin{enumerate}
		\item $ V^{s}_{y} \cap V^{u}_{y} = \{y\} $,
		\item $ V^{s}_{y} $ and $ V^{u}_{y} $ are embedded $ d $-dimensional balls,  
		\item $ \tay V^{s}_{y} \subset \wc'(y) $ and $ \tay V^{u}_{y} \subset \wc(y) $  provided that $ y \in U \cup \wt^{-N}U $,
		\item $ \wt V^{s}_{y} \subset V^{s}_{\wt y} $ and $ \wt^{-1} V^{u}_{y} \subset V^{u}_{\wt^{-1}y} $,
		\item $ d(\wt^{n}y,\wt^{n}z) \to 0 $ exponentially as $ n \to +\infty $ for every $ z \in V^{s}_{y} $, and the same is true as $ n \to -\infty $ for every $ z \in V^{u}_{y} $.
	\end{enumerate}
		Furthermore, $ V^{s}_{y} $ and $ V^{u}_{y} $ vary measurably with $ y \in \Lambda $, and the families $ V^{s} $ and $ V^{u} $ have the absolute continuity property. 
\end{proposition}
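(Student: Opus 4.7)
The plan is to derive non-zero Lyapunov exponents from the cone hypothesis via Wojtkowski's criterion and then feed this into the Katok--Strelcyn theory to obtain the stable and unstable manifolds.

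First I would verify that the cone field $\wc$ is eventually strictly invariant on $U \cup \wt^{-N}U$. The hypothesis $\sic(\dx \wt^{N}) > 3 > 1$ for every $y \in \wt^{-N}U$ forces $D_{y}\wt^{N}\wc(y) \subset \innt \wc(\wt^{N}y)$ by Remark \ref{re:invariance}. To apply Wojtkowski's criterion I would transport $\wc$ along orbits: for $\mu$-a.e.\ $y \in \bigcup_{k\in\Z}\wt^{k}U$, pick $k$ with $\wt^{-k}y \in U$ and set $\wc(y):=D_{\wt^{-k}y}\wt^{k}\wc(\wt^{-k}y)$. Invariance of $\wc$ on its original domain makes this transport unambiguous (modulo the singular set), and the extension remains eventually strictly invariant. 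Wojtkowski's theorem \cite{wo85, liwo95} then yields $d$ positive and $d$ negative Lyapunov exponents at $\mu$-a.e.\ point of $\bigcup_{k\in\Z}\wt^{k}U$.

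Second I would invoke the Katok--Strelcyn theory \cite{ks}, whose standing hypotheses are built into Definition \ref{de:map}. Applied to the invariant set of non-uniform hyperbolicity constructed above, this theory produces a $\wt$-invariant full-measure set $\Lambda \subset \bigcup_{k\in\Z}\wt^{k}U$ and families $V^{s} = \{V^{s}_{y}\}_{y\in\Lambda}$, $V^{u} = \{V^{u}_{y}\}_{y\in\Lambda}$ of local stable and unstable manifolds at each $y\in\Lambda$. Properties (1), (2), (4), (5), the measurable dependence on $y$, and the absolute continuity property are standard outputs of that theory; compare \cite{ks} and \cite{cm}.

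The tangent-space inclusion (3) is the only property requiring a separate argument, since it links the intrinsic Pesin splitting at $y$ to the extrinsic cone $\wc(y)$ defined on $U \cup \wt^{-N}U$. As the statement indicates, its verification is deferred to Proposition \ref{pr:ref2} in Section \ref{se:proof}. The idea is that $V^{u}_{y}$ arises as a limit of forward images $\wt^{n}L$ of Lagrangian discs $L$ through $\wt^{-n}y$ tangent to $\innt\wc$; passing the cone inclusion to the limit plane yields $\tay V^{u}_{y} \subset \wc(y)$. The dual argument, based on the backward strict invariance of $\wc'$ supplied by Proposition 6.2 of \cite{liwo95}, gives $\tay V^{s}_{y} \subset \wc'(y)$. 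The main obstacle is this first step: extending $\wc$ beyond its original domain consistently along orbits while preserving eventual strict invariance $\mu$-a.e. Once this is done, Wojtkowski's criterion and Katok--Strelcyn handle the remainder essentially mechanically.
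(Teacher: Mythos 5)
Your proposal follows essentially the same route as the paper: the paper likewise observes that $\wc$ is eventually strictly invariant, invokes Wojtkowski's criterion to get non-zero Lyapunov exponents a.e.\ on $\bigcup_{k\in\Z}\wt^{k}U$, feeds this into the Katok--Strelcyn theory for properties (1), (2), (4), (5), measurability and absolute continuity, and defers Claim (3) to Proposition \ref{pr:ref2}. Your added details (transporting the cone along orbits and the limit-of-Lagrangian-discs sketch for (3)) are consistent elaborations of what the paper leaves implicit.
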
 

For the definition of the absolute continuity property of a family of submanifolds, we refer the reader to the books \cite{cm,ks}.
		
\begin{definition}		
		The submanifolds forming the families $ V^{s} $ and $ V^{u} $ are called \emph{local stable manifolds} and \emph{local unstable manifolds}, respectively.
\end{definition}

	Let $ x $ be a sufficient point of $ \sman \setminus \D \sman $, and let $ \Lambda $ be the set as in Proposition \ref{pr:man1}. For every $ y \in \Lambda $, denote by $ W^{u}_{y} $ the connected component of $ \bigcup_{k \ge 0} \wt^{k} V^{u}_{\wt^{-k}y} $ containing $ y $. Analogously, denote by $ W^{s}_{y} $ the set obtained by replacing $ \wt $ with $ \wt^{-1} $ and $ V^{u} $ with $ V^{s} $ in the definition of $ W^{u}_{y} $.
	The sets $ W^{s}_{y} $ and $ W^{u}_{y} $ are immersed submanifolds of $ \wm $. 

In the next lemma, we show that Claim (3) of Proposition \ref{pr:man1} remains valid for cone fields jointly invariant with $ \wc $. The lemma is a slight generalization of Lemma 5.3 of \cite{kb}.

\begin{lemma}
	\label{le:tangent}
	Let $ x $ be a sufficient point of $ \wm $ with quadruple $ (l,N,U,\wc) $, and let $ \Lambda $ be the set as in Proposition \ref{pr:man1}. Assume that $ \wp $ is a cone field on an open set $ V $ such that $ \wc $ and $ \wp $ are jointly invariant. Then, for a.e. $ y \in \Lambda $, we have 
	\begin{align*}	
		T_{z} W^{u}_{y} \subset \wp(z) & \qquad \text{for } z \in W^{u}_{y} \cap V, \\
		T_{z} W^{s}_{y} \subset \wp'(z) & \qquad \text{for } z \in W^{s}_{y} \cap V.
	\end{align*}
\end{lemma}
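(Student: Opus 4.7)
The strategy is to transport the tangent-space information guaranteed by Proposition~\ref{pr:man1} at a suitable preimage of $z$ (lying in the domain of $\wc$) over to $z$ itself, using joint invariance of $\wc$ and $\wp$ as the transport mechanism. I will detail the unstable case; the stable case is symmetric.

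First I would apply Poincar\'e recurrence to the open set $U$, which has positive $\mu$-measure: for $\mu$-a.e.\ $y \in \Lambda$, the backward orbit $\{\wt^{-k}y\}_{k\ge 0}$ enters $U$ infinitely often. Fix such a $y$ and $z \in W^u_y \cap V$. By construction of $W^u_y$ and iterated use of item~(4) of Proposition~\ref{pr:man1}, $\wt^{-k}z \in V^u_{\wt^{-k}y}$ for all sufficiently large $k$, and item~(5) gives $d(\wt^{-k}z,\wt^{-k}y) \to 0$ exponentially. Since $U$ is open, I can then pick $k$ so large that both $\wt^{-k}y \in U$ and $\wt^{-k}z \in U$. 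Item~(3) of Proposition~\ref{pr:man1}, combined with the standard coherence of the Pesin unstable family (if $w \in V^u_{y'} \cap \Lambda$, then $V^u_w$ agrees with $V^u_{y'}$ in a neighborhood of $w$), then yields
\[
T_{\wt^{-k}z} V^u_{\wt^{-k}y} \subset \wc(\wt^{-k}z).
\]
Now $\wt^{-k}z \in U \cup \wt^{-N}U$ is in the domain of $\wc$ and $\wt^k(\wt^{-k}z) = z \in V$ is in the domain of $\wp$, so the joint invariance of $\wc$ and $\wp$ applied with $k>0$ gives $D_{\wt^{-k}z}\wt^k\,\wc(\wt^{-k}z) \subset \wp(z)$. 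Combining the two inclusions and using $T_zW^u_y = D_{\wt^{-k}z}\wt^k\,T_{\wt^{-k}z} V^u_{\wt^{-k}y}$, I conclude $T_zW^u_y \subset \wp(z)$. The stable case runs along the forward orbit: I would choose $k \ge 0$ with $\wt^k y, \wt^k z \in U$, use item~(3) for $V^s$ to place $T_{\wt^k z}V^s_{\wt^k y}$ into $\wc'(\wt^k z)$, and then invoke the backward-iterate formulation of joint invariance for the complementary cone fields (the $(\wc',\wp')$-analogue of the reformulation of invariance given in Proposition~6.2 of \cite{liwo95}) to push back via $\wt^{-k}$ and land in $\wp'(z)$.

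The main obstacle I anticipate is the simultaneous recurrence of $\wt^{-k}y$ and $\wt^{-k}z$ to $U$. Poincar\'e recurrence alone only controls the orbit of $y$; the exponential contraction along $V^u$ from item~(5) is essential to force $\wt^{-k}z$ into $U$ whenever $\wt^{-k}y$ goes deep enough inside. A secondary technical point is the coherence of the unstable manifold family at non-center basepoints, invoked in order to apply item~(3) at $\wt^{-k}z$ rather than at $\wt^{-k}y$; this is standard in the Pesin--Katok--Strelcyn theory \cite{ks} but merits explicit mention in a complete proof.
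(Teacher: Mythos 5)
There is a genuine gap at the key step where you pass from item (3) of Proposition \ref{pr:man1} to the inclusion $T_{\wt^{-k}z}V^{u}_{\wt^{-k}y}\subset\wc(\wt^{-k}z)$. Item (3) gives the cone inclusion only at the \emph{center} of a local unstable manifold, i.e.\ at points of $\Lambda$, and the ``coherence'' you invoke likewise presupposes that the point where you want to apply it lies in $\Lambda$ (otherwise $V^{u}_{\wt^{-k}z}$ is not even defined). But the lemma must hold for \emph{every} $z\in W^{u}_{y}\cap V$, and a generic point of $W^{u}_{y}$ — a $d$-dimensional submanifold of a $2d$-dimensional space, hence of zero $\mu$-measure — has no reason to belong to $\Lambda$, nor do its backward iterates. (This ``for all $z$'' is essential: in the Tail Bound the lemma is applied to curves reaching $\D W^{u}_{y}$.) So your argument only proves the inclusion at the $\Lambda$-points of the leaf, which is strictly weaker than the statement.

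The paper closes exactly this gap with a Luzin-plus-continuity argument that your proposal does not contain. One first extracts closed sets $\Lambda_{\ep}\subset\Lambda$ on which $w\mapsto V^{u}_{w}$ is \emph{uniformly continuous in the $C^{1}$-topology}, then applies Poincar\'e recurrence so that the backward orbit of (a full-measure set of) base points returns to $\Lambda_{\ep}\cap U$ arbitrarily close to the starting point $w$. The cone inclusion of item (3), valid at the nearby regular center $w'=\wt^{n_{i}}w\in\Lambda_{\ep}$, is then propagated to the nearby non-regular point $w''=\wt^{n_{i}-k}z$ lying on $V^{u}_{w'}$ by combining the $C^{1}$-continuity of the family $V^{u}$ on $\Lambda_{\ep}$ with the continuity of $\wc$ on $U$; only after that does joint invariance transport the inclusion to $z$, as in your final step. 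Note also that this mechanism needs the return to land within a specific distance $\delta$ of $w$ (not merely inside the open set $U$), which is a second point your sketch glosses over: plain recurrence to $U$ does not by itself force the returns ``deep enough inside'' $U$ for your choice of $k$. Your overall transport-by-joint-invariance scheme is the right one, but without the Luzin/continuity device the crucial inclusion at $\wt^{-k}z$ is unjustified.
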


\begin{proof}
	We will prove only the unstable part of the lemma, because the stable one can be proved in the same way. 
	
	Luzin's Theorem implies that for every $ \ep>0 $, there is a closed set $ \Lambda_{\ep} \subset \Lambda $ with $ \mu(\Lambda \setminus \Lambda_{\ep})<\ep $ such that the maps $ w \mapsto V^{s}_{w} $ and $ w \mapsto V^{u}_{w} $ are uniformly continuous in the $ C^{1} $-topology on $ \Lambda_{\ep} $ (see \cite[Statement 7.1.3 of Theorem 7.1, Part I]{ks}). It is clear that $ \bigcup_{\ep>0} \Lambda_{\ep} = \Lambda $ (mod 0). For every $ \ep>0 $, we define a measurable set $ U_{\ep} \subset U $ as follows: if $ \mu(\Lambda_{\ep} \cap U) = 0 $, then set $ U_{\ep} = \emptyset $, otherwise, we use the Poincar\'e Recurrence Theorem to find a measurable set $ U_{\ep} = \Lambda_{\ep} \cap U $ (mod 0) such that for every $ w \in U_{\ep} $, there are two monotone sequences $ m_{i} \to +\infty $ and $ n_{i} \to -\infty $ as $ i \to +\infty $ for which $ \wt^{n_{i}(m_{i})}w \in U_{\ep} $ and $ d(\wt^{n_{i}(m_{i})}w,w)<1/i $ for all $ i>0 $. Since $ \bigcup_{\ep>0} U_{\ep} = U $ (mod 0), it follows that $ \bigcup_{k \in \Z} \bigcup_{\ep>0}  \wt^{k} U_{\ep} = \bigcup_{k \in \Z} \wt^{k} U = \Lambda $ (mod 0). 
	
	Now, let $ y \in \bigcup_{k \in \Z} \bigcup_{\ep>0} \wt^{k} U_{\ep} $. It follows that there exist $ \bar{\ep}>0 $, $ k>0 $ and $ w \in U_{\bar{\ep}} \neq \emptyset $ such that $ \wt^{k}w = y $. Let $ \{n_{i}\}_{i \in \N} $ be a sequence for $ w $ as described above. The continuity of $ V^{u} $ on $ \Lambda_{\bar{\ep}} $, the continuity of $ \wc $ on the open set $ U $ and Statement (3) of Proposition \ref{pr:man1} easily imply that there exists $ \delta>0 $ such that if $ w' \in U_{\bar{\ep}} $ with $ d(w',w)<\delta $ and $ w'' \in V^{u}_{w'} $ with $ d(w'',w)< 2\delta $, then $ T_{w''} V^{u}_{w'} \subset \wc(w'') $. Let $ z \in W^{u}_{y} \cap V $. From the definition of $ W^{u} $ and the properties of the sequence $ \{n_{i}\}_{i \in \N} $, we can find $ i>0 $ such that $ d(\wt^{n_{i}-k}y,\wt^{n_{i}-k}z) < \delta $ and $ d(\wt^{n_{i}}w,w) < \delta $. By previous observation, it follows that $ T_{\wt^{n_{i}-k}z} \wt^{n_{i}-k} W^{u}_{y} \subset \wc(\wt^{n_{i}-k}z) $. Since $ \wc $ and $ \wp $ are jointly invariant, we finally obtain 
		\begin{align*}
		\taz W^{u}_{y} & = D_{\wt^{n_{i}-k} z} \wt^{-n_{i}+k} \left(T_{\wt^{n_{i}-k} z} W^{u}_{\wt^{n_{i}-k} y}\right) \\
		& \subset D_{\wt^{n_{i}-k} z} \wt^{-n_{i}+k} \wc(\wt^{n_{i}-k} z) \subset \wp(z).
		\end{align*}	
\end{proof}

We now give the definition of an \emph{essential} point. Essential points appear in the formulation of Condition L3 of our LET (Theorem \ref{th:LET}). These points play the same role as the points with strictly unbounded derivatives in the Sinai-Chernov Ansatz of Liverani and Wojtkowski (see Condition F in Section 7 of \cite{liwo95}).

\begin{definition}
	\label{de:ess} A point $ x \in \sman \setminus \D \sman $ is called \emph{u-essential} if for every $ \alpha>0 $, there exist
	\begin{enumerate}[(i)]
		\item a neighborhood $ U $ of $ x $ and an integer $ n>0 $ such that $ U \cap \wrp_{n} = \emptyset $, 
		\item an invariant continuous cone field $ \wc $ on $ U \cup \wt^{n} U $ such that $ \sics(D_{y} \wt^{n})>\alpha $ for every $ y \in U $.
	\end{enumerate}
	Analogously, we define an \emph{s-essential point} by replacing in the definition above $ \wt $ and $ \wrp_{n} $ with $ \wt^{-1} $ and $ \wrn_{n} $, respectively. 
\end{definition}

\begin{definition}
	\label{de:familyofcones}
From the definition of an essential point, we obtain a whole family of invariant continuous cone fields -- in fact, one for each value of $ \alpha $ -- associated to an essential point $ y \in \sman $. We call such a family the \emph{family of invariant cone fields associated to} $ y $. 
\end{definition}

We recall that if $ E $ is an ergodic component of positive measure and with non-zero Lyapunov exponents, then by Theorem 13.1 of \cite[Part II]{ks}, there exist $ m>0 $ disjoint measurable sets $ B_{1},\ldots,B_{m}=B_{0} $ of $ \sman $ such that 
\begin{enumerate}
	\item $ E = \bigcup^{m-1}_{i=0} B_{i} $, 
	\item $ \wt B_{i} = B_{i+1} $ for each $ i=0,\ldots,m-1 $,
	\item the restriction $ \wt^{m}|_{B_{i}} $ is a K-automorphism for each $ i=0,\ldots,m-1 $. 
\end{enumerate}
	The map $ \wt^{m}|_{B_{i}} $ is in fact Bernoulli \cite{ch,ow}. The sets $ B_{1},\ldots,B_{m} $ are uniquely defined up to a set of zero measure, and are called \emph{Bernoulli components} of $ \wt $.

\section{Local Ergodic Theorem}
\label{se:statementlet}

We now formulate our LET, but postpone its proof to Section \ref{se:proof}. In this section, we also formulate and prove Corollary \ref{co:LET}, which represents a useful criterion for the ergodicity of the map $ \wt $. We end the section by commenting on the hypotheses of the LET.

\begin{theorem}[LET]   
	\label{th:LET}
	Let $ x \in \sman \setminus \D \sman $ be a sufficient point with quadruple $ (l,N,U,\wc) $. Furthermore, let $ \Lambda $ be the subset of $ \, \bigcup_{k \in \Z} \wt^{k} U $ as in Proposition \ref{pr:man1}, and suppose that Conditions L1-L4 below are satisfied. 
\begin{description}
\item[L1 (Regularity)] The sets $ \wrp_{k} $ and $ \wrn_{k} $ are regular for every $ k > 0 $. 
\item[L2 (Alignment)] For every $ k>0 $, we have
	\begin{itemize}
	\item if $ \Sigma $ is a component of $ \wrn_{k} $ and $ y \in \Sigma \cap \wt^{-N} U $, then
	\[ 
	\ch(\tay \Sigma) \subset \wc(y),
	\]
	\item if $ \Sigma $ is a component of $ \wrp_{k} $ and $ y \in \Sigma \cap U $, then
	\[ 
	\ch(\tay \Sigma) \subset \wc'(y).
	\]
	\end{itemize}
\item[L3 (Sinai-Chernov Ansatz)]
	The set of all u(s)-essential points of $ \wsn_{1}(\wsp_{1}) $ has full $ \Leb_{-}(\Leb_{+}) $-measure, and if $ y $ is one of such points, then $ \wc $ and each cone field of the family of the invariant cone fields associated to $ y $ (see Definition \ref{de:familyofcones}) are jointly invariant. 
\item[L4 (Contraction)]
	There exist $ \beta>0 $ and $ \xi>0 $ such that 
	\begin{itemize}
		\item if $ y \in \Lambda \cap U $, $ z \in W^{u}_{y} $ and $ \wt^{-k}z \in \wsn_{1}(\xi) $ with $ k>0 $, then
	\[
	\left\|D_{z} \wt^{-k}|_{T_{z} W^{u}_{y}} \right\| \le \beta, 
	\]
	\item if $ y \in \Lambda \cap U $, $ z \in W^{s}_{y} $ and $ \wt^{k}z \in \wsp_{1}(\xi) $ with $ k>0 $, then
	\[
	\left\|D_{z} \wt^{k}|_{T_{z} W^{s}_{y}} \right\| \le \beta. 
	\]
	\end{itemize}
\end{description}
Then, there exists a neighborhood $ \mathcal{O} $ of $ x $ contained (mod 0) in a Bernoulli ergodic component of $ \wt $. 
\end{theorem}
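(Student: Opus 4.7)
The plan is to follow the strategy of \cite{liwo95}, adapted to the setting where the invariant cone field $ \wc $ is defined only on $ U \cup \wt^{-N}U $ rather than globally. By Remark~\ref{re:everysufficient}, every point of $ U $ is already sufficient with quadruple $ (0,N,U,\wc) $, so it suffices to find a neighborhood $ \mathcal{O} \subset U $ of $ \wt^{l}x $ contained mod $ 0 $ in one Bernoulli component; pulling back by $ \wt^{l} $ then handles $ x $ itself.

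The engine is a Hopf chain construction inside $ \mathcal{O} $. Proposition~\ref{pr:man1} provides, on a full-measure invariant set $ \Lambda \subset \bigcup_{k} \wt^{k}U $, the measurable families $ V^{s}_{y},V^{u}_{y} $ of local stable and unstable manifolds with the absolute continuity property, and these extend by iteration to the global leaves $ W^{s}_{y},W^{u}_{y} $. By the classical Hopf argument, two density points of $ \mathcal{O} $ lie in the same ergodic component as soon as they can be connected by a finite alternating chain of long enough local stable and unstable manifolds, since absolute continuity of the holonomies transfers equality of Birkhoff averages across each leg.

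The decisive --- and most delicate --- step is the improved Tail Bound, Proposition~\ref{pr:tb}, which asserts that the $ \mu $-measure of points $ y \in U $ whose unstable (resp.\ stable) manifold has diameter less than $ \delta $ is $ O(\delta) $. It is built inductively by estimating at each backward step the mass of orbits landing in the $ \epsilon $-neighborhood of a component of $ \wsn_{1} $. Condition L1 provides the codimension-one structure and the volume bound of Proposition~\ref{pr:vol}; Condition L2, via the characteristic line condition, guarantees that singularity components cut unstable leaves transversally so that each cut removes only a codimension-one slice; Condition L3, together with Lemma~\ref{le:tangent}, furnishes enough expansion in a neighborhood of $ \wsn_{1} $ by propagating the essential-point cone fields jointly with $ \wc $; and Condition L4 uniformly bounds $ \|D_{z}\wt^{-k}|_{T_{z} W^{u}_{y}}\| $ whenever $ \wt^{-k}z $ enters $ \wsn_{1}(\xi) $, preventing the geometric distortion from exploding. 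The novelty with respect to \cite{liwo95} is precisely this local patching of $ \wc $ with the essential-point cone fields, in place of a single globally defined cone field. This is the main obstacle.

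Once the Tail Bound is available, the rest is classical. For small $ \delta>0 $, define $ G_{\delta} \subset \mathcal{O} $ as the set of points with local stable and unstable manifolds of diameter at least $ \delta $ and mutually transverse tangents (quantitative transversality coming from $ \sic(D\wt^{N})>3 $ via Remark~\ref{re:invariance}). The Tail Bound gives $ \mu(\mathcal{O} \setminus G_{\delta}) \to 0 $ as $ \delta \to 0^{+} $, and a standard transversality-and-volume argument combined with absolute continuity shows that almost any two density points of $ G_{\delta} $ are joined by a short Hopf chain, so $ \mathcal{O} $ is mod $ 0 $ in one ergodic component $ E $. Applying the same argument to the iterates $ \wt^{n} $, which share the cone field $ \wc $, eliminates the cyclic period of $ E $ and yields the K-property on $ \mathcal{O} $; the Bernoulli conclusion then follows from \cite{ch,ow}.
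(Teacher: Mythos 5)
Your overall architecture coincides with the paper's: reduce to a neighborhood of $\wt^{l}x$, run the Liverani--Wojtkowski covering and Hopf-chain machinery, isolate the Tail Bound as the one step that must be redone because $\wc$ lives only on $U\cup\wt^{-N}U$, and upgrade ``ergodic'' to ``Bernoulli'' by repeating the argument for the powers of $\wt$. The roles you assign to L1--L4 are also the correct ones. There are, however, two concrete gaps. The first is quantitative: you state the Tail Bound as ``the measure of points whose unstable manifold has diameter less than $\delta$ is $O(\delta)$'' and later use only ``$\mu(\mathcal{O}\setminus G_{\delta})\to 0$''. Neither suffices. The covering argument works with a lattice of roughly $\delta^{-2d}$ rectangles of size $\delta$, and the combinatorial step (Proposition 11.4 of \cite{liwo95}) needs the union of non-connecting rectangles to have measure $o(\delta)$, equivalently their number to be $o(\delta^{1-2d})$: a bad set of measure $C\delta$ can form a codimension-one wall of rectangles that disconnects the good set, and then no finite alternating chain of connecting manifolds crosses it. The correct statement is a splitting (Propositions \ref{pr:sinai1} and \ref{pr:tb}): rectangles mostly cut by $\bigcup_{i=N}^{M}\wt^{i}\wsn_{1}$ contribute $o(\delta)$ for each fixed $M$ (this uses L1, L2 and Proposition \ref{pr:vol}, and it is here --- not in transversality of tangent spaces --- that $\sic(D\wt^{N})>3$ is used), while for every $\ep$ there is $M_{\ep}$ so that the rectangles cut only by $\bigcup_{i>M_{\ep}}\wt^{i}\wsn_{1}$ contribute at most $\ep\delta$.

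The second gap is that the $\ep\delta$ estimate --- which you correctly identify as the main obstacle and the sole point where this theorem differs from \cite{liwo95} --- is described but not proved. The argument is not an induction over backward steps; one shows that if $W^{u}_{y}$ has size $<\delta$ and is cut by $\wt^{m}\wsn_{1}$ with $m>M$, then $\wt^{-m}y$ lies in a very thin neighborhood of $\wsn_{1}$. This requires: (i) using L3 and a compactness/patching argument (Lemma \ref{le:tb2}) to build, for given $t$ and $h$, a single cone field jointly invariant with $\wc$ on an $r$-neighborhood of a compact $\ws_{t,h}\subset\wsn_{1}$ with $\Leb_{-}(\wsn_{1}\setminus\ws_{t,h})<h$ and expansion $>t$ after $M_{t,h}$ iterates; (ii) Lemma \ref{le:tangent} to place the unstable tangent spaces inside that cone field; (iii) L4 to handle the exceptional piece of $\Leb_{-}$-measure $<h$; (iv) the disjointness of the sets $\wt^{-m}Y^{k}_{m}$ (Lemma \ref{le:secp}) together with Proposition \ref{pr:vol} to sum over $m$ and the number of returns $k$, and finally a choice of $t$ large and $h$ small. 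Without this construction the proposal does not yet establish the new Tail Bound, and hence not the theorem.
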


\begin{remark}
	We have already observed that if the point $ x $ is sufficient, then every point of the neighborhood $ U $ is also sufficient (see Remark \ref{re:everysufficient}). Since Conditions L1-L4 depend only on $ N,U,\wc $ and not on $ x $, we see that if the LET applies to $ x $, then it does to every point of $ U $.
\end{remark}

The next corollary is a straightforward consequence of the LET. It is the analog for symplectomorphisms with singularities of Theorem 4.1 of \cite{kb}.

\begin{corollary}
	\label{co:LET}
	Under the same hypotheses of Theorem \ref{th:LET}, we have
	\begin{enumerate}
		\item each Bernoulli component of $ \wt $ contained in $ \bigcup_{k \in \Z} \wt^{k}U $ is open (mod 0);
		\item each connected component of $ \, \bigcup_{k \in \Z} \wt^{k}U $ is contained (mod 0) in a Bernoulli component of $ \wt $. In particular, if $ \, \bigcup_{k \in \Z} \wt^{k}U $ is connected, then $ \, \bigcup_{k \in \Z} \wt^{k}U $ coincides (mod 0) with a Bernoulli component of $ \wt $. 
	\end{enumerate}
\end{corollary}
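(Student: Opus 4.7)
The plan is to derive both parts from the LET applied at each point of $U$. By Remark \ref{re:everysufficient}, every $w \in U$ is sufficient with quadruple $(0,N,U,\wc)$, so the LET supplies an open neighborhood $\mathcal{O}_w \ni w$ contained (mod 0) in some Bernoulli component, which I denote $B_w$. The key ingredient, underlying both claims, is the following construction: for every $z \in W := \bigcup_{k\in\Z} \wt^k U$ I exhibit an open neighborhood of $z$ in $W$ contained (mod 0) in a single Bernoulli component. Since $z \in \wt^{k_0} U$ for some $k_0 \in \Z$, the convention on $\wt^{k_0}A$ gives $w \in U$ at which $\wt^{k_0}$ is a local $C^{2}$ diffeomorphism and $\wt^{k_0} w = z$; after shrinking $\mathcal{O}_w$ to lie in $U$, the set
\[
V_z := \wt^{k_0}\bigl(\mathcal{O}_w \setminus \wrp_{|k_0|}\bigr)
\]
(with $\wrn_{|k_0|}$ replacing $\wrp_{|k_0|}$ when $k_0 < 0$) is open, contains $z$, and, since $\wt$ cyclically permutes Bernoulli components and the singular set has $\mu$-measure zero, is contained (mod 0) in $\wt^{k_0} B_w$.

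For part (1), let $B \subset W$ be a Bernoulli component. For almost every $z \in B$ the construction gives $V_z$, and $\wt^{k_0} B_w = B$ (mod 0) is forced because $w \in \wt^{-k_0} B$ (mod 0) and distinct Bernoulli components are disjoint. Hence $\bigcup_z V_z$, which reduces to a countable union by Lindel\"of, is an open set equal to $B$ (mod 0); so $B$ is open (mod 0).

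For part (2), let $C$ be a connected component of $W$ and, for each Bernoulli component $B$ of $\wt$, set
\[
\mathcal{V}_B := \bigcup\bigl\{V \subset C : V \text{ is open and } V \subset B \text{ (mod 0)}\bigr\}.
\]
By Lindel\"of the effective union is countable, so $\mathcal{V}_B \subset B$ (mod 0); moreover, if $B \neq B'$ then $\mathcal{V}_B \cap \mathcal{V}_{B'}$ is open and contained (mod 0) in $B \cap B' = \emptyset$, hence empty. The crucial observation is that $\bigcup_B \mathcal{V}_B = C$, since every $z \in C$ lies in the $V_z$ of the previous paragraph (intersected with $C$ if needed), which is a subset of $\mathcal{V}_{\wt^{k_0} B_w}$. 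Therefore $C$ is a disjoint union of the open sets $\{\mathcal{V}_B\}$, and connectedness forces exactly one of them to equal $C$, giving $C \subset B$ (mod 0); the ``in particular" clause follows when $W$ is itself connected.

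The main technical point is that $V_z$ must exist for \emph{every} $z \in C$, not merely for almost every $z$: if such neighborhoods existed only on a full-measure subset, a codimension-one piece of singular set could in principle disconnect $C$ after removal of the non-sufficient points, and the connectedness step in part (2) would fail. Pushing the LET neighborhood at $w \in U$ forward by the local diffeomorphism $\wt^{k_0}$ handles this uniformly and is the one nontrivial step in the corollary.
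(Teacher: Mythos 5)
Your proof is correct and follows essentially the same route as the paper: apply the LET at every point of $\bigcup_{k\in\Z}\wt^{k}U$ (each such point being sufficient via Remark \ref{re:everysufficient} and the local diffeomorphism $\wt^{k_0}$), then use a Lindel\"of countable subcover for Part (1) and connectedness for Part (2). Your partition of $C$ into the disjoint open sets $\mathcal{V}_B$ merely makes explicit the step the paper dismisses as ``straightforward'' (that the connected set $\mathcal{O}_C$ lies mod $0$ in a single Bernoulli component), which is a welcome clarification but not a different argument.
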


\begin{proof}
	Let $ B $ be a Bernoulli component of $ \wt $ contained in the set $ \bigcup_{k \in \Z} \wt^{k}U $. If $ y \in B $, then there is an integer $ k $ such that $ \wt^{k}y \in U $. Thus, every $ y \in B $ is sufficient, and satisfies Conditions L1-L4. By using the LET, we can then conclude that every $ y \in B $ has a neighborhood $ \mathcal{O}_{y} $ contained (mod 0) in $ B $. It follows that $ B \subset \bigcup_{y \in B} \mathcal{O}_{y} $, and since $ \sman $ is a Lindel\"of space, we can extract a countable subcover of $ B $ from $ \{\mathcal{O}_{y}\}_{y \in B} $. Thus, we see that $ B $ is open (mod 0), and Part (1) of the corollary is proved.
	
	Let $ C $ be a connected component of $ \, \bigcup_{k \in \Z} \wt^{k}U $. Similarly as above, we can show that every point $ y \in C $ has a neighborhood $ \mathcal{O}_{y} $ contained (mod 0) in a Bernoulli component of $ \wt $. It is straightforward to see that the set 
	\[ \mathcal{O}_{C}=\bigcup_{y \in C} \mathcal{O}_{y} \]
	 is connected. Therefore, $ \mathcal{O}_{C} $ has to be contained (mod 0) in a single Bernoulli component of $ \wt $. The same is true for $ C $, because $ C \subset \mathcal{O}_{C} $. This proves Part (2) of the corollary, and completes the proof. 
\end{proof}

Some comments on the hypotheses of the LET are in orders. In the following, we will mainly try to elucidate the difference between the hypotheses of our LET and those of the LET of Liverani and Wojtkowski.

\subsubsection*{{\bf Cone field:}}  
The invariant cone field $ \wc $ in our definition of a sufficient point is defined on $ U \cup \wt^{-N} U $ with $ U $ being an open subset of $ \sman \setminus \D \sman $. Instead, Liverani and Wojtkowski assume that $ \wc $ is defined on the entire set $ \sman \setminus \D \sman $. We show in the next section, that our weaker condition on $ \wc $ suffices to prove the LET. The drawback is that the other hypotheses of our LET turn out to be more involved than those of the LET of \cite{liwo95}.


		
\subsubsection*{{\bf L1:}} Condition L1 is identical to the condition called Regularity in \cite[Section 7]{liwo95}).
		
\subsubsection*{{\bf L2 and L4:}} 
Condition L2 corresponds to the condition called Proper Alignment in \cite[Section 7]{liwo95}. It is not difficult to see that the Proper Alignment implies L2. Liverani and Wojtkowski use L2 in the proof of their LET, and not the full Proper Alignment (see the proof of Proposition 12.2 of \cite{liwo95}). A similar remark can be made for our L4 and the Noncontraction condition of \cite[Section 7]{liwo95}. There is an important difference between L2 and the Proper Alignment. The Proper Alignment requires the characteristic lines of the tangent spaces of the singular sets to be contained in the interior of the cones, whereas in L2 these characteristic lines have just to be in the cones. The strict inclusion assumed by the Proper Alignment is used in the original proof of the Tail Bound \cite[Section 13]{liwo95}, but not in our proof (see Proposition \ref{pr:tb}).

		
\subsubsection*{{\bf L3:}} 

The difference between Condition L3 and the Sinai-Chernov Ansatz of \cite[Section 7]{liwo95} is due to the fact that we do not assume the cone field $ \wc $ to be defined on the singular sets $ \wsp_{1} $ and $ \wsn_{1} $.
It is precisely to remedy to this situation that we have to introduce the concepts of essential points and joint invariant cone fields in the formulation of L3.

\section{Proof of the local ergodic theorem}  
\label{se:proof}

In this section, we prove the LET. The proof presented here follows closely the one of the Main Theorem (Discontinuous case) of \cite[Section 7]{liwo95}. In fact, except for the Tail Bound \cite[Section 13]{liwo95}, a large portion of the proof of Liverani and Wojtkowski retains its validity under Conditions L1-L4 without significant modification. Since the proof of the LET is quite long, we split it into several parts. Each part forms a subsection of the current section. We do not repeat the proofs of \cite{liwo95} which remain valid in our setting. Instead, we refer the reader to the original results, and limit ourselves to explain why these results extend to our setting. The reader should be cautioned that our notation does not always match the one of \cite{liwo95}. As already mentioned, the part of proof of Liverani and Wojtkowski that does not hold in our setting is the so-called Tail Bound. Our new proof of the Tail Bound (Proposition \ref{pr:tb}) is laid out in Subsection \ref{su:tb}.

\subsection{Reference neighborhood}  
\label{su:refn}

Let $ x \in \sman \setminus \D \sman $ be a sufficient point with quadruple $ (l,N,U,\wc) $. The first step of the proof of Theorem \ref{th:LET} consists in constructing a neighborhood $ \wu_{\rho} $ of the point $ \wt^{l}x $ endowed with a cone field $ \wy_{\rho} $ such that after every sufficiently long return to $ \wu_{\rho} $, vectors contained in $ \wy_{\rho} $ are expanded uniformly. The precise results are Propositions \ref{pr:ref1} and \ref{pr:ref2} below. The proof of the first proposition can be derived in a straightforward manner from the considerations at pages 41 and 42 of Section 8 of \cite{liwo95}, whereas the proof of the second proposition is exactly as that of Proposition 8.4 of \cite{liwo95}. These proofs are still valid in our setting, because they rely only on the sufficiency of the point $ x $. We observe that the definition of sufficiency is not explicitly given in \cite{liwo95}: it is just assumed that there exists an eventually strictly invariant continuous cone field $ \wc $ on $ \sman \setminus \D \sman $ and $ \sic(\dx \wt^{N})>3 $ for some $ x \in \sman \setminus \D \sman $.
                                                                
Recall that the dimension of the manifold $ \sman $ is $ 2d $. Denote by $ | \cdot | $ and $ \omega_{0} $ the Euclidean norm on $ \R^{d} $ and the standard symplectic form on $ \R^{d} \times \R^{d} $, respectively. For every $ a>0 $, let $ \wv_{a} $ be the $ d $-dimensional cube of size $ a $, i.e.,
\[ 
\wv_{a} = \left\{y=(y^{1},\ldots,y^{d}) \in \R^{d}:|y^{i}| < a \text{ for } i=1,\ldots,d \right\},
\]
and let
\[ 
\ww_{a} = \wv_{a} \times \wv_{a}.
\]

For each $ i=1,2 $, let $ \pi_{i}:\R^{d} \times \R^{d} \to \R^{d} $ be the projection given by $ \pi_{i}(u) = u_{i} $ for every $ u = (u_{1},u_{2}) \in \R^{d} \times \R^{d} $. Given a set $ A \subset \R^{d} \times \R^{d} $, we refer informally to the sets $ \pi_{1}(A) $ and $ \pi_{2}(A) $ as the projections of $ A $ onto the first and the second component, respectively, of $ \R^{d} \times \R^{d} $.

For every $ \rho>0 $, let the cone $ \wy_{\rho} \subset \R^{d} \times \R^{d} $ together with its complementary cone $ \wy'_{\rho} $ be given by
\begin{gather*}
\wy_{\rho} = \{u \in \R^{d} \times \R^{d}: |\pi_{2}(u)| \le \rho |\pi_{1}(u)| \}, \\
\wy^{'}_{\rho} = \{u \in \R^{d} \times \R^{d}: |\pi_{2}(u)| \ge \rho |\pi_{1}(u)| \}.
\end{gather*}

Let $ x $ be a sufficient point of $ \wm $ with quadruple $ (l,N,U,\wc) $, and let 
\[ 
\w{\rho} = \frac{1}{\sic(D_{\wt^{l-N}x} \wt^{N})}.
\] 
By definition of sufficiency, it follows that $ 0 < \w{\rho} < 1/3 $.
 
\begin{proposition}
	\label{pr:ref1} 
	For every $ \w{\rho}<\rho<1 $, there exist $ a_{\rho}>0 $ and a chart $ (\wu_{\rho},\Phi_{\rho}) $ with $ \wt^{l}x \in \wu_{\rho} \subset U $ such that
	\begin{enumerate}
		\item $ \Phi_{\rho}: \wu_{\rho} \to \ww_{a_{\rho}} $ is a diffeomorphism and $ \, \Phi^{*}_{\rho} \, \omega_{0} = \omega $, 
		\item $ \dy \Phi_{\rho} (\wc'(y)) \subset \wy^{'}_{1/\rho} $ for every $ y \in \wu_{\rho} $,
		\item $ \dy \Phi_{\rho} (\wt^{N} \wc(y)) \subset \wy_{\rho} $ for every $ y \in \wt^{-N} \wu_{\rho} $.
	\end{enumerate}
\end{proposition}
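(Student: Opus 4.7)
The plan is to construct $\Phi_{\rho}$ in three stages: first, build a symplectic linear model at $p := \wt^{l}x$ that realizes the desired cone inclusions at the base point; second, extend this model to a local symplectic chart via a version of Darboux's theorem that prescribes the derivative at a point; third, shrink the domain using continuity of the cone fields involved.

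For the linear model, consider the Lagrangian subspaces $\w{A} := D_{\wt^{-N}p}\wt^{N}A(\wt^{-N}p)$ and $B(p)$ of $T_{p}\sman$, where $A,B$ are the transverse Lagrangian families generating $\wc$. Transversality of $\w{A}$ and $B(p)$ follows from strict invariance: since $\sic(D_{\wt^{-N}p}\wt^{N}) = 1/\w{\rho} > 3$, we have $\w{A}\setminus\{0\} \subset \innt\wc(p)$, and hence $\w{A}\cap B(p) \subset \innt\wc(p)\cap\wc'(p) = \{0\}$. By the standard symplectic normalization of a transverse Lagrangian pair, there is a linear symplectomorphism $L : (T_{p}\sman,\omega_{p}) \to (\R^{2d},\omega_{0})$ with $L(\w{A}) = \R^{d}\times\{0\}$ and $L(B(p)) = \{0\}\times\R^{d}$. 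In these coordinates, $A(p)$ becomes the graph of some symmetric matrix $f$ and $\w{B} := D_{\wt^{-N}p}\wt^{N}B(\wt^{-N}p)$ becomes the reverse graph of some symmetric matrix $g$; the cones $L(\wc'(p))$ and $L(D\wt^{N}\wc(\wt^{-N}p))$ then admit explicit quadratic descriptions in terms of $f$ and $g$. The bound $\sic = 1/\w{\rho}$, combined with a diagonal symplectic rescaling $(v,w) \mapsto (\lambda v,\lambda^{-1}w)$, can be used to arrange that the smallest singular values of $f$ and $g$ are simultaneously at least $1/\w{\rho}$, which is exactly what forces $L(\wc'(p)) \subset \wy'_{1/\w{\rho}}$ and $L(D\wt^{N}\wc(\wt^{-N}p)) \subset \wy_{\w{\rho}}$.

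Next, apply Darboux's theorem in the form that prescribes the linear part at a single point: this yields a $C^{2}$ diffeomorphism $\Phi$ from a neighborhood $\wu \subset U$ of $p$ into $\R^{2d}$ with $\Phi(p) = 0$, $D_{p}\Phi = L$, and $\Phi^{*}\omega_{0} = \omega$; shrinking $\wu$ so that its image contains a box $\ww_{a}$ establishes (1). Now fix $\rho \in (\w{\rho},1)$. The cone field $\wc$ is continuous on $U$ by the definition of a sufficient point; the map $\wt^{N}$ is $C^{2}$ on a neighborhood of $\wt^{-N}\wu$ thanks to $U\cap\wrn_{N} = \emptyset$; and $D\Phi$ is continuous. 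Consequently the cone-valued maps $y \mapsto \wc'(y)$ and $y \mapsto D_{\wt^{-N}y}\wt^{N}\wc(\wt^{-N}y)$, together with $y \mapsto D_{y}\Phi$, are all continuous on $\wu$. Since $\wy_{\w{\rho}} \subset \innt\wy_{\rho}$ and $\wy'_{1/\w{\rho}} \subset \innt\wy'_{1/\rho}$ whenever $\rho > \w{\rho}$, the strict inclusions from the linear step persist on a small neighborhood of $p$; choose $a_{\rho} > 0$ so that $\wu_{\rho} := \Phi^{-1}(\ww_{a_{\rho}})$ satisfies (2) and (3).

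The main obstacle is Step 1: converting the $\wq$-based expansion bound $\sic > 1/\w{\rho}$ into simultaneous singular-value lower bounds for the matrices $f$ and $g$ appearing in the Darboux presentation. This requires a careful piece of symplectic linear algebra — essentially the content of Section~8 of \cite{liwo95} — reconciling magnitudes measured by $\wq$ with the ambient Euclidean magnitudes that define the slope cones $\wy_{\rho}$ and $\wy'_{\rho}$. Once the linear model is in hand, the Darboux extension and the continuity-based shrinking are routine.
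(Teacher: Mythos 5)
Your overall architecture (linear symplectic model at $p=\wt^{l}x$, Darboux chart with prescribed derivative at $p$, continuity to shrink the domain) is the right one, and it matches what the paper does by deferring to pages 41--42 of Section 8 of \cite{liwo95}. Your reduction of the two cone inclusions to definiteness-plus-eigenvalue bounds on the symmetric matrices $f$ and $g$ is also sound in every dimension: in your coordinates one computes $\wq_{p}(v,w)=\langle v,w\rangle-\langle v,fv\rangle$, whence $\wc'(p)\subset\wy'_{1/\rho}$ exactly when $-f\ge \rho^{-1}\id$, and similarly for the image cone and $g$.

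The gap is the normalization you commit to in Step 1. Sending $\w{A}=D\wt^{N}A(\wt^{-N}p)$ to the horizontal plane and $B(p)$ to the vertical plane, and then adjusting only by symplectic maps preserving this pair (in $d=1$ these are precisely your diagonal rescalings), provably cannot produce the bounds you claim, nor the inclusions of the proposition for all $\rho\in(\w{\rho},1)$. Already for $d=1$: in coordinates adapted to $(A(p),B(p))$ let $0<a<b$ be the slopes of $\w{A},\w{B}$; the identity $\sic(L)=\sqrt{ps}+\sqrt{qr}$ for a monotone $L=\left(\begin{smallmatrix}p&q\\ r&s\end{smallmatrix}\right)$ gives $\sic^{2}=(\sqrt{b}+\sqrt{a})/(\sqrt{b}-\sqrt{a})$, hence $b/a=\bigl((1+\w{\rho}^{2})/(1-\w{\rho}^{2})\bigr)^{2}$. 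After your shear and rescaling one has $-f=at$ and $g=((b-a)t)^{-1}$, and the requirements $at\ge 1/\rho$ and $(b-a)t\le\rho$ are compatible if and only if $\rho^{2}\ge b/a-1=4\w{\rho}^{2}/(1-\w{\rho}^{2})^{2}$, i.e.\ $\rho\ge 2\w{\rho}/(1-\w{\rho}^{2})>2\w{\rho}$. So the simultaneous bound $1/\w{\rho}$ you assert is unattainable, the proposition is not obtained for $\rho$ near $\w{\rho}$, and for $\w{\rho}$ close to $1/3$ your construction yields no admissible $\rho<1/3$ at all --- which is what Proposition \ref{pr:ref2} and Theorem \ref{th:sinai} require downstream. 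The cross-ratio of the four Lagrangian lines $A(p),\w{A},\w{B},B(p)$ equals $(1+\w{\rho}^{2})^{2}/(4\w{\rho}^{2})$, which is exactly the cross-ratio of the extremal admissible configuration of slopes $-1/\w{\rho},-\w{\rho},\w{\rho},1/\w{\rho}$; to win for every $\rho>\w{\rho}$ the coordinate planes must therefore be placed symmetrically inside the two gaps between $\partial\bigl(D\wt^{N}\wc(\wt^{-N}p)\bigr)$ and $\partial\wc(p)$, and in particular neither coordinate plane can be one of the four given Lagrangian subspaces. That symmetric placement is the actual content of the Liverani--Wojtkowski pages you defer to, so your Step 1 as written is not a faithful reduction to them.
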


Note that Claim (1) of the previous proposition is just Darboux's Theorem \cite{ar97}. Let $ g_{\rho} = (\Phi^{-1}_{\rho})^{*}g $ be the Riemannian metric on $ \ww_{a_{\rho}} $ induced by $ g $ via $ \Phi^{-1}_{\rho} $. The metric $ g_{\rho} = (\Phi^{-1}_{\rho})^{*} g $ and the Euclidean metric of $ \R^{d} \times \R^{d} $ are clearly equivalent on $ \ww_{a_{\rho}} $. In view of Proposition \ref{pr:ref1} and using the map $ \Phi_{\rho} $, we identify the set $ \wu_{\rho} $ endowed with the symplectic form $ \omega $ and the Riemannian metric $ g $ with the set $ \ww_{a_{\rho}} $ endowed with the standard symplectic form $ \omega_{0} $ and the Riemannian metric $ g_{\rho} $. Accordingly, we can think of $ \wc $ as a cone field on $ \R^{d} \times \R^{d} $ such that $ \wy_{\rho} \subset \wc(y) $ for every $ y \in \wu_{\rho} $. Also, we identify the first return map $ \wt_{\rho} $ on $ \wu_{\rho} $ induced by $ \wt $ with the map $ \Phi^{-1}_{\rho} \circ \wt_{\rho} \circ \Phi_{\rho}:\ww_{a_{\rho}} \to \ww_{a_{\rho}} $. Note that the pushforward of the restriction of $ \mu $ to $ \wu_{\rho} $ under $ \Phi_{\rho} $ is equal to the restriction of the Lebesgue measure on $ \R^{d} \times \R^{d} $ to $ \ww_{a_{\rho}} $. Let us denote by $ \leb $ the measure $ \Phi_{\rho*}\mu $. The map $ \Phi^{-1}_{\rho} \circ \wt_{\rho} \circ \Phi_{\rho} $ preserves $ \leb $. Unless otherwise specified, this is the measure involved in all the theoretical measure statements throughout this section.

\begin{definition}	
	\label{de:spacedrettimes}		
Let $ \wu_{\rho} $ be as in Proposition \ref{pr:ref1}. Suppose that $ y \in \wu_{\rho} \setminus \wrp_{n} $ and $ y' = \wt^{n}y \in \wu_{\rho} $ for some $ n \ge N $. Let 
\[
T(y) = \left\{N \le i \le n:\wt^{i}y \in \wu_{\rho} \right \}
\]
Set $ i_{0} = 0 $, and consider the largest set $ \{i_{0},\ldots,i_{r}\} \subset T(y) $ such that $ i_{j+1}-i_{j} \ge N $ for every $ 0 \le j \le r-1 $. The non-negative integer $ r=r(y) $ is called the \emph{maximal number of $ N $-spaced return times of the orbit of $ y $ in the time-interval $ [N,n] $}. 
\end{definition}			
			
Let $ b_{\rho} = \sqrt{1-\rho^{4}} $. 
			
\begin{proposition}			                      
	\label{pr:ref2}
	Suppose that $ y \in \wu_{\rho} \setminus \wrp_{n} $ and $ y' = \wt^{n}y \in \wu_{\rho} $ for some $ n \ge N $. Then,
		\begin{enumerate}
			\item $ D_{y} \wt^{n} (\wy_{1/\rho}) \subset \wy_{\rho} $,
			\item $ D_{y'} \wt^{-n} (\wy^{'}_{\rho}) \subset \wy^{'}_{1/\rho} $,
			\item $ u \in \wy_{\rho} \Longrightarrow |\pi_{1}(\dy \wt^{n}u)| \ge b_{\rho} \cdot \rho^{-r(y)} |\pi_{1}(u)| $,
			\item $ u \in \wy^{'}_{1/\rho} \Longrightarrow |\pi_{2}(\dy \wt^{n} u)| \le  b^{-1}_{\rho} \cdot \rho^{r(y)} |\pi_{2}(u)| $.
		\end{enumerate}
\end{proposition}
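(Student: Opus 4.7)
The proof follows verbatim the argument of Proposition 8.4 of \cite{liwo95}; its validity in the present setting rests only on the sufficiency of the point $x$ and on Proposition \ref{pr:ref1}. As a preliminary, taking complements in Proposition \ref{pr:ref1}(2) (and using $\wc(y) \cup \wc'(y) = T_y \sman$) yields the chart-coordinate inclusion $\wy_{1/\rho} \subset \wc(y)$ for every $y \in \wu_\rho$, a fact used repeatedly below.

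For Part (1), I factor $\wt^n = \wt^N \circ \wt^{n-N}$. Since $y' = \wt^n y \in \wu_\rho$, the intermediate point $\wt^{n-N}y$ belongs to $\wt^{-N}\wu_\rho \subset \wt^{-N}U$, while $y \in U$, so both points lie in the domain $U \cup \wt^{-N}U$ of $\wc$. Invariance of $\wc$ gives $D_y\wt^{n-N}(\wc(y)) \subset \wc(\wt^{n-N}y)$, and Proposition \ref{pr:ref1}(3) gives $D_{\wt^{n-N}y}\wt^N(\wc(\wt^{n-N}y)) \subset \wy_\rho$; composing and using $\wy_{1/\rho} \subset \wc(y)$ proves (1). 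Part (2) is then immediate: writing $L = D_y \wt^n$, (1) is equivalent to $L^{-1}(\R^{2d} \setminus \wy_\rho) \subset \R^{2d} \setminus \wy_{1/\rho}$, and taking closures on both sides (noting that $\overline{\R^{2d} \setminus \wy_\rho} = \wy'_\rho$, and analogously for the $1/\rho$ version) yields $L^{-1}(\wy'_\rho) \subset \wy'_{1/\rho}$.

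For Parts (3) and (4), I decompose $D_y\wt^n$ along the $N$-spaced return sequence $i_0 = 0 < i_1 < \cdots < i_r$, with a possible leftover factor of length $<N$ when $n > i_r$. By the argument of the previous paragraph, each intermediate factor $D_{\wt^{i_j}y}\wt^{i_{j+1}-i_j}$ maps $\wy_{1/\rho}$ into $\wy_\rho$. The crux is a symplectic linear-algebra lemma (essentially Lemma 8.5 of \cite{liwo95}): any symplectic linear map $L$ on $\R^d \times \R^d$ satisfying $L(\wy_{1/\rho}) \subset \wy_\rho$ obeys $|\pi_1(Lu)| \ge \rho^{-1}|\pi_1(u)|$ for every $u \in \wy_\rho$ and $|\pi_2(Lu)| \le \rho |\pi_2(u)|$ for every $u \in \wy'_{1/\rho}$. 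Iterating this over the $r = r(y)$ spaced returns produces the factors $\rho^{-r(y)}$ in (3) and $\rho^{r(y)}$ in (4); the constant $b_\rho = \sqrt{1-\rho^4}$ is the sharp boundary bound that absorbs the leftover sub-factor of length $<N$ at the end and, in (4), the passage from the wider cone $\wy'_{1/\rho}$ in which the initial vector sits to the narrower $\wy'_\rho$ appearing after the first spaced return.

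The main obstacle, were one to reprove the result from scratch, is the per-step symplectic linear-algebra lemma yielding the $\rho^{-1}$ expansion (and dual $\rho$ contraction); its proof is a direct but delicate computation using the block form of a symplectic matrix, the identity $\det L = 1$ on suitable Lagrangian projections, and the cone containment $L(\wy_{1/\rho}) \subset \wy_\rho$. Everything else is a routine chaining of these per-step estimates along the spaced return sequence.
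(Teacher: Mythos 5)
Your proposal is correct and follows the same route as the paper, which for this proposition simply defers to Proposition 8.4 of \cite{liwo95}, noting that the argument rests only on the sufficiency of $x$ (via Proposition \ref{pr:ref1}). Your expansion of that argument --- deducing $\wy_{1/\rho}\subset\wc(y)$ by complementation, composing cone invariance with Proposition \ref{pr:ref1}(3) for Part (1), dualizing for Part (2), and chaining the per-return symplectic expansion lemma along the $N$-spaced return times for Parts (3)--(4) --- is a faithful reconstruction of the cited proof.
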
       

\begin{remark}
	To prove Theorem \ref{th:sinai}, we have to assume that $ 0<\rho<1/3 $ (see the end of Section 12 of \cite{liwo95} for more details). This is possible because $ 0<\w{\rho}<1/3 $.
\end{remark}
              
\subsection{Size of the stable and unstable manifolds}        
\label{su:localm}

The next step of the proof of the LET consists in showing that by shrinking the neighborhood $ \, \wu_{\rho} $ if necessary, the unstable(stable) manifold of almost every point of $ \, \wu_{\rho} $ has a uniform `size' or is `cut' by the sets $ \wt^{j} \wsn_{1}(\wt^{-j} \wsp_{1}) $ with $ j \ge N $. The results given below are formulated only for the unstable manifolds. Analogous results hold for stable manifolds as well.

Let $ \wu_{\rho} $ and $ a_{\rho} $ be as in Proposition \ref{pr:ref1}. Let $ \Lambda $ be the set associated to the sufficient point $ x $ as in Proposition \ref{pr:man1}. For every $ y \in \wu_{\rho} $ and $ \ep>0 $, let $ B(y_{i},\ep) = \{z \in \R^{d}: |z-\pi_{i}(y)| < \ep \} $ be the ball of $ \R^{d} $ centered at $ \pi_{i}(y) $ and of radius $ \ep $ for each $ i=1,2 $. 

\begin{definition}
	Let $ y \in \Lambda \cap \wu_{\rho} $. We say that the unstable manifold $ W^{u}_{y} $ has \emph{size} $ \ep>0 $ if there exists a $ C^{2} $ map $ \psi:B(\pi_{1}(y),\ep) \to \wv_{a_{\rho}} $ such that  the graph of $ \psi $ is contained in $ V^{u}_{y} $, and $ \ep $ is the largest number with this property. 
\end{definition}

The next lemma states that if the unstable manifold of a point of $ \wu_{\rho} $ has size $ \ep $, then the projection of that unstable manifold along the second component of $ \R^{d} \times \R^{d} $ is contained in a ball of radius $ \rho \ep $. As a consequence, we see that the stable spaces of the unstable manifold are contained inside the cone $ \wy_{\rho} $.

\begin{lemma}
	\label{le:verticalsize}
	Let $ y \in \Lambda \cap \wu_{\rho} $. If the unstable manifold $ W^{u}_{y} $ has size $ \ep $, then 
	\[ 
	\pi_{2}(W^{u}_{y}) \subset B(\pi_{2}(y),\rho \ep).
	\]
\end{lemma}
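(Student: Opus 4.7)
The plan is to show that the map $\psi$ has Lipschitz constant at most $\rho$ on $B(\pi_{1}(y),\ep)$, and then integrate to bound its range.

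I would start by observing that at any point $z=(u,\psi(u))$ of the graph, the tangent space of the graph, namely $\{(v,D_{u}\psi\cdot v):v\in\R^{d}\}$, coincides with $T_{z}V^{u}_{y}$, and that $\psi(\pi_{1}(y))=\pi_{2}(y)$ since the graph passes through $y$. Consequently, the whole lemma reduces to proving the tangent-space inclusion
\[
T_{z}V^{u}_{y}\subset \wy_{\rho} \qquad \text{for every $z$ on the graph,}
\]
because this forces $|D_{u}\psi\cdot v|\le\rho|v|$ for all $v$, and hence $\|D_{u}\psi\|\le\rho$.

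To establish the inclusion, I would pull $z$ back by $\wt^{-N}$. Iterating Proposition \ref{pr:man1}(4) gives $V^{u}_{y}\subset\wt^{N}V^{u}_{\wt^{-N}y}$ (modulo the singular set), so one can write $z=\wt^{N}w$ with $w\in V^{u}_{\wt^{-N}y}\cap\wt^{-N}\wu_{\rho}$. Since $\Lambda$ is invariant, $\wt^{-N}y\in\Lambda$. Because $\wc$ is trivially jointly invariant with itself on the open set $U\cup\wt^{-N}U$, applying Lemma \ref{le:tangent} to the pair $(\wc,\wc)$ at the base point $\wt^{-N}y$ yields
\[
T_{w}V^{u}_{\wt^{-N}y}\subset T_{w}W^{u}_{\wt^{-N}y}\subset\wc(w),
\]
using $w\in\wt^{-N}U\subset U\cup\wt^{-N}U$. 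Pushing forward by $D_{w}\wt^{N}$ and invoking Proposition \ref{pr:ref1}(3) we then obtain
\[
T_{z}V^{u}_{y}=D_{w}\wt^{N}\bigl(T_{w}V^{u}_{\wt^{-N}y}\bigr)\subset D_{w}\wt^{N}\wc(w)\subset\wy_{\rho},
\]
which is the desired inclusion.

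With $\|D_{u}\psi\|\le\rho$ in hand, the mean value theorem immediately gives, for any $u\in B(\pi_{1}(y),\ep)$,
\[
|\psi(u)-\pi_{2}(y)|=|\psi(u)-\psi(\pi_{1}(y))|\le\rho|u-\pi_{1}(y)|<\rho\ep,
\]
so the $\pi_{2}$-image of the graph (which is the piece of $W^{u}_{y}$ described by $\psi$ in $\wu_{\rho}$) lies in $B(\pi_{2}(y),\rho\ep)$. The main obstacle is the inclusion $T_{w}V^{u}_{\wt^{-N}y}\subset\wc(w)$ at a point $w$ that is \emph{not} the base point of the local unstable manifold: Proposition \ref{pr:man1}(3) only controls the base-point tangent space, so one is forced to invoke Lemma \ref{le:tangent}, which in turn restricts the conclusion of the present lemma to almost every $y\in\Lambda\cap\wu_{\rho}$.
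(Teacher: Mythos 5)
Your argument is sound and, in its core mechanics, is the same as the proof the paper actually relies on: the paper gives no proof of Lemma \ref{le:verticalsize} but states that it is ``exactly as'' Lemma 9.6 of Liverani--Wojtkowski, whose content is precisely your reduction --- show that the tangent spaces of the graph of $\psi$ lie in $\wy_{\rho}$, deduce $\|D\psi\|\le\rho$, and integrate over the (convex) ball $B(\pi_{1}(y),\ep)$. Your use of Proposition \ref{pr:man1}(4) to write $z=\wt^{N}w$ and of Proposition \ref{pr:ref1}(3) to push $\wc(w)$ into $\wy_{\rho}$ is correct, as is the observation that $\wc$ is jointly invariant with itself so Lemma \ref{le:tangent} applies with $\wp=\wc$. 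The one point where you deviate is how the tangent-cone inclusion at non-base points of $V^{u}_{y}$ is obtained: in \cite{liwo95} this is built into the construction of the unstable manifolds (they are produced as graphs whose tangent spaces stay in $\wy_{\rho}$ under the return dynamics, cf.\ Proposition \ref{pr:ref2}(1)), whereas you derive it from Lemma \ref{le:tangent}, which is only an almost-everywhere statement. As you yourself note, this weakens the conclusion from ``every $y\in\Lambda\cap\wu_{\rho}$'' to ``almost every $y$''; that is harmless for everything downstream (Theorem \ref{th:man2}, Sinai's Theorem, and the Tail Bound are all measure-theoretic, and one may simply delete the exceptional null set from $\Lambda$), but it is a genuine mismatch with the lemma as stated, and it is worth recording that the exact statement requires the pointwise tangent-cone control coming from the Katok--Strelcyn/Liverani--Wojtkowski construction rather than from Lemma \ref{le:tangent}.
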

	
\begin{definition}
	Let $ y \in \Lambda \cap \wu_{\rho} $. We say that the unstable manifold $ W^{u}_{y} $ is \emph{cut} by a set $ \wa \subset \wm $ if the intersection of $ \D W^{u}_{y} $ and $ \wa $ is not empty.
\end{definition}
	
Define 
\[
\wu^{1}_{\rho,\eta} = \ww_{a_{\rho} - \eta/b_{\rho}} \subset \wu_{\rho} \qquad \text{for } 0 < \eta < a_{\eta} b_{\eta}.
\]

\begin{theorem} 
	\label{th:man2}
	If $ y \in \Lambda \cap \wu^{1}_{\rho,\eta} $ and the size of $ W^{u}_{y} $ is less than $ \eta $, then $ W^{u}_{y} $ is cut by the set $ \bigcup_{j \ge N} \wt^{j} \wsn_{1} $.	
\end{theorem}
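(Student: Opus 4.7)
The plan is to argue by contradiction: suppose $W^u_y$ has size $\ep \in (0,\eta)$ and $\D W^u_y \cap \wt^j \wsn_1 = \emptyset$ for all $j \ge N$, and then manufacture a point in $\D W^u_y \cap \wt^j \wsn_1$ with $j \ge N$.

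\emph{Step 1: locate a candidate boundary point strictly inside $\wu_\rho$.} The maximality in the definition of size produces a point $z \in \ba{W^u_y}\setminus W^u_y$ (or more precisely, a point where the graph representation of $V^u_y$ ceases to extend) with $|\pi_1(z)-\pi_1(y)|=\ep$. Lemma \ref{le:verticalsize} supplies $|\pi_2(z)-\pi_2(y)| \le \rho \ep$. Combining with $y \in \wu^1_{\rho,\eta}$, so $|\pi_i(y)| < a_\rho - \eta/b_\rho$ for $i=1,2$, and using $b_\rho = \sqrt{1-\rho^4} < 1$ together with $\rho b_\rho < 1$, a direct calculation yields $|\pi_1(z)| < a_\rho - \eta/b_\rho + \eta < a_\rho$ and $|\pi_2(z)| < a_\rho - \eta/b_\rho + \rho\eta < a_\rho$. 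Thus $z$ sits in the open set $\ww_{a_\rho} = \wu_\rho$, not on $\D \wu_\rho$.

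\emph{Step 2: the boundary point is a forward image of $\wsn_1$.} I would prove that $z \in \wt^j \wsn_1$ for some $j \ge 0$. Approach $z$ by $z_n \in W^u_y$; by the increasing-union description of $W^u_y$ provided by Proposition \ref{pr:man1}(4), write $z_n = \wt^{k_n} w_n$ with $w_n \in V^u_{\wt^{-k_n}y}$. Suppose for contradiction that $\wt^{-j}z$ is well-defined for every $j \ge 0$. The cone-field estimates of Proposition \ref{pr:ref2}, applied along the common-return subsequence $\{m_i\}$ of $y$ and $z$ to $\wu_\rho$ (guaranteed by Poincar\'e recurrence), force $d(\wt^{-m_i}y,\wt^{-m_i}z) \to 0$ exponentially. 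Fix a Pesin block $\Lambda_\delta \subset \Lambda$ on which $y' \mapsto V^u_{y'}$ is $C^1$-continuous and the size of $V^u_{y'}$ is bounded below by $\delta > 0$. For $i$ large enough that $\wt^{-m_i}y \in \Lambda_\delta$ and $d(\wt^{-m_i}z,\wt^{-m_i}y) < \delta$, a Luzin-type limit argument along $\{z_n\}$, in the spirit of the proof of Lemma \ref{le:tangent}, then places $\wt^{-m_i}z \in V^u_{\wt^{-m_i}y}$. Pushing forward gives $z = \wt^{m_i}(\wt^{-m_i}z) \in \wt^{m_i} V^u_{\wt^{-m_i}y} \subset W^u_y$, contradicting $z \notin W^u_y$. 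Consequently $z \in \wt^j\wsn_1$ for some $j \ge 0$.

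\emph{Step 3: $j \ge N$.} Since $z \in \wu_\rho \subset U$ and $\bigcup_{i=0}^{N-1} \wt^i \wsn_1 \subset \wrn_N$, while $U \cap \wrn_N = \emptyset$ by the defining quadruple of the sufficient point, $j$ must be at least $N$. Hence $z \in \D W^u_y \cap \wt^j \wsn_1$ with $j \ge N$, contradicting the hypothesis and proving the theorem.

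\emph{Main obstacle.} The subtle step is Step 2. The boundary point $z$ need not lie in the full-measure Pesin set $\Lambda$, so Proposition \ref{pr:man1} is unavailable at $z$ directly; one has to transport information from the approximating sequence $\{z_n\}$ by continuity of the unstable foliation on Pesin blocks, and reconcile the only measurably-varying Pesin-block scale $\delta$ with the uniform backward contraction supplied by Proposition \ref{pr:ref2}. Matching the recurrence times $m_i$ to the block scale $\delta$ so that the exponential decay of $d(\wt^{-m_i}y,\wt^{-m_i}z)$ eventually forces $\wt^{-m_i}z$ into a local unstable manifold of uniform size attached to $\wt^{-m_i}y$ is the principal technical hurdle, and it is essentially the same measurable-to-uniform passage carried out in the proof of Lemma \ref{le:tangent}.
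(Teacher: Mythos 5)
The paper does not write out a proof of Theorem \ref{th:man2}: it defers entirely to Theorem 9.7 of \cite{liwo95}. The argument there runs \emph{forward} rather than backward: one chooses backward return times $n_i$ of $y$ to $\wu^{1}$ at which $\wt^{-n_i}y$ lands in a fixed Pesin block, so that $V^{u}_{\wt^{-n_i}y}$ has size bounded below by some $\delta_0>0$, and pushes this disk forward by $\wt^{n_i}$. Either no cut by $\wrp_{n_i}$ occurs in the relevant range and Proposition \ref{pr:ref2}(3) forces the image, which lies in $W^{u}_{y}$, to have size at least $b_{\rho}\rho^{-r_i}\delta_0>\eta$ for large $i$ (contradicting the hypothesis), or a cut occurs; in the latter case any limit point of forward images of points approaching the cut locus must lie on $\wrn_{n_i}$ (otherwise $\wt^{-n_i}$ would be continuous at that limit and the cut point would have a well-defined image), and since $U\cap\wrn_{N}=\emptyset$ this produces a point of $\D W^{u}_{y}\cap\bigcup_{j\ge N}\wt^{j}\wsn_{1}$.

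Your Steps 1 and 3 are sound, but Step 2 has a genuine gap at exactly the place you flag, and the tool you invoke cannot close it. Knowing that $\wt^{-m_i}y$ lies in a Pesin block and that $d(\wt^{-m_i}z,\wt^{-m_i}y)$ is small does \emph{not} place $\wt^{-m_i}z$ on $V^{u}_{\wt^{-m_i}y}$: a local unstable manifold is a codimension-$d$ object and does not absorb nearby points, and the Luzin-continuity mechanism of Lemma \ref{le:tangent} only transports tangent-cone information along manifolds already known to be unstable manifolds — it cannot decide membership. To conclude $\wt^{-m_i}z\in V^{u}_{\wt^{-m_i}y}$ you would need the characterization of Pesin local unstable manifolds as the set of nearby points whose \emph{entire} further backward orbit stays exponentially close to that of $\wt^{-m_i}y$; but your estimates (Proposition \ref{pr:ref2} applied at return times) give no control between returns, where the cone field is not even defined and $\|D\wt^{-1}\|$ is unbounded near the singularities, so $d(\wt^{-n}z,\wt^{-n}y)$ is uncontrolled at non-return times $n$. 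This is precisely the difficulty the forward ``grow or be cut'' argument sidesteps, since it only iterates a genuine local unstable manifold forward and reads the cut off the singularity set of $\wt^{n_i}$; as written, your backward route does not yield the theorem.
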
     

The proofs of Lemma \ref{le:verticalsize} and Theorem \ref{th:man2} are exactly as the ones of Lemma 9.6 and Theorem 9.7 of \cite{liwo95}. We stress that even if the existence of an homeomorphic extension of the map $ \wt $ up to the boundary of each connected component of $ \sman \setminus \wrp_{1}(\sman \setminus \wrn_{1}) $ is assumed in \cite{liwo95}, this property is not used in the proofs of Lemma 9.6 and Theorem 9.7 (cf. Remark \ref{re:notneeded}).

\subsection{Rectangles and coverings}
From now on, we assume that $ 0 < \rho < 1/3 $. This condition is required for proving Theorem \ref{th:sinai}. We also assume that $ 0 < \eta < a_{\rho} b_{\rho} $ is so small (i.e., $ \wu^{1}_{\rho,\eta} $ is so small compared to $ \wu_{\rho} $) that for every point $ y \in \Lambda \cap \wu^{1}_{\rho,\eta} $, the unstable manifold $ W^{u}_{y} $ can intersects the boundary of $ \wu_{\rho} $ only along its `vertical' part $ \D \nu_{a_{\rho}} \times \nu_{a_{\rho}} $ (cf. Lemma \ref{le:verticalsize}). Since $ \rho $ and $ \eta $ are fixed, to simplify notation, we set 
\[ 
\wu = \wu_{\rho} \qquad \text{and} \qquad \wu^{1} = \wu^{1}_{\rho,\eta}.
\]

We now recall a series of definitions from Sections 9-11 of \cite{liwo95}, which are needed to formulate and prove Theorem \ref{th:sinai}. 

\begin{definition}
	A \emph{rectangle} $ R(y,\zeta) \subset \R^{d} \times \R^{d} $ with the center at $ y \in \R^{d} \times \R^{d} $ and of size $ \zeta>0 $ is the Cartesian product of the closure of the balls $ B(\pi_{1}(y),\zeta/2) $ and $ B(\pi_{2}(y),\zeta/2) $, i.e., 
	\[
	R(y,\zeta) = \bar{B}(\pi_{1}(y),\zeta/2) \times \bar{B}(\pi_{2}(y),\zeta/2).
	\] 
\end{definition}	
	
\begin{definition}
	Let $ R(y,\zeta) $ be a rectangle, and suppose that $ z \in \Lambda \cap R(y,\zeta) $. We say that the unstable manifold $ W^{u}_{z} $ is \emph{connecting in} $ R(y,\zeta) $ if the intersection of $ W^{u}_{z} $ and $ R(y,\zeta) $ is the graph of a $ C^{2} $ map from the closed ball $ \bar{B}(\pi_{1}(y),\zeta/2) $ to the open ball $ B(\pi_{2}(y),\zeta/2) $. 
\end{definition}
	
\begin{definition}	
	The \emph{unstable core} of a rectangle $ R(y,\zeta) $ is given by the set 
	\[ 
	\left\{ z \in R(y,\zeta): \rho |\pi_{1}(z)-\pi_{1}(y)|+|\pi_{2}(z)-\pi_{2}(y)|<\frac{1}{2}(1-\rho)\zeta \right\}. 
	\] 
\end{definition}

The notions of a stable manifold connecting in a rectangle and the stable core of a rectangle can be defined similarly. The importance of the notions of the unstable and stable cores of a rectangle is due to the fact that if an unstable(stable) manifold intersects the unstable(stable) core of a rectangle $ R $, then the manifold is connecting in $ R $. The precise result is the following lemma, whose proof is identical to the one of Lemma 9.12 of \cite{liwo95}.

\begin{lemma}
Let $ R(y,\zeta) $ be a rectangle, and suppose that $ z \in \Lambda \cap R(y,\zeta) $. If the unstable manifold $ W^{u}_{z} $ intersects the unstable core of $ R(y,\zeta) $ and the size of $ W^{u}_{z} $ is greater than $ |\pi_{1}(z)-\pi_{1}(y)| + \zeta/2 $, then $ W^{u}_{z} $ is connecting in $ R(y,\zeta) $. 
\end{lemma}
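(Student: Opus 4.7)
Let $\ep$ denote the size of $W^u_z$, so that (after identifying $\wu_\rho$ with $\ww_{a_\rho}$ via $\Phi_\rho$) there is a $C^2$ map $\psi: B(\pi_1(z),\ep) \to \wv_{a_\rho}$ whose graph is contained in $V^u_z$ and satisfies $\psi(\pi_1(z))=\pi_2(z)$. By hypothesis, pick a point $z^* \in W^u_z$ in the unstable core of $R(y,\zeta)$. To show that $W^u_z$ is connecting in $R(y,\zeta)$, I must verify (i) that $\psi$ is defined on all of $\bar{B}(\pi_1(y),\zeta/2)$ and (ii) that $\psi$ sends this closed ball into the open ball $B(\pi_2(y),\zeta/2)$.

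Item (i) is just the triangle inequality: for $w \in \bar{B}(\pi_1(y),\zeta/2)$,
\[
|w - \pi_1(z)| \le |w-\pi_1(y)| + |\pi_1(y)-\pi_1(z)| \le \tfrac{\zeta}{2} + |\pi_1(z)-\pi_1(y)| < \ep
\]
by the size hypothesis, so $w$ lies in the domain of $\psi$. Item (ii) rests on the fact that $\psi$ is $\rho$-Lipschitz, because the tangent spaces to $W^u_z$ lie inside the cone $\wy_\rho$ after identification (the same geometric fact underpinning Lemma \ref{le:verticalsize}). Combining the Lipschitz bound, the identity $\psi(\pi_1(z^*))=\pi_2(z^*)$, and the defining inequality of the unstable core, one computes for $w \in \bar{B}(\pi_1(y),\zeta/2)$:
\begin{align*}
|\psi(w) - \pi_2(y)| & \le |\psi(w)-\psi(\pi_1(z^*))| + |\pi_2(z^*)-\pi_2(y)| \\
& \le \rho|w - \pi_1(z^*)| + |\pi_2(z^*)-\pi_2(y)| \\
& \le \rho \cdot \tfrac{\zeta}{2} + \rho|\pi_1(z^*)-\pi_1(y)| + |\pi_2(z^*)-\pi_2(y)| \\
& < \tfrac{\rho\zeta}{2} + \tfrac{1}{2}(1-\rho)\zeta \;=\; \tfrac{\zeta}{2},
\end{align*}
proving (ii) with the strict inequality needed for membership in the open ball.

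The only real obstacle is justifying that $\psi$ is $\rho$-Lipschitz, i.e., that $T_w W^u_z \subset \wy_\rho$ at every $w$ on the graph. I would handle this by writing $W^u_z$ as the forward limit of iterates of unstable pieces sitting in $\wu_\rho$ and invoking Proposition \ref{pr:ref2}(1), which propagates $\wy_{1/\rho}$ (and so \emph{a fortiori} $\wy_\rho$) into $\wy_\rho$ under each long return map to $\wu_\rho$. This is precisely the reasoning used inside the proof of Lemma \ref{le:verticalsize}, so I would appeal to that argument rather than repeating it. Once the Lipschitz bound is in hand, the two inequalities above complete the proof.
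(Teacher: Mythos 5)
Your argument is correct and is essentially the standard proof the paper invokes (it simply cites Lemma 9.12 of \cite{liwo95} rather than writing it out): the triangle inequality plus the size hypothesis gives the domain, and the $\rho$-Lipschitz property of the graph (from $T W^{u}\subset\wy_{\rho}$, the same fact behind Lemma \ref{le:verticalsize}) combined with the defining inequality of the unstable core gives the strict containment of the image in $B(\pi_{2}(y),\zeta/2)$. Your numerical estimate $\rho\zeta/2+\tfrac12(1-\rho)\zeta=\zeta/2$ is exactly the intended computation.
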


Next, we introduce a family of neighborhoods of $ x $ approximating $ \wu^{1} $ from the inside. Define 
\[
\wu^{2}_{\delta} = \ww_{a_{\rho}-b_{\rho}/\eta-\delta/2} \qquad \text{for } 0 < \delta < 2(a_{\rho}-\eta/b_{\rho}).
\] 
Note that $ \wu^{2}_{\delta} \subset \wu^{1} $, and that $ \wu^{2}_{\delta} \to \wu^{1} $ as $ \delta \to 0^{+} $. 

\begin{definition}	
For every $ \delta>0 $ and $ c>0 $, let 
\[
\mathcal{N}_{\delta,c} = \left\{c \delta \cdot k \in \wu^{2}_{\delta}: k = (k_{1},k_{2}) \in \Z^{d} \times \Z^{d} \right\},
\]
and let 
\[ 
\mathcal{G}_{\delta,c} = \left\{R(y,\delta): y \in \mathcal{N}_{\delta,c}\right\}.
\] 
\end{definition}

The set $ \mathcal{N}_{\delta,c}$ is a lattice contained in $ \wu^{2}_{\delta} $, whereas the set $ \mathcal{G}_{\delta,c} $ is a collection of rectangles with center at the points of $ \mathcal{N}_{\delta,c}$ and of size $ \delta $. If $ c $ is sufficiently small, then $ \mathcal{G}_{\delta,c} $ is a covering of $ \, \wu^{2}_{\delta} $.
			
We recall that the measure used in the next definition is $ \leb $.			
			
\begin{definition}	
	\label{de:connectingrectangle}
	Let $ 0 < \alpha < 1 $. We say that a rectangle $ R \in \mathcal{G}_{\delta,c} $ is $ \alpha $\emph{-connecting in the unstable(stable) direction} if the measure of the intersection of the union of all unstable(stable) manifolds connecting in $ R $ with the unstable(stable) core of $ R $ is at least an $ \alpha $ fraction of the total measure of the unstable(stable) core. We say that $ R \in \mathcal{G}_{\delta,c} $ is \emph{$ \alpha $-connecting} if $ R $ is $ \alpha $-connecting in both the unstable and stable directions. 
\end{definition}

\subsection{Sketch of the `Proof of the Main Theorem' of \cite{liwo95}}
\label{su:maintheorem}
We now come to the central argument of this proof: it is the exact same argument of the `Proof of the Main Theorem' In Section 11 of \cite{liwo95}. This argument relies on two sets of results that still hold for our setting. The first set of results consists of Lemma 10.1 and Proposition 10.2 of Section 10 of \cite{liwo95}, which are valid for general non-uniformly hyperbolic systems with singularities. The proof of Proposition 10.2 makes use of the Hopf argument and the absolute continuity of the stable and unstable families. We recall that the Hopf argument was devised by Hopf to prove the ergodicity of the geodesic flow on a surface of negative curvature \cite{h} (see also Section 11 of \cite{liwo95} for a detailed account of this argument). The second set of results consists of Lemma 11.3 and Proposition 11.4 of Section 11 of \cite{liwo95}, which are abstract results (measure theory and combinatorics), and Theorem \ref{th:sinai} of this paper. We adopt the terminology of \cite{liwo95}, and so call Theorem \ref{th:sinai} Sinai's Theorem. For reasons of space, we do not repeat the `Proof of the Main Theorem' of \cite[Section 11]{liwo95} here, and instead refer the reader to the paper of Liverani and Wojtkowski. However, for completeness, we provide a sketchy description of this proof. The goal is to show that the neighborhood $ \wu^{1} $ is contained (mod 0) in one ergodic component of $ \wt $. Here are the main steps of the proof.

\begin{enumerate}
	\item Denote by $ \wk_{\delta} $ the subset of $ \mathcal{N}_{\delta,c} $ formed by points $ y \in \mathcal{N}_{\delta,c} $ for which the rectangle $ R(y,\delta) $ is $ \alpha $-connecting.
	\item Let $ g\wk_{\delta} $ be the largest subset of $ \wk_{\delta} $ with the property that for any two pints $ y,z \in g\wk_{\delta} $, there exist finitely many points  $ y_{0} = y,y_{1},\ldots,y_{n}=z $ belonging to $ g\wk_{\delta} $ such that $ y_{i} $ and $ y_{i+1} $ are nearest neighbors in $ g\wk_{\delta} $ for $ i=0,\ldots,n-1 $. If there are several such largest sets, then pick one of them.
	\item Let $ Y_{\delta} $ be the union of all the rectangles with center at $ g \wk_{\delta} $. The set $ Y_{\delta} $ belongs (mod 0) to one ergodic component of $ \wt $. To prove this claim, we need use the Hopf argument and the absolute continuity of the stable and unstable families (see Proposition 10.2 of \cite{liwo95}).
	\item Given two positive measure subsets $ A_{1} $ and $ A_{2} $ of the neighborhood $ \wu^{1} $, there exists $ \delta > 0 $ such that the intersections $ A_{1} \cap Y_{\delta} $ and $ A_{2} \cap Y_{\delta} $ have positive measure. To prove this claim, we need Sinai's theorem, Lemma 11.3 and Proposition 11.4 of \cite{liwo95}.
	\item The previous step allows us to conclude that if $ E_{1} $ and $ E_{2} $ are arbitrary ergodic components of $ \wt $ having intersection with $ \wu^{1} $ of positive measure, then $ E_{1} = E_{2} $ (mod 0). It follows that $ \wu^{1} $ is contained (mod 0) in one ergodic component of $ \wt $.
\end{enumerate}

\subsection{Sinai's Theorem}      
\label{su:sinai1}

In this and the next subsection, we formulate and prove Sinai's Theorem.

\begin{definition}
Let $ \wh $ be the set of all the rectangles of $ \wg $ that are not $ \alpha $-connecting, i.e.,  
\[
\wh = \left\{R \in \mathcal{G}_{\delta,c}: R \text{ is not } \alpha\text{-connecting}\right\}.
\]
Also, let $ \wh^{u(s)} $ be the set of all the rectangles of $ \wg $ that are not $ \alpha $-connecting in the unstable(stable) direction. 
\end{definition}

Of course, we have
\[ 
\wh = \wh^{s} \cup \wh^{u}.
\]

We can now formulate Sinai's Theorem.

\begin{theorem}[Sinai's Theorem] 
	\label{th:sinai}
	There exists $ 0<\alpha<1 $ such that 
	\[ 
	\lim_{\delta \to 0^{+}} \delta^{-1} \leb \left( \bigcup_{R \in \wh} R \right) = 0 \qquad \text{for all } c>0.
	\]
\end{theorem}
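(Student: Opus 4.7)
The plan is to decompose $\wh = \wh^u \cup \wh^s$ and treat the two pieces separately; by the symmetry between $\wt$ and $\wt^{-1}$ (interchanging $\wc$ with $\wc'$, stable with unstable manifolds, and the $\rn_1$ with $\rp_1$ halves of Conditions L2, L3, L4), it suffices to prove the analogous statement for $\wh^u$ alone. I would then pass from the measure of the bad rectangles to the measure of points with short unstable manifolds. If $R(y,\delta)\in\wh^u$, then by Definition \ref{de:connectingrectangle} a $(1-\alpha)$-fraction of its unstable core contains no points carrying a connecting unstable manifold; the lemma following Definition \ref{de:connectingrectangle} then forces a $(1-\alpha)$-fraction of the core to consist of points $z$ with $\mathrm{size}(W^u_z)\le|\pi_1(z)-\pi_1(y)|+\delta/2\le\delta$. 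Because the covering $\wg$ has multiplicity bounded by a constant depending only on $c$ and $d$, summing over $R\in\wh^u$ yields
\[ \leb\Bigl(\bigcup_{R\in\wh^u} R\Bigr) \;\lesssim\; \frac{1}{1-\alpha}\;\leb\bigl\{y\in\wu^2_{\delta}:\mathrm{size}(W^u_y)\le C\delta\bigr\} \]
for a constant $C=C(\rho)$, and the task reduces to controlling this sub-level set.

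Next I would invoke Theorem \ref{th:man2}, which tells us that a point $y\in\Lambda\cap\wu^1$ with $\mathrm{size}(W^u_y) < \eta$ has $W^u_y$ cut by $\bigcup_{j\ge N}\wt^j\wsn_1$. A quantitative refinement, obtained by following the expansion estimate in Proposition \ref{pr:ref2}(3) at the cutting iterate $j$, shows more precisely that such a $y$ lies in an $O(\delta)$-neighborhood of $\wt^j\wsn_1$ for some $j\ge N$, with an additional geometric factor $\rho^{r(y,j)}$ coming from intermediate returns to $\wu$. It is therefore enough to bound
\[ \delta^{-1}\,\leb\!\left(\bigcup_{j\ge N}(\wt^j\wsn_1)(C\delta)\right), \]
and this is what I would split into a finite piece $N\le j\le n$ and a tail $j>n$. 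For the finite piece, Condition L1 says each $\wt^j\wsn_1$ is a regular subset of $\sman$, so Proposition \ref{pr:vol} gives
\[ \leb\bigl((\wt^j\wsn_1)(\delta)\bigr) \;=\; 2\delta\,\Leb_{\wt^j\wsn_1}(\wt^j\wsn_1)+o(\delta). \]
Summing these finitely many contributions, their share of $\delta^{-1}\leb(\cdots)$ is bounded by a constant $K_n$; this suffices provided the tail can be made arbitrarily small uniformly in $\delta$.

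The main obstacle is precisely the tail, i.e.\ the Tail Bound (Proposition \ref{pr:tb}): I need
\[ \limsup_{\delta\to 0^{+}}\;\delta^{-1}\,\leb\!\left(\bigcup_{j>n}(\wt^j\wsn_1)(C\delta)\right) \;\xrightarrow[n\to\infty]{}\; 0. \]
This is where Conditions L3 and L4 enter in an essential way. By L3, almost every point of $\wsn_1$ is u-essential and carries, for every prescribed expansion rate, an invariant continuous cone field that is jointly invariant with $\wc$. Pulling the $\delta$-tube around $\wt^j\wsn_1$ back to a tube around $\wsn_1$ via $\wt^{-j}$, the joint invariance together with Lemma \ref{le:tangent} forces unstable directions of the pulled-back tube to lie in the essential-point cone and hence to be expanded by a definite factor at each return; Condition L4 then guarantees that, even when the orbit passes near $\wsn_1$, the contraction along unstable directions is uniformly bounded, so the transverse thickness of the pulled-back tube is not spuriously inflated. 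The net effect is a decay bound $\leb((\wt^j\wsn_1)(\delta))\lesssim \delta\cdot\lambda^{j}$ for some $\lambda<1$, which makes the tail summable and sends its $\delta\to 0^+$ superior limit to zero as $n\to\infty$.

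The genuinely hard step is this Tail Bound: assembling the essential-point cone field machinery, its joint invariance with $\wc$, and the L4 noncontraction estimate into a single quantitative geometric decay is where all the new work relative to \cite{liwo95} lies. Once the Tail Bound is in hand, the first three steps combine routinely: choosing $n$ so that the tail contributes less than $\varepsilon/2$, then letting $\delta\to 0^+$ for fixed $n$ so that the finite part also drops below $\varepsilon/2$, and finally selecting $\alpha$ close enough to $1$ to absorb the $1/(1-\alpha)$ factor, yields the desired limit and completes the proof.
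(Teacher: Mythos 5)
Your overall architecture (reduce to the unstable half, split the bad rectangles into a ``finite-time'' part and a ``tail'' part, handle the finite part with regularity and Proposition \ref{pr:vol}, handle the tail with L3/L4 and joint invariance) matches the paper's skeleton, and your identification of the Tail Bound as the genuinely new ingredient is correct. But there is a real gap in how you dispose of the finite part. You bound the contribution of the iterates $N\le j\le n$ by $\delta^{-1}\sum_j \leb\bigl((\wt^{j}\wsn_{1})(C\delta)\bigr)\le K_n$ via Proposition \ref{pr:vol} and then say ``this suffices.'' It does not: Proposition \ref{pr:vol} gives $\leb\bigl((\wt^{j}\wsn_{1})(\delta)\bigr)\sim 2\delta\,\Leb_{\wt^{j}\wsn_{1}}(\wt^{j}\wsn_{1})$, i.e.\ an $O(\delta)$ bound, whereas the theorem requires the total to be $o(\delta)$. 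A bounded, nonvanishing contribution from the finite part kills the limit. This is exactly the point of Proposition \ref{pr:sinai1} (Proposition 12.2 of \cite{liwo95}): one must show that a rectangle can have a $(1-\alpha)$-fraction of its unstable core cut by the \emph{finitely many} smooth pieces $\wt^{i}\wsn_{1}$, $N\le i\le M$, only if it lies in an exceptional set of measure $o(\delta)$ (near edges and intersections of those pieces). That argument is where the alignment condition L2 and the quantitative expansion $\sic(D_y\wt^{N})>3$ are used — they force the ``cut'' points associated to a single smooth singularity piece to fill at most a definite fraction of any core, so that for a suitable fixed $\alpha$ (chosen there, not ``close to $1$'') the generic rectangle survives. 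Your sketch never uses L2 or the constant $3$, which is a symptom of the missing step; and choosing $\alpha$ close to $1$ would make the $1/(1-\alpha)$ factor blow up rather than be absorbed.

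A secondary inaccuracy: for the tail you posit a decay $\leb\bigl((\wt^{j}\wsn_{1})(\delta)\bigr)\lesssim\delta\,\lambda^{j}$ in the iterate $j$. No such bound holds or is needed. In the paper the relevant set $Y_{\delta,M}$ is partitioned by the return time $m$ and by the number $k$ of returns to $\wu^{1}$ (Definition \ref{de:partition}); the geometric factor $\lambda^{k}$ comes from expansion accumulated over \emph{returns to the reference neighborhood}, not over all iterates, and the sets $\wt^{-m}Y^{k}_{m}$ for distinct $m$ are shown to be disjoint (Lemma \ref{le:secp}) so that one can sum over $m$ without decay in $m$. The actual smallness of the tail comes from Lemma \ref{le:tb2}: choosing the expansion threshold $t$ large and the exceptional subset $\we_{t,h}\subset\wsn_{1}$ of $\Leb_{-}$-measure $<h$, so that the final bound is of the form $C(\beta h+t^{-1})\delta\sum_k\lambda^{k}$. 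Your qualitative description of how L3, L4 and Lemma \ref{le:tangent} enter is on target, but the quantitative mechanism you propose would not close the estimate.
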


As in \cite{liwo95}, we prove only the unstable version of Theorem \ref{th:sinai} (i.e., with $ \wh $ replaced by $ \wh^{u} $). The stable version can be proved similarly.
	
We proceed as follows. We partition the set of all the rectangles that are not $ \alpha $-connecting in the unstable direction into two complementary sets. To describe precisely these sets, we introduce the concept of an $ (M,\alpha) $-nonconnecting rectangle.

\begin{definition}
	Let $ 0 < \alpha < 1 $. We say that a rectangle $ R \in \wh^{u} $ is \emph{$ (M,\alpha) $-nonconnecting} if at least $ 1-\alpha $ part of the measure of the unstable core of $ R $ consists of points whose unstable manifold is not connecting in $ R $ and is cut by the set $ \bigcup^{M}_{i=N} \wt^{i} \wsn_{1} $. 
\end{definition}

\begin{definition}
Let $ \wj $ be the set of all the rectangles of $ \wh^{u} $ that are $ (M,\alpha) $-nonconnecting. 
\end{definition}

Sinai's Theorem is a consequence of Propositions \ref{pr:sinai1} and \ref{pr:tb}. The first proposition concerns $ (M,\alpha) $-rectangles, whereas the second one concerns the rectangles that are not $ \alpha $-connecting (in the unstable direction) and are not $ (M,\alpha) $-connecting.

\begin{proposition} 
	\label{pr:sinai1}
	There exists $ 0<\alpha<1 $ such that
	\[ 
	\lim_{\delta \to 0^{+}} \delta^{-1} \leb \left( \bigcup_{R \in \wj} R \right) = 0 \qquad \text{for all } c>0 \text{ and } M \ge N.
	\]
\end{proposition}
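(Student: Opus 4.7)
The plan is to follow the strategy of Proposition 12.2 of \cite{liwo95}, adapted to the present setting. The key idea is to show that $\bigcup_{R \in \wj} R$ lies in a small tubular neighborhood of the finite-iterate singular set $\bigcup_{i=N}^{M} \wt^{i} \wsn_{1}$ and then estimate the volume of this neighborhood via Proposition \ref{pr:vol}.

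First I would localize the bad points geometrically. Fix $R = R(y_{0}, \delta) \in \wj$ and pick a bad point $y$ in its unstable core, i.e.\ one whose unstable manifold $W^{u}_{y}$ is cut by $\wt^{i}\wsn_{1}$ at some boundary point $z$ with $N \le i \le M$, and is not connecting in $R$. The non-connecting condition forces the $\pi_{1}$-projection of $z$ to lie within the $\pi_{1}$-extent of $R$; combined with the slope bound $\rho$ on unstable manifolds from Lemma \ref{le:verticalsize} and $y \in R$, this places $z$ within distance $C(\rho) \delta$ of $y_{0}$. Since $R$ itself has diameter of order $\delta$, I conclude that
\[
\bigcup_{R \in \wj} R \;\subset\; \Bigl(\bigcup_{i=N}^{M} \wt^{i} \wsn_{1}\Bigr)(C' \delta)
\]
for some $C' = C'(\rho)$.

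Next I would invoke Proposition \ref{pr:vol}. Condition L1 gives the regularity of $\wrn_{k}$ for every $k$; since $\wt^{i}$ is a diffeomorphism off $\wrp_{i}$, each $\wt^{i}\wsn_{1}$ remains a finite union of smooth $(2d-1)$-dimensional submanifolds, hence $\bigcup_{i=N}^{M} \wt^{i}\wsn_{1}$ is regular. Writing $v_{M}$ for its natural $(2d-1)$-dimensional volume, Proposition \ref{pr:vol} yields
\[
\leb\Bigl(\bigl(\textstyle\bigcup_{i=N}^{M} \wt^{i}\wsn_{1}\bigr)(C' \delta)\Bigr) \;=\; 2 C' \delta \cdot v_{M} + o(\delta),
\]
which provides at first only the crude bound $\leb(\bigcup_{R \in \wj} R) = O(\delta)$.

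The main obstacle is upgrading this from $O(\delta)$ to the required $o(\delta)$, and this is precisely where the small choice of $\alpha$ and the strict expansion $\sic(D_{y}\wt^{N}) > 3$ in the definition of sufficiency enter (cf.\ the remark after Definition \ref{de:suff}). A rectangle belongs to $\wj$ only if a $(1-\alpha)$-fraction of its unstable core is actually cut; using the Alignment Condition L2 (which places the characteristic lines of $\tay\Sigma$ inside $\wc(y)$) together with the slope bound on $W^{u}$ and the expansion rate $> 3$, one shows that such heavy cutting restricts the rectangle centers $y_{0}$ to a strip of width $\lambda_{\alpha}\delta$ around $\bigcup_{i=N}^{M} \wt^{i}\wsn_{1}$, with $\lambda_{\alpha}$ shrinking as $\alpha$ decreases. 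A counting argument over $\mathcal{N}_{\delta,c}$ combined with the refined asymptotic of Proposition \ref{pr:vol} then yields the desired $o(\delta)$ bound, once $\alpha$ is chosen sufficiently small. Making the geometric link between heavy cutting and the thinness of this strip quantitative is the bulk of the technical work, and follows the analysis of Section 12 of \cite{liwo95}.
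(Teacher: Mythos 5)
Your overall framing is right: the paper's own ``proof'' is precisely a citation of Proposition 12.2 of \cite{liwo95} (using only L1 and L2, with L1 needed to invoke Proposition \ref{pr:vol}), and you correctly identify the ingredients and correctly observe that the naive step --- covering $\bigcup_{R\in\wj}R$ by a $C'\delta$-neighborhood of $\bigcup_{i=N}^{M}\wt^{i}\wsn_{1}$ and applying Proposition \ref{pr:vol} --- only yields $O(\delta)$. The problem is the mechanism you propose for upgrading $O(\delta)$ to $o(\delta)$. You claim the $(M,\alpha)$-nonconnecting rectangles are confined to a strip of width $\lambda_{\alpha}\delta$ with $\lambda_{\alpha}\to 0$ as $\alpha\to 0$, and that one concludes by ``choosing $\alpha$ sufficiently small.'' That cannot prove the proposition as stated: the statement asserts that for a single \emph{fixed} $\alpha$ the limit $\delta^{-1}\leb(\bigcup_{R\in\wj}R)\to 0$, whereas a strip of width $\lambda_{\alpha}\delta$ around a fixed regular set of positive $(2d-1)$-volume $v_{M}$ has $\leb$-measure asymptotic to $2\lambda_{\alpha}v_{M}\delta$. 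For fixed $\alpha$ this gives $\limsup_{\delta\to 0}\delta^{-1}\leb(\cdots)\le 2\lambda_{\alpha}v_{M}$, which is small but not zero; and a statement of the form ``for every $\ep$ there is an $\alpha_{\ep}$ with $\limsup\le\ep$'' is not what is needed downstream, since Theorem \ref{th:sinai} and the connectedness argument of Section 11 of \cite{liwo95} require one fixed $\alpha$ for which the limit vanishes.

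The missing idea is that the exact $o(\delta)$ in the argument of \cite[Section 12]{liwo95} does \emph{not} come from shrinking the width of a neighborhood of the full singular set; it comes from a dichotomy. Write $\bigcup_{i=N}^{M}\wt^{i}\wsn_{1}$ as a finite union of regular components $\Gamma_{1},\dots,\Gamma_{p}$ (this is where L1 is genuinely needed --- your side remark that regularity of $\wt^{i}\wsn_{1}$ follows because ``$\wt^{i}$ is a diffeomorphism off $\wrp_{i}$'' is not a proof, since the derivatives blow up near $\wrp_{i}$; regularity of $\wrn_{M+1}$ is an assumption, not a consequence). The rectangles meeting the $C\delta$-neighborhood of the degenerate set $\bigcup_{j}\partial\Gamma_{j}\supset\bigcup_{j\ne j'}(\Gamma_{j}\cap\Gamma_{j'})$ contribute $o(\delta)$, because Proposition \ref{pr:vol} is applied there to closed subsets of \emph{zero} $\Leb_{\wa}$-measure, not to the whole set. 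Away from that degenerate set, a rectangle is crossed by at most one component $\Gamma_{j}$, which by L2 (combined with Propositions \ref{pr:ref1} and \ref{pr:ref2}, since $i\ge N$) traverses $R$ as a graph with slope at most $\rho$ in the unstable direction; the points of the unstable core whose manifolds terminate on such a $\Gamma_{j}$ then fill at most a fraction $2\rho/(1-\rho)<1$ of the core, because $\rho<1/3$ (this is where the expansion $\sic>3$ enters). Choosing the fixed $\alpha$ with $1-\alpha>2\rho/(1-\rho)$, such rectangles are simply \emph{not} $(M,\alpha)$-nonconnecting, so the main part of each component contributes nothing at all to $\bigcup_{R\in\wj}R$, and only the $o(\delta)$ degenerate part survives. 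So the existence of $\alpha$ is pinned down by $\rho$, not obtained by a limiting choice, and the vanishing of the limit rests on Proposition \ref{pr:vol} applied to zero-measure subsets of the regular set. You should also be careful with the localization step itself: ``cut by $\bigcup_{i=N}^{M}\wt^{i}\wsn_{1}$'' refers to a boundary point of the global manifold $W^{u}_{z}$, which need not be the nearby boundary point responsible for the failure to connect in $R$; making the cut point land within $C\delta$ of $R$ requires the size estimate of Theorem \ref{th:man2} and its proof, not just the definition of ``cut.''
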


The proof is exactly as the one of Proposition 12.2 of \cite{liwo95}. We stress that of Conditions L1-L4 only L1 and L2 are used in this proof. In particular, Condition L1 is required to be able to apply Proposition \ref{pr:vol}.

\begin{proposition}[Tail Bound]
	\label{pr:tb}
	For every $ \ep>0 $, there exist $ M_{\ep}>0 $ and $ \delta_{\ep}>0 $ such that
	\[
	\leb \left( \bigcup_{R \in \wh^{u} \setminus \mathcal{H}^{u}_{\delta,c,\alpha,M_{\ep}}} R \right) \le \ep \cdot \delta \qquad \text{for all } 0 < \delta < \delta_{\ep}, 0<\alpha<1 \text{ and } c>0.
	\]
\end{proposition}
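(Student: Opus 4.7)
My plan is to reduce the measure $\leb(\bigcup_{R \in \wh^u \setminus \wj} R)$ to a tube estimate around the far-future iterates of $\wsn_1$ and then to control the volume of those tubes using the Sinai--Chernov Ansatz (L3) together with the alignment condition L2. First, I would localize the bad rectangles. If $R \in \wh^u \setminus \wj$, then a strictly positive fraction of its unstable core consists of points $z$ whose unstable manifold $W^u_z$ is not connecting in $R$ and is not cut by $\bigcup_{i=N}^{M}\wt^i\wsn_1$, so at least one such $z$ exists. The connecting criterion (size of $W^u_z$ exceeding $|\pi_1(z)-\pi_1(y)|+\delta/2$) combined with the defining inequality of the unstable core forces $\operatorname{size}(W^u_z)\le \delta/(2\rho)$. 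For $\delta$ small enough that $\delta/(2\rho)<\eta$, Theorem \ref{th:man2} then says $W^u_z$ is cut by $\bigcup_{j\ge N}\wt^j\wsn_1$, and the exclusion of the near-future forces it to be cut by $\wt^j\wsn_1$ for some $j>M$. Thus $z$ lies within an $O(\delta)$-neighborhood of $\wt^j\wsn_1$, and since $R$ has diameter $O(\delta)$, one concludes $\bigcup_{R \in \wh^u \setminus \wj} R \subset \bigcup_{j>M}\wt^j\wsn_1(C_0\delta)$ for a uniform constant $C_0$.

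Second, I would estimate $\mu(\wt^j\wsn_1(\ep))$ via the $\wt^j$-invariance of $\mu$. By L2, the characteristic line of $T_y\wsn_1$ lies in $\wc(y)$, so a vector in it is expanded by $\wt^j$ in the $\wq$-norm by at least $\sics(D_y\wt^j)$ and hence, by the uniform equivalence between $\sqrt{\wq}$ and $\|\cdot\|$ on $\wc$, by at least $C_1^{-1}\sics(D_y\wt^j)$ in Riemannian norm. Pulling an $\ep$-tube around $\wt^j\wsn_1$ back by $\wt^{-j}$ therefore yields a region around $\wsn_1$ whose width at $y$ in the characteristic-line direction is at most $C_1\ep/\sics(D_y\wt^j)$. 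A Fubini-type argument together with Proposition \ref{pr:vol} then gives $\mu(\wt^j\wsn_1(\ep)) \le C_2\, \ep \int_{\wsn_1} \sics(D_y\wt^j)^{-1}\, d\Leb_-(y)$.

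Third, I would use L3 to bound $\sum_{j>M}\sics(D_y\wt^j)^{-1}$. Luzin's theorem applied to the pointwise assignment $(y,\alpha)\mapsto n(y,\alpha)$ from the definition of u-essential point gives, for every $\alpha>0$, a uniform integer $n_0=n_0(\alpha)$ and a compact set $Y_\alpha\subset\wsn_1$ of $\Leb_-$-measure arbitrarily close to $\Leb_-(\wsn_1)$ on which $\sics(D_y\wt^{n_y})>\alpha$ for some $n_y\le n_0$. The joint invariance of $\wc$ with the essential-point cone fields (L3), combined with Lemma \ref{le:tangent}, justifies the cocycle-like inequality $\sics(D_y\wt^{k+l})\ge C_3^{-1}\,\sics(D_y\wt^k)\,\sics(D_{\wt^k y}\wt^l)$. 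Iterating up to time $kn_0$ and using $\sics\ge 1$ at intermediate times yields $\sics(D_y\wt^j)^{-1}\le C(C/\alpha)^{\lfloor j/n_0\rfloor}$ for $y\in Y_\alpha$. Choosing $\alpha>C_3$ makes this a geometric decay, so $\sum_{j>M}\sics(D_y\wt^j)^{-1}\to 0$ as $M\to\infty$ uniformly on $Y_\alpha$. The complement $\wsn_1\setminus Y_\alpha$ has small $\Leb_-$-measure and can be absorbed by choosing $\alpha$ large. Combining with Steps 1 and 2 gives $\leb(\bigcup R)\le C\delta\cdot o_M(1)$, and setting $M=M_\ep$ large produces the desired inequality.

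The main obstacle will be Step 3: extracting a summable decay rate for $1/\sics(D_y\wt^j)$ from the pointwise and non-uniform Ansatz. Liverani and Wojtkowski obtain a uniform rate for free from the strict Proper Alignment in their LET, which is unavailable here. The replacement is the combination of Luzin's theorem with the joint invariance of the cone fields from L3, together with Lemma \ref{le:tangent}, which ensures the essential-point cone fields compose correctly with $\wc$ along orbits. Verifying this compositional behaviour, and in particular that the loss factor $C_3$ in the cocycle inequality can be beaten by taking $\alpha$ sufficiently large, is where the bulk of the new work compared to \cite{liwo95} lies.
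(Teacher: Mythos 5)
Your first reduction (bad rectangles of $\wh^{u}\setminus\wj$ lie in an $O(\delta)$-neighborhood of $\bigcup_{j>M}\wt^{j}\wsn_{1}$) and your use of Luzin/compactness to upgrade the pointwise Ansatz L3 to uniform expansion on a large compact subset of $\wsn_{1}$ are both in the spirit of the paper's argument (the latter is essentially Lemma \ref{le:tb2}). But the core of your proof does not go through, for two reasons.

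First, your Step 2 estimates $\mu(\wt^{j}\wsn_{1}(\ep))$ by pulling a tube back with $\wt^{-j}$ and dividing by $\sics(D_{y}\wt^{j})$ for $y\in\wsn_{1}$. In this paper's setting that quantity is undefined: the cone field $\wc$ and its quadratic form live only on $U\cup\wt^{-N}U$, not on $\wsn_{1}$ or on a neighborhood of it. This is exactly the hypothesis of Liverani--Wojtkowski that the paper drops, and reinstating it implicitly is not an option. The paper's substitute is to work with the jointly invariant cone fields $\wp_{t,h}$ constructed near $\wsn_{1}$ in Lemma \ref{le:tb2}, and to measure the $\wq$-length of curves inside unstable manifolds (whose tangent spaces lie in $\wp_{t,h}$ by Lemma \ref{le:tangent}), rather than transporting ambient tubes by a Fubini argument. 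Moreover, on the exceptional subset $\we_{t,h}$ of $\wsn_{1}$ where no essential-point cone field is available, the paper must invoke Condition L4 (contraction near the singular set) to control the pullback; your proposal never uses L4, and without it the small-$\Leb_{-}$-measure complement cannot simply be ``absorbed.''

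Second, your Step 3 claims a geometric decay $\sics(D_{y}\wt^{j})^{-1}\le C(C/\alpha)^{\lfloor j/n_{0}\rfloor}$ uniformly in $j$. This is false here: expansion in the cone accrues only at returns to $U$ (respectively, at applications of the essential-point cone fields), and return times are not uniformly bounded, so no decay rate in the \emph{time} $j$ is available. Consequently your sum $\sum_{j>M}\mu(\wt^{j}\wsn_{1}(C_{0}\delta))$ has infinitely many terms each of size $O(\delta)$ and no summability mechanism. The paper's way around this is structural, not quantitative: it partitions the bad set as $Y_{\delta,M}=\bigcup_{k,m}Y^{k}_{m}$ according to the cut time $m$ \emph{and} the number $k$ of returns to $\wu^{1}$, observes (Lemma \ref{le:secp}) that for fixed $k$ the pullbacks $\wt^{-m}Y^{k}_{m}$ are pairwise disjoint in $m$ -- so the infinite sum over $m$ collapses to the measure of a single thin neighborhood of $\wsn_{1}$ -- and then sums over $k$ using the factor $\lambda^{k}=\rho^{k/N}$ supplied by Proposition \ref{pr:ref2}. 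Some such disjointness or counting device is indispensable, and its absence is the decisive gap in your proposal.
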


The proof of this proposition is postponed to Subsection \ref{su:tb}. We now prove Sinai's Theorem.

\begin{proof}[Proof of Theorem \ref{th:sinai}]
		Recall that 
		\[ 
		\wh^{u} = \wj \cup (\wh^{u} \setminus \wj) \qquad \text{for } M \ge N.
		\] 
		Fix $ \ep>0 $. By Proposition \ref{pr:tb}, there exist $ M_{\ep} \ge N $ and $ \delta_{\ep} > 0 $ such that
		\begin{equation}
			\label{eq:step1}
		\leb \left(\bigcup_{R \in \wh^{u} \setminus \mathcal{H}^{u}_{\delta,c,\alpha,M_{\ep}}} R \right) \le \frac{\ep}{2} \delta \qquad \text{for } 0 < \delta < \delta_{\ep}.
		\end{equation}
		Now, apply Proposition \ref{pr:sinai1} to $ \wj $ with $ M = M_{\ep} $. It follows that there exists $ 0 < \alpha < 1 $ such that
		\begin{equation}
			\label{eq:step2}
			\leb \left(\bigcup_{R \in \mathcal{H}^{u}_{\delta,c,\alpha,M_{\ep}}} R \right) \le \frac{\ep}{2} \delta \qquad \text{for } c>0 \text{ and } \delta>0 \text{ sufficiently small}. 
		\end{equation}
		Inequalities \ref{eq:step1} and \ref{eq:step2} imply that there exists $ 0 < \alpha < 1 $ such that
		\[
		\leb \left(\bigcup_{R \in \wh^{u}} R \right) \le \ep \delta \qquad \text{for every } c>0 \text{ and } \delta>0 \text{ sufficiently small}.
		\]
\end{proof}

\subsection{Tail Bound} 
\label{su:tb}
To complete the proof of Sinai's Theorem, we need to prove Proposition \ref{pr:tb}. This proposition is the analog of the result proved in Section 13 of \cite{liwo95}, where it is called the `Tail Bound' estimate. The proof given in Section 13 of \cite{liwo95} is not valid in our setting, because it relies on the assumption that the cone field $ \wc $ is continuous on the entire set $ \sman \setminus \D \sman $, whereas we only assume $ \wc $ to be defined in the neighborhood $ U \cup \wt^{-N} U $. Proposition \ref{pr:tb} is proved at the end of this section, and is a straightforward consequence of Proposition \ref{pr:pretb}.
	
\begin{definition}
 	Given $ M>0 $, let $ Y_{\delta,M} $ the set of points of $ \Lambda \cap \mau^{1} $ whose unstable manifold has size smaller than $ \delta $ and is cut by the set $ \bigcup_{i \ge M+1} \wt^{i} \wsn_{1} $. 
\end{definition}

\begin{proposition}
	\label{pr:pretb} 
	For every $ \ep> 0 $, there exist $ M_{\ep} \ge N $ and $ \delta_{\ep}>0 $ such that  
	\[ 
	\leb (Y_{\delta,M_{\ep}}) \le \ep \cdot \delta \qquad \text{for every } 0 < \delta < \delta_{\ep}.
	\] 
\end{proposition}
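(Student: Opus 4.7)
The plan is to write $Y_{\delta,M}=\bigcup_{i\ge M+1}A_i$ where
\[
A_i=\bigl\{y\in\Lambda\cap\wu^{1}:\text{size}(W^u_y)<\delta\text{ and }\overline{W^u_y}\cap\wt^{i}\wsn_{1}\neq\emptyset\bigr\},
\]
and to prove $\sum_{i>M}\mu(A_i)\le\ep\delta$ for $M=M_\ep$ large enough and $\delta<\delta_\ep$.

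First, for each $y\in A_i$ pick $z\in\overline{W^u_y}\cap\wt^{i}\wsn_{1}$ with $d_{W^u_y}(y,z)<\delta$. Since $\wt^{-i}z\in\wsn_{1}\subset\wsn_{1}(\xi)$, Condition L4 gives $\|D_z\wt^{-i}|_{T_zW^u_y}\|\le\beta$; by continuity this bound extends to the whole short arc of $W^u_y$ from $y$ to $z$ provided $\delta<\delta_0$ is small enough, so $d(\wt^{-i}y,\wsn_{1})\le 2\beta\delta$. Invariance of $\mu$ then yields
\[
\mu(A_i)\le\mu\bigl(\wsn_{1}(2\beta\delta)\cap\wt^{-i}\wu^{1}\bigr).
\]
The naive bound $O(\delta)$ from Proposition~\ref{pr:vol} is not summable in $i$, so a finer argument is required.

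Next I would invoke Condition L3. Given $\ep'>0$, fix a compact set $K\subset\wsn_{1}$ of u-essential points with $\Leb_{-}(\wsn_{1}\setminus K)<\ep'$, and for prescribed $\alpha_0>1$ cover $K$ by finitely many neighborhoods $V_j$ with integers $n_j\le n_0$ such that $\sics(D\wt^{n_j})>\alpha_0$ on $V_j$. The joint invariance of $\wc$ with each cone field $\wc_{w}$ in the family associated to $w\in K$, combined with Lemma~\ref{le:tangent}, transfers $\wc_{w}$-expansion into $\wc$-expansion as soon as the forward orbit enters the domain $U\cup\wt^{-N}U$. Composing over many copies of $n_0$ iterates shows that $\wt^{-i}\wu^{1}$ is exponentially thin, at rate $\alpha_0^{-i/n_0}$, in the unstable normal direction to $\wsn_{1}$ near $K$; a change-of-variables argument in tubular coordinates around $\wsn_{1}$ then gives
\[
\mu\bigl(\wsn_{1}(2\beta\delta)\cap\wt^{-i}\wu^{1}\cap K(\eta)\bigr)\le C\delta\,\alpha_0^{-\lfloor i/n_0\rfloor}
\]
for some small fixed $\eta>0$, while the residual contribution from $\wsn_{1}\setminus K$ is absorbed by choosing $\ep'$ small.

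Summing gives $\sum_{i>M}\mu(A_i)\le C'\delta\bigl(\ep'+\alpha_0^{-M/n_0}/(1-\alpha_0^{-1})\bigr)$, which is less than $\ep\delta$ once $\ep'$ is small and $M_\ep$ large. The principal obstacle is the third paragraph: in \cite{liwo95} a global invariant cone field makes the essential-expansion argument essentially immediate, whereas here $\wc$ is defined only on $U\cup\wt^{-N}U$, so the transfer of expansion from the local cones $\wc_{w}$ at essential points to the geometric thinning of $\wt^{-i}\wu^{1}$ near $\wsn_{1}$ must be mediated via joint invariance over finitely many iterates with uniform control of constants and Jacobians. This gluing is the new ingredient replacing the Tail Bound of \cite[Section~13]{liwo95}.
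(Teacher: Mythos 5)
Your reduction in the second paragraph is essentially sound (it mirrors Case i) of the paper's argument, and the paper likewise has to verify that the whole arc stays in $\wsn_{1}(\xi)$ before invoking L4), but the third paragraph contains a genuine gap that the rest of the proof cannot survive. The claim that $\wt^{-i}\wu^{1}$ is exponentially thin near $\wsn_{1}$ at rate $\alpha_0^{-\lfloor i/n_0\rfloor}$ does not follow from the hypotheses, and is the step that would fail. The definition of a u-essential point gives, for each prescribed $\alpha$, a \emph{single} block of $n$ iterates with $\sics(D\wt^{n})>\alpha$ on a neighborhood; the associated cone field is defined only on that neighborhood and its $n$-th image, so there is no invariant cone structure along the orbit between that block and the next entrance into $U\cup\wt^{-N}U$, and hence no way to compose expansion over successive blocks of $n_0$ iterates. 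Outside the reference neighborhood the map is only non-uniformly hyperbolic: arbitrarily many iterates may elapse with no expansion at all, so no decay rate indexed by the raw iterate count $i$ is available. Your own decomposition $Y_{\delta,M}=\bigcup_i A_i$ therefore leaves you with a sum over $i$ of terms that are each $O(\delta)$ with no summable gain.

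The paper's proof obtains summability from a different pair of mechanisms, and you need both. First, it refines the decomposition by the number $k$ of returns of the orbit to $\wu^{1}$ before the cutting time, writing $Y_{\delta,M}=\bigcup_{k\ge 0}\bigcup_{m\ge M+1}Y^{k}_{m}$; Proposition \ref{pr:ref2} gives genuine geometric contraction $\rho^{k/N}$ of the curve under $\wt^{-n}$ up to the $k$-th return, because expansion is guaranteed only per $N$-spaced return to the reference neighborhood, not per iterate. Second, for fixed $k$ the sum over the cutting time $m$ is handled with no decay whatsoever: the sets $\wt^{-m}Y^{k}_{m}$ are pairwise disjoint (Lemma \ref{le:secp}), so their total measure is bounded by the measure of a single neighborhood of $\wsn_{1}$ of radius $O(\lambda^{k}\delta)$. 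Finally, the factor $\ep$ is produced not by a decay rate in $i$ but by splitting $\wsn_{1}$ into a set $\we_{t,h}$ of $\Leb_{-}$-measure $<h$ (where only the L4 bound $\beta$ is used) and a good set where the one-shot expansion $t$ at essential points, transported to $\wc$ by joint invariance and Lemma \ref{le:tangent}, shrinks the relevant neighborhood radius to $O(\delta/t)$; one then takes $h$ small and $t$ large. Your instinct to use L3 and joint invariance is correct, but it must be deployed as a single large expansion shrinking a neighborhood radius, not iterated into an exponential rate.
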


To prove Proposition \ref{pr:pretb}, we need Lemma \ref{le:tb2} below. We observe that although Proposition \ref{pr:pretb} (as well as Proposition \ref{pr:tb}) concerns a subset of $ \wu^{1} $, the proof of the proposition uses an argument that is global, i.e., that is not restricted to $ \wu^{1} $ only. This fact can be clearly seen in the statement of Lemma \ref{le:tb2} which is about the existence of a cone field jointly invariant with $ \wc $ on a neighborhood of the singular set $ \wsn_{1} $. We also wish to stress that only Conditions L1, L3 and L4, but not L2, are used in the proof of Proposition \ref{pr:pretb}. Condition L3 is required to prove Lemma \ref{le:tb2}.

Let $ \wq $ be the quadratic form associated to the cone field $ \wc $ on $ U \cup \wt^{-N}U $ (see Subsection \ref{su:quad}). Recall that $ A(\ep) $ is the $ \ep $-neighborhood of the set $ A \subset \sman $. 

\begin{lemma}
	\label{le:tb2} 
	For every $ t>0 $ and $ 0<h<1 $, there exist an integer $ M_{t,h}>0 $, two compact subsets $ \ws_{t,h} $ and $ \we_{t,h} $ of $ \wsn_{1} $, a real number $ r_{t,h}>0 $ and a cone field $ \wp_{t,h} $ on $ \ws_{t,h}(r_{t,h}) $ such that 
	\begin{itemize}
		\item $ \wsn_{1} = \ws_{t,h} \cup \we_{t,h} $ and $ \Leb_{-}(\we_{t,h})<h $,
		\item the cone fields $ \wp_{t,h} $ and $ \wc $ are jointly invariant,
		\item if $ z \in \ws_{t,h}(r_{t,h}) $ and $ \wt^{j} z \in U $ with $ j \ge M_{t,h} $, then 
		\[ 
		\inf_{u \in \innt \wp_{t,h}(z) \setminus \{0\}} \frac{\sqrt{\wq_{\wt^{j}z}(D_{z} \wt^{j}u)}}{\|u\|}>t.
		\]
	\end{itemize}
\end{lemma}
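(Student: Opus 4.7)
The plan is to invoke the Sinai-Chernov Ansatz (Condition L3) to cover $\wsn_{1}$, up to a set of $\Leb_{-}$-measure less than $h$, by finitely many neighborhoods of u-essential points; on each such neighborhood L3 supplies an invariant continuous cone field jointly invariant with $\wc$ whose associated quadratic form expands arbitrarily strongly after a bounded number of iterates. These local cone fields are patched into the cone field $\wp_{t,h}$, and the required $\wq$-expansion at time $j \ge M_{t,h}$ is obtained by combining joint invariance with the monotonicity of $\wq$ along orbits in $U$ and the strict cone expansion of $\wc$ from the sufficiency of $x$.

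In detail, let $E \subset \wsn_{1}$ be the set of u-essential points; by L3, $\Leb_{-}(\wsn_{1} \setminus E) = 0$. Using regularity of $\Leb_{-}$, choose compact sets $\ws_{t,h} \subset E$ and $\we_{t,h} \supset \wsn_{1} \setminus \ws_{t,h}$ with $\ws_{t,h} \cup \we_{t,h} = \wsn_{1}$ and $\Leb_{-}(\we_{t,h}) < h$. Fix a large parameter $\alpha > 0$, its precise value to be chosen at the end. For each $y \in \ws_{t,h}$, u-essentiality with parameter $\alpha$ yields $(U_{y}, n_{y}, \wc_{y})$ with $\wc_{y}$ jointly invariant with $\wc$ (by L3) and with
\[
\frac{\sqrt{Q_{y}(D_{w}\wt^{n_{y}}u)}}{\|u\|} > \alpha \qquad \text{for every } w \in U_{y}, \ u \in \innt \wc_{y}(w) \setminus \{0\},
\]
where $Q_{y}$ denotes the quadratic form generating $\wc_{y}$. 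Shrinking the $U_{y}$ if necessary and using the compactness of $\ws_{t,h}$, extract a finite subcover $\{U_{y_{i}}\}_{i=1}^{p}$ of $\ws_{t,h}$, set $M_{t,h} := \max_{i} n_{y_{i}}$, and let $r_{t,h} > 0$ be a Lebesgue number of the cover so that $\ws_{t,h}(r_{t,h}) \subset \bigcup_{i} U_{y_{i}}$. For $z \in \ws_{t,h}(r_{t,h})$ define $\wp_{t,h}(z) := \wc_{y_{i(z)}}(z)$ for any chosen index $i(z)$ with $z \in U_{y_{i(z)}}$; joint invariance of $\wp_{t,h}$ with $\wc$ is then immediate from the joint invariance of each $\wc_{y_{i}}$ with $\wc$.

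To verify the expansion inequality, fix $z \in U_{y_{i}} \cap \ws_{t,h}(r_{t,h})$ and $u \in \innt \wp_{t,h}(z) \setminus \{0\}$. U-essentiality gives $\sqrt{Q_{y_{i}}(D_{z}\wt^{n_{y_{i}}}u)} > \alpha \|u\|$, and continuity of $Q_{y_{i}}$ on $\overline{U_{y_{i}}}$ turns this into a Riemannian norm expansion $\|D_{z}\wt^{n_{y_{i}}}u\| \ge c_{i}\alpha\|u\|$ for some $c_{i}>0$. For any $j \ge M_{t,h}$ with $\wt^{j}z \in U$, joint invariance applied with $k=j$ places $D_{z}\wt^{j}u$ in $\wc(\wt^{j}z)$, so $\wq_{\wt^{j}z}(D_{z}\wt^{j}u) \ge 0$. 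To upgrade this to a strict lower bound, I would narrow $\wp_{t,h}$ to a compact sub-cone of $\wc_{y_{i}}$ and use the strict cone expansion $\sic(D\wt^{N})>3$ of the sufficiency hypothesis, together with continuity, to force the image at the first time $k_{0} \ge n_{y_{i}}$ at which the orbit lands in $U$ to lie in a compact sub-cone of $\innt \wc$; on such a sub-cone, continuity and compactness give $\wq(v) \ge c'\|v\|^{2}$. The norm expansion then translates into a $\wq$-bound at time $k_{0}$, and monotonicity of $\wq$ along orbits in $U$ (the equivalent of invariance of $\wc$, cf.\ the proposition following Definition~\ref{de:cone}) propagates the bound from $k_{0}$ to $j$. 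Choosing $\alpha$ large enough to dominate $t$ times the finitely many constants $c_{i}, c'$ yields $\sqrt{\wq_{\wt^{j}z}(D_{z}\wt^{j}u)} > t\|u\|$.

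The main obstacle is the transfer of expansion from $Q_{y_{i}}$ to $\wq$: these two quadratic forms are defined by a priori unrelated Lagrangian decompositions, and the joint invariance $D\wt^{k} \wc_{y_{i}} \subset \wc$ provided by L3 is only a non-strict inclusion, leaving open the possibility that $\wq$ vanishes on boundary directions of the image cone. The remedy — narrowing $\wp_{t,h}$, exploiting the strict expansion of $\wc$ from the sufficiency of $x$, and combining continuity estimates on compact sets with the monotonicity of $\wq$ — is the technical heart of the proof; once the finite cover $\{U_{y_{i}}\}_{i=1}^{p}$ is fixed, $\alpha$ is enlarged a posteriori to beat all local constants uniformly in $i$.
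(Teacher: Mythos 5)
Your overall architecture matches the paper's: use L3 and the regularity of $\Leb_{-}$ to exhaust $\wsn_{1}$, up to a compact set of measure less than $h$, by a compact set of u-essential points; extract a finite cover by the neighborhoods furnished by u-essentiality; patch the local cone fields into $\wp_{t,h}$ on a suitable $r$-neighborhood; and take $M_{t,h}$ to be the maximum of the local times. The gap is precisely in the step you yourself flag as the ``main obstacle'': the transfer of the expansion from $Q_{y_{i}}$ to $\wq$. Your proposed remedy does not work. You want to force $D_{z}\wt^{k_{0}}u$ into a compact sub-cone of $\innt\wc$ at the first return time $k_{0}$ to $U$ by invoking $\sic(D\wt^{N})>3$; but that hypothesis concerns the $\wq$-ratio between $\wt^{-N}U$ and $U$ and says nothing about the orbit segment from a neighborhood of $\wsn_{1}$ to its first entry into $U$, and the joint invariance supplied by L3 is only the non-strict inclusion $D\wt^{k}\wc_{y_{i}}\subset\wc$, so the image of even a compact sub-cone of $\wc_{y_{i}}$ may touch the boundary of $\wc$, where your bound $\wq(v)\ge c'\|v\|^{2}$ fails. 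In addition, enlarging $\alpha$ ``a posteriori'' is circular, since the cover $\{U_{y_{i}}\}$ and the times $n_{y_{i}}$ themselves depend on $\alpha$.

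The obstacle is in fact illusory, and the paper resolves it directly. By the equivalence between cone inclusion and monotonicity of the associated quadratic forms (Theorem 4.4 and Proposition 6.1 of \cite{liwo95}; cf.\ Remark \ref{re:invariance}), the non-strict joint invariance already yields
\[
\inf_{u\in\innt\wp_{y}(z)\setminus\{0\}}\sqrt{\frac{\wq_{\wc}(D_{z}\wt^{j}u)}{\wq_{\wp_{y}}(u)}}\ \ge\ 1
\]
whenever $z$ lies in the domain of $\wp_{y}$ and $\wt^{j}z\in U$. Factoring $D_{z}\wt^{j}=D_{\wt^{k}z}\wt^{j-k}\circ D_{z}\wt^{k}$ with $k=k_{y}$, this gives $\sqrt{\wq_{\wc,\wt^{j}z}(D_{z}\wt^{j}u)}\ \ge\ \sqrt{\wq_{\wp_{y},\wt^{k}z}(D_{z}\wt^{k}u)}\ >\ t\|u\|$, the last inequality being exactly the u-essentiality bound $\sigma^{*}_{\wp_{y}}(D_{z}\wt^{k})>t$ (one simply takes $\alpha=t$; no a posteriori enlargement is needed). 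The possible vanishing of $\wq$ on the boundary of $\wc$ is harmless because the strictly positive lower bound is produced by $\wq_{\wp_{y}}$ at time $k$ and merely propagated forward, not generated by positivity of $\wq_{\wc}$ on its cone. With this step in place, the remainder of your argument (finite subcover, disjointified patching, $M_{t,h}=\max_{i}k_{y_{i}}$, and the choice of $r_{t,h}$) goes through essentially as in the paper, modulo the paper's additional care in working component by component and discarding the boundary of each component, which has $\Leb_{-}$-measure zero by Proposition \ref{pr:vol}.
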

\begin{proof}
	In this proof, the set $ B(y,r) $ denotes the open ball of $ \wm $ centered at $ y \in \wm $ and of radius $ r>0 $. 
	
	It is enough to prove the lemma with $ \wsn_{1} $ replaced by an arbitrary component $ \Sigma $ of $ \wsn_{1} $. Let us start by observing that Proposition \ref{pr:vol} applied to $ \Sigma $ implies that $ \Leb_{-}(\D \Sigma)=0 $. From this fact and Condition L3, we then see that $ \Leb_{-} $-a.e. point of the interior of $ \Sigma $ is u-essential. Now, let $ h>0 $. The regularity of the measure $ \Leb_{-} $ allows us to find a compact subset $ \Sigma_{1} $ of the interior of $ \Sigma $ such that $ \Leb_{-}(\Sigma \setminus \Sigma_{1})<h $ and every point of $ \Sigma_{1} $ is u-essential. Next, let $ t>0 $. By the definition of u-essential point, for every $ y \in \Sigma_{1} $, we can find a real $ p_{y}>0 $, an integer $ k_{y}>0 $ and a continuous invariant cone field $ \wp_{y} $ on $ B(y,p) \cup \wt^{k} B(y,p) $ that is jointly invariant with $ \wc $ such that 
	\begin{equation}
		\label{eq:expansion}
	\sigma^{*}_{\wp_{y}} (D_{z} \wt^{k}) > t \qquad \text{for every } z \in B(y,p). 
	\end{equation}

Let $ \wq_{\wc} $ and $ \wq_{\wp_{y}} $ denote the quadratic forms associated to cone fields $ \wc $ and $ \wp_{y} $, respectively. Since $ \wp_{y} $ and $ \wc $ are jointly invariant, it follows  that  
	\begin{equation}
		\label{eq:noncontraction}
	\inf_{u \in \innt \wp_{y}(z) \setminus \{0\}} \sqrt{\frac{\wq_{\wc}(D_{z} \wt^{j}u)}{\wq_{\wp_{y}}(u)}} \ge 1
	\end{equation}
	provided that $ z \in B(y,p) \cup \wt^{k} B(y,p) $ and $ \wt^{j}z \in U $ with $ j \ge 0 $ (see Remark \ref{re:invariance}). Combining \eqref{eq:expansion} and \eqref{eq:noncontraction}, we can conclude that if $ z \in B(y,p) $ and $ \wt^{j}z \in U $ with $ j \ge k $, then 
	\begin{equation}
		\label{eq:exp}
	\begin{split}
		\inf_{u \in \innt \wp_{y}(z) \setminus \{0\}} \frac{\sqrt{\wq_{\wc}(D_{z} \wt^{j}u)}}{\|u\|}
		& = \inf_{u \in \innt \wp_{y}(z) \setminus \{0\}} \frac{\sqrt{\wq_{\wp_{y}}(D_{z} \wt^{k}u)}}{\|u\|} \cdot \sqrt{\frac{\wq_{\wc}(D_{z} \wt^{j}u)}{
		\wq_{\wp_{y}}(D_{z}\wt^{k}u)}} \\
		& \ge \sigma^{*}_{\wp_{y}} (D_{z} \wt^{k}) \cdot \inf_{u \in \innt \wp_{y}(\wt^{k}z) \setminus \{0\}} \sqrt{\frac{\wq_{\wc}(D_{\wt^{k} z} \wt^{j-k}u)}{\wq_{\wp_{y}}(u)}} \\
		& > t.
	\end{split}
	\end{equation}

Because of the compactness of $ \Sigma_{1} $, there exist $ m>0 $ points $ y_{1},\ldots,y_{m} $ of $ \Sigma_{1} $ such that $ \Sigma_{1} \subset \bigcup^{m}_{i=1} B(y_{i},p(y_{i})) $. Let $ B_{1} = B(y_{1},p(y_{1})) $, and let 
\[ 
B_{i} = B(y_{i},p(y_{i})) \setminus \bigcup^{i-1}_{j=1} B_{j} \qquad \text{for } i = 2,\ldots,m. 
\]
The sets $ B_{1},\ldots,B_{m} $ are pairwise disjoint and $ \bigcup^{m}_{i=1} B_{i} =  \bigcup^{m}_{i=1} B(y_{i},p(y_{i})) $. Next, we define the cone field $ \wp $ on $ \bigcup^{m}_{i=1} B_{i} $ by setting 
\[
\wp = \wp_{y_{i}} \quad \text{on} \quad B_{i} \qquad \text{for each } i=1,\ldots,m.
\] 
It is easy to see that $ \wp $ and $ \wc $ are jointly invariant. Now, denote by $ M $ the maximum of $ k(y_{1}),\ldots,k(y_{m}) $. Then, inequality \eqref{eq:exp} implies that if $ z \in \bigcup^{m}_{i=1} B_{i} $ and $ \wt^{j}z \in U $ with $ j \ge M $, then 
\begin{equation}
	\label{eq:four}
	\inf_{u \in \innt \wp(z) \setminus \{0\}} \frac{\left(\wq_{\wc}(D_{z} \wt^{j}u)\right)^{\frac{1}{2}}}{\|u\|} > t.
\end{equation}

Since the compact set $ \Sigma_{1} $ is contained in the open set $ \bigcup^{m}_{i=1} B_{i} $, we can find a compact set $ K \subset \wm $ and two numbers $ q,r>0 $ such that 
\[
\Sigma_{1} \subset \Sigma_{1}(q) \subset K \subset K(r) \subset \bigcup^{m}_{i=1} B_{i}.
\]
Let us define 
\[ 
\ws = K \cap \Sigma \qquad \text{and} \qquad \we = \Sigma \setminus \Sigma_{1}(q).
\] 
It is clear that the sets $ \ws $ and $ \we $ are compact and $ \ws \cup \we = \Sigma $. Moreover, we have 
\[
\Leb_{-}(\we) \le \Leb_{-}(\Sigma \setminus \Sigma_{1})<h.
\] 
The set $ \we $ is the analog of the set $ E \cup \overline{B^{\zeta}_{M}} $ in \cite[Section 13]{liwo95}. Finally, since  $ K \subset K(r) $, we see that $ \ws(r) \subset K(r) $, which in turn implies that inequality \eqref{eq:four} holds for every $ z \in \ws(r) $ such that $ \wt^{j}z \in U $ with $ j \ge M $. To complete the proof, we set $ M_{t,h} = M $, $ \ws_{t,h} = \ws $, $\we_{t,h} = \we,r_{t,h} = r $ and $ \wp_{t,h} = \wp $.
\end{proof}

\begin{definition}
	For every $ z \in Y_{\delta,M} $, define
	\[
	m(z) = \min \left\{i \ge M+1: \D W^{u}_{z} \cap \wt^{i} \wsn_{1} \neq \emptyset \right\},
	\]
	and 
	\[
	k(z) = \# \left\{i: 1 \le i \le m(z)-M \text{ and } \wt^{-i}z \in \mau^{1} \right\}.
	\] 
\end{definition}
Note that $ k(z) $ is just the number of returns of $ z $ to $ \wu^{1} $ in the time-interval $ [-1,m(z)-M] $. 

\begin{definition}
	\label{de:partition}
		For every $ m \ge M+1 $ and $ k \ge 0 $, let 
		\[ 
		Y^{k}_{m} = \left\{z \in Y_{\delta,M}: m(z)=m \text{ and } k(z)=k \right\}. 
		\] 
\end{definition}

It is easy to see that $ Y_{\delta,M} = \bigcup_{k \ge 0 } \bigcup_{m \ge M+1} Y^{k}_{m} $.

The proof of the next lemma is exactly as the proof of the analogous statement in \cite[Section 13, page 61]{liwo95}. We include this proof here for the convenience of the reader.

\begin{lemma} 
	\label{le:secp}  
	For every $ k \ge 0 $ and $ M>0 $, we have
	\[ 
	\leb \left(\bigcup_{m \ge M+1} Y^{k}_{m}\right) \le \mu \left(\bigcup_{m \ge M+1} \wt^{-m} Y^{k}_{m}\right).
	\]
\end{lemma}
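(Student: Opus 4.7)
The plan is to exploit the $\wt$-invariance of $\mu$, together with the fact that $\leb$ coincides with $\mu$ on $\wu^1$ (Proposition \ref{pr:ref1} gives $\Phi_\rho^*\omega_0=\omega$, so the chart is volume-preserving). The sets $Y^k_m$ with $k$ fixed and $m$ varying are pairwise disjoint, as they are level sets of the function $z\mapsto m(z)$. If I can also prove that the preimages $\wt^{-m}Y^k_m$ are pairwise disjoint, then
\[
\leb\Bigl(\bigcup_{m\ge M+1} Y^k_m\Bigr)=\sum_{m\ge M+1}\mu(Y^k_m)=\sum_{m\ge M+1}\mu(\wt^{-m}Y^k_m)=\mu\Bigl(\bigcup_{m\ge M+1}\wt^{-m}Y^k_m\Bigr),
\]
which is even stronger than the claimed inequality.

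The only real content is therefore the disjointness of the $\wt^{-m}Y^k_m$. I would argue by contradiction: suppose $z_1\in Y^k_{m_1}$ and $z_2\in Y^k_{m_2}$ with $M+1\le m_1<m_2$ satisfy $\wt^{-m_1}z_1=\wt^{-m_2}z_2$, and set $n=m_2-m_1>0$. Then $z_2=\wt^n z_1$, so the orbit of $z_1$ returns to $\wu^1$ at time $n$. I would now compare $k(z_1)$ and $k(z_2)$ directly by the substitution $\wt^{-i}z_2=\wt^{n-i}z_1$: as $i$ runs over $[1,m_2-M]$, the index $j=n-i$ runs over $[M-m_1,n-1]$. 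This range strictly contains the range $[-(m_1-M),-1]$ used to count $k(z_1)$, it contains the index $j=0$ (contributing $z_1\in\wu^1$ itself), and may contain further forward indices $j\ge 1$. Counting only the matched backward returns plus $j=0$ already yields $k(z_2)\ge k(z_1)+1$, contradicting $k(z_1)=k(z_2)=k$.

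The main (and essentially only) obstacle is this disjointness argument; once it is in hand, the lemma follows by straightforward summation together with $\wt$-invariance of $\mu$ on $\sman$ and the identification $\leb=\mu$ on $\wu^1$. A minor technicality is to make sure that for the points $z\in Y^k_m\subset\Lambda\cap\wu^1$ the iterates $\wt^{-m}z$ are indeed defined (possibly after discarding a null set), which is immediate from $\Lambda\subset\bigcup_{k\in\Z}\wt^k U$ and $\mu(\bigcup_k\wt^k U\setminus\Lambda)=0$ in Proposition \ref{pr:man1}.
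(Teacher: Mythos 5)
Your proposal is correct and follows essentially the same route as the paper's proof: the key point in both is the pairwise disjointness of the sets $\wt^{-m}Y^{k}_{m}$, established by showing that a common point would force $k(z_{2})\ge k(z_{1})+1$ via the extra return at the intermediate time, after which the estimate follows from the $\wt$-invariance of $\mu$ and the identification of $\leb$ with $\mu$ on the Darboux chart. Your index bookkeeping just makes explicit the step the paper states in one line, and your observation that one actually gets equality is a harmless strengthening.
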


\begin{proof}
	We claim that the sets $ \wt^{-m_{1}} Y^{k}_{m_{1}} $ and $ \wt^{-m_{2}} Y^{k}_{m_{2}} $ are disjoint provided that $ m_{1} \neq m_{2} $. On the contrary, suppose that this is not true. Then, we must have $ y \in \wt^{-m_{1}}Y^{k}_{m_{1}} \cap \wt^{-m_{2}}Y^{k}_{m_{2}} \neq \emptyset $ for some $ m_{1}<m_{2} $. Let $ z_{1} = \wt^{m_{1}}y $ and $ z_{2} = \wt^{m_{2}}y $. Since both $ z_{1} $ and $ z_{2} $ belong to $ \wu^{1} $, it follows that $ k(z_{2}) \ge k(z_{1})+1 $, which contradicts the fact that $ z_{1} \in Y^{k}_{m_{1}} $ and $ z_{2} \in Y^{k}_{m_{2}} $. By our claim, the invariance of $ \mu $ and the equality $ \leb\left(\bigcup_{m \ge M+1} Y^{k}_{m}\right) = \mu \left(\bigcup_{m \ge M+1} Y^{k}_{m}\right) $ (see the discussion after Proposition \ref{pr:ref1}), we can conclude that
		\begin{align*}
		\leb\left(\bigcup_{m \ge M+1} Y^{k}_{m}\right) & = \mu \left(\bigcup_{m \ge M+1} Y^{k}_{m}\right) \le \sum_{m \ge M+1} \mu(Y^{k}_{m}) \\
		& \le \sum_{m \ge M+1} \mu \left(\wt^{-m} Y^{k}_{m}\right) \le \mu \left(\bigcup_{m \ge M+1} \wt^{-m} Y^{k}_{m}\right).
		\end{align*}
\end{proof}

Before proving Proposition \ref{pr:pretb}, we need to make a few last observations. 
	
\subsubsection*{\bf Semi-norms and lengths} Since $ \wy_{\rho} \subset \innt \wc(y) $, it follows that $ \wq_{y}(u)>0 $ for every $ u \in \wy_{\rho} $ and $ y \in \mau $. Then, it is not difficult to see that the functional 
	\[ 
	u \in \wy_{\rho} \mapsto \sqrt{\wq_{y}(u)} 
	\] 
	defines a semi-norm on $ \wy_{\rho} $ for every $ y \in \wu $, which we denote by $ \|\cdot\|_{\wq} $. Another semi-norm $ \|\cdot\|_{1} $ on $ \wy_{\rho} $ for every $ y \in \mau $ is obtained by setting 
	\[ 
	\|u\|_{1} = |\pi_{1}(u)| \qquad \text{for every } u \in \wy_{\rho}. 
	\] 
	Recall that $ \|\cdot\| $ is the norm generated by the Riemannian metric $ g $. The semi-norms $ \|\cdot\|_{\wq} $, $\|\cdot\|_{1} $, $ \|\cdot\| $ vary continuously with $ y $ and are equivalent on $ \wu $. In particular, we have
	\begin{equation}
		\label{eq:equivalencenorm}
		\|\cdot\| \le (1+\rho^{2})^{\frac{1}{2}} \|\cdot\|_{1} \qquad \text{and} \qquad \|\cdot\|_{\wq} \le q \|\cdot\|_{1},
	\end{equation}
	where
	\[
	q = \sup_{u \in \wy_{\rho} \setminus \{0\}} \frac{\sqrt{\wq(u)}}{\|u\|_{1}}.
	\]

Now, given a differentiable curve $ \gamma $ with parameterization $ \alpha \mapsto \gamma(\alpha) \in \mau $, let us denote by $ \ell_{\wq}(\gamma) $, $ \ell_{1}(\gamma) $, $ \ell(\gamma) $ the length of $ \gamma $ with respect to the semi-norms $ \|\cdot\|_{\wq} $, $ \|\cdot\|_{1} $, $ \|\cdot\| $, respectively. Relations \eqref{eq:equivalencenorm} imply 
	\begin{equation}
		\label{eq:equivalencelength}
	\ell(\gamma) \le (1+\rho^{2})^{\frac{1}{2}} \ell_{1}(\gamma) \qquad \text{and} \qquad \ell_{\wq}(\gamma) \le q \ell_{1}(\gamma).
	\end{equation}

We are now in a position to prove Lemma \ref{pr:pretb}. 

\begin{proof}[Proof of Proposition \ref{pr:pretb}]
	Recall that $ \rho $ and $ \eta $ have been fixed, and do not depend on $ \delta $. Also, recall that $ \eta $ has been chosen so small that for every point $ y \in \Lambda \cap \wu^{1} $, the unstable manifold $ W^{u}_{y} $ can intersects the boundary of $ \wu $ only along its `vertical' part $ \D \nu_{a_{\rho}} \times \nu_{a_{\rho}} $ by Lemma \ref{le:verticalsize}. Now, let $ t>0 $ and $ 0 < h < 1 $. Later on, we will choose $ t $ and $ h $ properly. Let $ \we_{t,h} $, $ \ws_{t,h} $, $ M_{t,h} $, $ r_{t,h} $, $ \wp_{t,h} $ be the compact subsets of $ \, \wsn_{1} $, the positive numbers and the cone field, respectively, as in Lemma \ref{le:tb2}. Recall that $ \wc $ and $ \wp_{t,h} $ are jointly invariant. By Lemma \ref{le:tangent}, we know that there exists a set $ Z_{t,h} $ of zero measure such that  
	\begin{equation}
		\label{eq:insidecone}
		T_{z} W^{u}_{y} \subset \wp_{t,h}(z) \qquad \text{for } z \in W^{u}_{z} \cap \ws_{t,h}(r_{t,h}) \text{ and } y \in \Lambda \setminus Z_{t,h}.
	\end{equation}
	
	Now, let $ y \in Y^{k}_{m} \setminus \wt^{m}Z_{t,h} $. Since the intersection of $ \D W^{u}_{y} $ and $ \wt^{m} \wsn_{1} $ is not empty, there exist $ w \in \D W^{u}_{y} \cap \wt^{m} \wsn_{1} $ and a $ C^{1} $ curve $ \gamma:[0,1] \to W^{u}_{y} $ such that $ \gamma(0)=y $ and $ \gamma(1) = w $. We have $ \ell_{1}(\gamma) < \delta $, because the size of $ W^{u}_{y} $ is less than $ \delta $.
	
	The rest of this proof consists of two parts. In the first part, we estimate $ \ell(\wt^{-m}\gamma) $. Since $ \wt^{-m} w \in \wsn_{1} $, such an estimate allows us to conclude that the curve $ \wt^{-m}\gamma $ and in particular the point $ \wt^{-m}y $ are contained in a neighborhood of $ \wsn_{1} $ of radius equal to $ \ell(\wt^{-m} \gamma) $. This fact is used in combination with Lemma \ref{le:secp} in the second part of the proof to obtain the wanted estimate on $ \leb(Y_{\delta,M}) $. To find an estimate for $ \ell(\wt^{-m} \gamma) $, we use first the expanding properties of $ \wt^{n} $ along the directions contained in the cone $ \wy_{\rho} $ (see Proposition \ref{pr:ref2}), and then the expanding properties of the map $ \wt^{-m+n} $ deduced in Lemma \ref{le:tb2} and assumed in Condition L4. 
	
	Our first task is to estimate $ \ell(\wt^{-m} \gamma) $. Note that $ \wt^{-m} \gamma $ is indeed a $ C^{1} $ curve, because the restriction of $ \wt^{-m} $ to the unstable manifold $ W^{u}_{y} $ is a diffeomorphism. Let $ 0 < n \le m-M $ be the time when $ y $ makes its $ k $th return to $ \mau^{1} $. We claim that the curve $ \wt^{-n} \gamma $ is contained in $ \wu $ if $ 0 < \delta < \rho \eta $. On the contrary, suppose that $ \wt^{-n} \gamma $ is not contained in $ \wu $ for $ 0 < \delta < \rho \eta $. Since our choice of $ \eta $ guarantees that curve $ \wt^{-n}\gamma $ intersects the `vertical' boundary $ \D \nu_{a_{\rho}} \times \nu_{a_{\rho}} $ of $ \wu $, and $ \mau $ is the $ \eta b^{-1}_{\rho} $-neighborhood of $ \mau^{1} $ and $ \wt^{-n}y \in \wu^{1} $, it follows that the $ \ell_{1} $-length of the connected component of $ \wu \cap \wt^{-n} \gamma $ containing $ \wt^{-n}y $ is greater than $ \eta b^{-1}_{\rho} $. By Lemma \ref{le:verticalsize}, the tangent space of the unstable manifold $ W^{u}_{\wt^{-n}y} $ at the point $ z $ is contained in $ \wy_{\rho} $ for every $ z \in \wu \cap \wt^{-n} \gamma $. We can then apply Proposition \ref{pr:ref2} to $ \wu \cap \wt^{-n} \gamma $, and conclude that 
	\[
	\ell_{1}(\gamma) \ge b_{\rho} \rho^{-r(\wt^{-n}y)} \ell_{1}(\wu \cap \wt^{-n} \gamma),
	\]
	where $ r(\wt^{-n}y) $ is the maximal number of $ N $-spaced returns of $ \wt^{-n}y $ to $ \wu $ (see Definition \ref{de:spacedrettimes}). It is easy to see that $ r(\wt^{-n}y) \le k/N-1 $. Therefore 
	\begin{equation}
		\label{eq:inequalityl1}
		\ell_{1}(\gamma) \ge b_{\rho} \rho^{1-k/N} \ell_{1}(\wu \cap \wt^{-n} \gamma).
	\end{equation}
	But $ \rho<1 $ so that 
	\[ 
	\ell_{1}(\gamma) \ge \rho^{1-k/N} \eta \ge \rho \eta > \delta.
	\]
	which contradicts the fact that $ \ell_{1}(\gamma) < \delta $. 

Let
\[
\lambda = \rho^{1/N}, \qquad c_{1} = \frac{1}{b_{\rho} \rho}, \qquad c_{2} = \frac{1}{\rho(1-\rho^{2})^{1/2}}, \qquad c_{3} = c_{2} q.
\]
Since we have established that $ \wt^{-n} \gamma \subset \wu $, using again \eqref{eq:inequalityl1}, we obtain
\begin{equation}
	\label{eq:inequalityl2}
	\ell_{1}( \wt^{-n} \gamma) \le c_{1} \lambda^{k} \ell_{1}(\gamma) < c_{1} \lambda^{k} \delta.
\end{equation}
From \eqref{eq:equivalencelength}, it follows that
\[
\ell(\wt^{-n} \gamma) < c_{2} \lambda^{k} \delta \qquad \text{and} \qquad \ell_{\wq}(\wt^{-n} \gamma) < c_{3} \lambda^{k} \delta.
\]

Since $ \wt^{-m} w \in \wsn_{1} $ and $ \wsn_{1} = \we_{t,h} \cup \ws_{t,h} $, we have two possibilities: i) $ \wt^{-m} w \in \we_{t,h} $, and ii) $ \wt^{-m} w \in \ws_{t,h} $. To estimate $ \ell(\wt^{-m} \gamma) $, we consider the two cases separately.

Case i): suppose that $ \wt^{-m} w \in \we_{t,h} $. Let $ \beta $ and $ \xi $ be the numbers as in Condition L4. We claim that the curve $ \wt^{-m} \gamma $ is contained in $ \wsn_{1}(\xi) $ if $ \delta<\xi/(\beta\sqrt{1+\rho^{2}}) $. We argue again by contradiction. If our claim is not true, then since $ \wt^{-m} w \in \wsn_{1} $, it follows that 
\[
\ell(\wsn_{1}(\xi) \cap \wt^{-m} \gamma) \ge \xi.
\]
By Condition L4, we then obtain
\[
\ell(\gamma) \ge \xi/\beta > \delta (1+\rho^{2})^{\frac{1}{2}},
\]
which together with \eqref{eq:equivalencelength} implies
\[
\ell_{1}(\gamma) > \delta
\]
contradicting the fact that $ \ell_{1}(\gamma) < \delta $. Now, that we know that $ \wt^{-m} \gamma \subset \wsn_{1}(\xi) $, Condition L4 implies that the tangent spaces of the curve $ \wt^{-n} \gamma $ contracts uniformly by a factor $ \beta $ under the action of the differential $ D \wt^{n-m} $. Therefore,
\[
\ell(\wt^{-m} \gamma) \le \beta \ell(\wt^{-n} \gamma) < \beta c_{2} \lambda^{k} \delta.
\]
From this inequality, we see that the curve $ \wt^{-m} \gamma $ is contained in the neighborhood of $ \we_{t,h} $ of radius equal to $ \beta c_{2} \lambda^{k} \delta $. In particular,
\begin{equation}
	\label{eq:neighborhood1}
	\wt^{-m}y \in \we_{t,h}(\beta c_{2} \lambda^{k} \delta).
\end{equation}

Case ii): suppose that $ \wt^{-m} w \in \ws_{t,h} $. We claim that the curve $ \wt^{-m} \gamma $ is contained in $ \ws_{t,h}(r_{t,h}) $ for $ \delta<t r_{t,h}/q $. As before, we argue by contradiction. Suppose that our claim is not true. Then, since $ \wt^{-m} w \in \ws_{t,h} $, it follows that 
\begin{equation}
	\label{eq:equationr}
\ell(\wt^{-m} \gamma) \ge r_{t,h}.
\end{equation}
Since $ \wt^{-m}y \in \Lambda \setminus Z_{t,h} $ and of course $ \wt^{-m} \gamma \subset W^{u}_{\wt^{-m}y} $, Relation \eqref{eq:insidecone} implies 
	\[
	T_{z} \wt^{-m} \gamma \subset \wp_{t,h}(z) \qquad \text{for every } z \in \ws_{t,h}(r_{t,h}) \cap \wt^{-m} \gamma.
	\]
By applying Lemma \ref{le:tb2} to $ T_{z} \wt^{-m} \gamma $ for every $ z \in \ws_{t,h}(r_{t,h}) \cap \wt^{-m} \gamma $, we obtain
\[
\ell_{\wq}(\gamma) \ge t r_{t,h} > q \delta.
\]
This inequality implies
\[
\ell_{1}(\gamma) > \delta,
\]
which contradicts the fact that $ \ell_{1}(\gamma) < \delta $. Therefore $ \wt^{-m} \gamma \subset \ws_{t,h}(r_{t,h}) $, and so Lemma \ref{le:tb2} implies 
\[
\ell(\wt^{-m} \gamma) < \frac{1}{t}\ell_{\wq}(\wt^{-n} \gamma) \le \frac{1}{t} c_{3} \lambda^{k} \delta,
\]
which is the wanted estimate of $ \ell(\wt^{-m} \gamma) $. Since $ \wt^{-m} w \in \ws_{t,h} $, we see that $ \wt^{-m} \gamma $ is contained inside the neighborhood of $ \ws_{t,h} $ of radius $ c_{3}\lambda^{k}\delta/t $. In particular,
\begin{equation}
	\label{eq:neighborhood2}
	\wt^{-m}y \in \ws_{t,h} (t^{-1} c_{3} \lambda^{k} \delta).
\end{equation}

Now, suppose that $ \delta < \min\{\rho \eta,\xi/(\beta\sqrt{1+\rho^{2}},t r_{t,h}/q\} $. Combining inclusions \eqref{eq:neighborhood1} and \eqref{eq:neighborhood2} together, we obtain
\[
\wt^{-m} Y^{k}_{m} \setminus Z_{t,h} \subset \we_{t,h}(\beta c_{2} \lambda^{k} \delta) \cup \ws_{t,h} (t^{-1} c_{3} \lambda^{k} \delta).
\]
Taking the union of the sets $ \wt^{-m} Y^{k}_{m} $ for $ m \ge M_{t,h} + 1 $, it follows that
\[
\left(\bigcup_{m \ge M_{t,h}+1} \wt^{-m} Y^{k}_{m}\right) \setminus Z_{t,h} \subset \we_{t,h}(\beta c_{2} \lambda^{k} \delta) \cup \ws_{t,h} (t^{-1} c_{3} \lambda^{k} \delta).
\]
Recall that $ \mu(Z_{t,h}) = 0 $. Using $ \mu \le \tau \Leb $ and Proposition \ref{pr:vol}, we get
\begin{align*}
\mu \left(\bigcup_{m \ge M_{t,h}+1} \wt^{-m} Y^{k}_{m} \right) & \le \mu\left(\we_{t,h}(\beta c_{2} \lambda^{k} \delta)\right) + \mu\left(\ws_{t,h} (t^{-1} c_{3} \lambda^{k} \delta)\right) \\
& \le \tau \Leb\left(\we_{t,h}(\beta c_{2} \lambda^{k} \delta)\right) + \tau \Leb \left(\ws_{t,h} (t^{-1} c_{3} \lambda^{k} \delta)\right) \\
& \le 2 \tau \left(\beta c_{2} h + \frac{1}{t} c_{3} \Leb_{-}(\wsn_{1}) \right) \lambda^{k} \delta.
\end{align*}
By Lemma \ref{le:secp}, it follows that
\begin{align*}
\leb\left(\bigcup_{m \ge M_{t,h}+1} Y^{k}_{m}\right) & \le \mu \left(\bigcup_{m \ge M_{t,h}+1} \wt^{-m} Y^{k}_{m}\right) \\
& \le 3 \tau \left(\beta c_{2} h + \frac{1}{t} c_{3} \Leb_{-}(\wsn_{1}) \right) \lambda^{k} \delta
\end{align*}
so that 
\begin{equation}
	\label{eq:estimatey}
	\begin{split}
\leb(Y_{\delta,M_{t,h}}) & \le \leb\left(\bigcup_{k \ge 0} \bigcup_{m \ge M_{t,h}+1} Y^{k}_{m}\right) \\ 
& \le  3 \tau \left(\beta c_{2} h + \frac{1}{t} c_{3} \Leb_{-}(\wsn_{1}) \right) \delta \sum_{k \ge 0} \lambda^{k} \\ 
& \le 3 \tau \left(\beta c_{2} h + \frac{1}{t} c_{3} \Leb_{-}(\wsn_{1}) \right) \frac{\delta}{1-\lambda}.
	\end{split}
\end{equation}
We can now finish the proof. Fix $ \ep>0 $. There exist $ t>0 $ and $ 0 < h < 1 $ such that 
\[
 3 \tau \left(\beta c_{2} h + \frac{1}{t} c_{3} \Leb_{-}(\wsn_{1}) \right) < \ep.
\]
For such $ t $ and $ h $, inequality \eqref{eq:estimatey} implies that there exist 
\[ 
M_{\ep} = M_{t,h} \qquad \text{and} \qquad \delta_{\ep} = \min\left\{\rho \eta,\frac{\xi}{\beta\sqrt{1+\rho^{2}}},\frac{t r_{t,h}}{q}\right\} 
\] 
such that 
\[
\leb(Y_{\delta,M_{\ep}}) \le \ep \delta \qquad \text{for } 0 < \delta < \delta_{\ep}.
\]
The proof is complete.			
\end{proof}

We are now in a position to prove Proposition \ref{pr:tb}.

\begin{proof}[Proof of Proposition \ref{pr:tb}]
The proposition is an immediate consequence of Proposition \ref{pr:pretb}, once we have proved the following inequality
	\begin{equation}
		\label{eq:MYinequality}
		\leb \left(\bigcup_{R \in \wh^{u} \setminus \wj} R \right) \le \frac{k(c)}{1-\alpha} \cdot \frac{2}{1-2\rho} \cdot \leb \left(Y_{\delta,M}\right),
	\end{equation}
	where $ k(c) > 0 $ denotes the maximum number of rectangles of $ \mathcal{G}_{\delta,c} $ whose intersection is not empty. Now, the proof of the previous inequality goes as follows. From the definitions of $ Y_{\delta,M} $ and the unstable core of a rectangle, it follows that
	\[
	\left(\wh^{u} \setminus \wj\right) \le k(c) \cdot \frac{\mu(Y_{\delta,M})}{(1-\alpha) \mu(\text{Unstable core of } R)}.
	\]
	Therefore
	\[
	\leb \left(\wh^{u} \setminus \wj\right) \le \frac{k(c)}{1-\alpha} \cdot \frac{\mu(R)}{\mu(\text{Unstable core of } R)} \cdot \mu(Y_{\delta,M}).
	\]
	Finally, an easy computation gives 
	\[
	\frac{\mu(R)}{\mu({\text{Unstable core of }R})} = \frac{2}{1-2\rho} \qquad \text{for }  0 < \rho < 1/3. 
	\]
\end{proof}

\subsection{Conclusion of the proof} 
In Subsection \ref{su:maintheorem}, we have proved that the neighborhood $ \wu^{1} $ is contained (mod 0) in one ergodic component of $ \wt $. We can actually obtain a stronger conclusion, namely, the neighborhood $ \wu^{1} $ belongs to one ergodic component of $ \wt^{m} $ for every $ m>0 $. In fact, since $ \wt^{m} $ preserves the measure $ \mu $, and has the same stable and unstable manifolds of $ \wt $, the whole argument delineated in this section works not just for $ \wt $ but for $ \wt^{m} $ as well with $ m>0 $.
 
We now show that $ \wu^{1} $ is indeed contained (mod 0) in a Bernoulli component of $ \wt $. 

\begin{lemma}
	\label{le:bernoulli}
	The neighborhood $ \, \wu^{1} $ is contained (mod 0) in a Bernoulli component of $ \wt $.
\end{lemma}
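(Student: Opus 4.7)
The plan is to combine the ergodic decomposition of $ \wt|_{\wu^{1}} $ with the remark already made by the authors at the beginning of this subsection: the entire argument of Subsections \ref{su:refn}--\ref{su:tb} applies verbatim to any iterate $ \wt^{m} $ with $ m>0 $. Indeed, $ \wt^{m} $ preserves $ \mu $, has the same local stable and unstable manifolds as $ \wt $ by Proposition \ref{pr:man1}, and the cone field $ \wc $ (together with its expansion factor, possibly after renaming $ N $) continues to witness sufficiency for $ \wt^{m} $; Conditions L1--L4 transfer because $ \wrp_{k}(\wt^{m}) \subset \wrp_{mk}(\wt) $ and $ \wrn_{k}(\wt^{m}) \subset \wrn_{mk}(\wt) $, while the joint invariance statement in L3 and the contraction bound L4 are preserved under iteration. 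Consequently, for every $ m>0 $, the neighborhood $ \wu^{1} $ is contained (mod 0) in a single ergodic component of $ \wt^{m} $.

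The first step is to let $ E $ denote the ergodic component of $ \wt $ that contains $ \wu^{1} $ (mod 0). By sufficiency, $ E $ has positive measure and all Lyapunov exponents of $ \wt|_{E} $ are non-zero, so the structural theorem recalled just before Section \ref{se:statementlet} (Theorem 13.1 of \cite[Part II]{ks}, together with \cite{ch,ow}) applies: there exist a positive integer $ m $ and a partition $ E = B_{0} \cup \cdots \cup B_{m-1} $ (mod 0) into Bernoulli components with $ \wt B_{i} = B_{i+1 \bmod m} $, and each restriction $ \wt^{m}|_{B_{i}} $ is a Bernoulli automorphism, in particular ergodic.

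The second step is to identify the $ B_{i} $'s with the ergodic components of $ \wt^{m}|_{E} $. Since each $ B_{i} $ is $ \wt^{m} $-invariant and $ \wt^{m}|_{B_{i}} $ is ergodic, the collection $ \{B_{0},\ldots,B_{m-1}\} $ is precisely the ergodic decomposition of $ \wt^{m}|_{E} $. Combining this with the result of the previous paragraph applied to $ \wt^{m} $, the single $ \wt^{m} $-ergodic component containing $ \wu^{1} $ (mod 0) must be contained in $ E $ (because $ \wt^{m} $-invariant sets are in particular $ \wt $-invariant up to iterates, and $ \wu^{1} \subset E $ mod 0), and hence coincides (mod 0) with exactly one of the $ B_{i} $'s. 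This gives the desired Bernoulli conclusion.

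The only potentially delicate point is the transfer of Conditions L1--L4 from $ \wt $ to $ \wt^{m} $; this is routine but should be spelled out. Regularity of $ \wrp_{k}(\wt^{m}) $ and $ \wrn_{k}(\wt^{m}) $ follows from the fact that they are finite unions of $ \wt $-iterates of the regular sets $ \wsp_{1} $, $ \wsn_{1} $; alignment is inherited because the characteristic line of the tangent space to an iterated singularity is moved by $ D\wt^{j} $ into a cone that, by invariance of $ \wc $, still lies in $ \wc $ (resp.\ $ \wc' $); the Sinai--Chernov Ansatz L3 is a statement about $ \wsp_{1} $ and $ \wsn_{1} $, which are the same sets whether one works with $ \wt $ or $ \wt^{m} $; and L4 for $ \wt^{m} $ follows from L4 for $ \wt $ together with the uniform bound on $ \|D\wt\| $ near any fixed compact neighborhood of the singularity set. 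Once these checks are made, the argument above concludes the proof.
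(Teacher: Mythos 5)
Your proposal is correct and follows essentially the same route as the paper: decompose the ergodic component $E \supset \wu^{1}$ into Bernoulli components $B_{1},\ldots,B_{m}$, observe that these are the ergodic components of $\wt^{m}|_{E}$, and use the fact that the ergodicity argument applies to $\wt^{m}$ to conclude $\wu^{1}$ lies (mod 0) in a single $B_{i}$. The only difference is that the paper justifies the applicability of the argument to $\wt^{m}$ more lightly — simply noting that $\wt^{m}$ preserves $\mu$ and has the same stable and unstable families as $\wt$, which is all the Hopf-type argument of Subsection \ref{su:maintheorem} uses — rather than re-verifying sufficiency and Conditions L1--L4 for $\wt^{m}$ as a new map.
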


\begin{proof}
Let $ E $ be the ergodic component containing (mod 0) the neighborhood $ \wu^{1} $. It is clear that $ E $ has positive measure and non-zero Lyapunov exponents almost everywhere. Thus, let $ B_{1},\ldots,B_{m} $ be the Bernoulli components of $ \wt $ whose union gives $ E $. From the definition of a Bernoulli component, we see that the sets $ B_{1},\ldots,B_{m} $ are ergodic components of $ \wt^{m} $. By the considerations at the beginning of this subsection, it follows that $ \wu^{1} $ belongs (mod 0) to an ergodic component of $ \wt^{m} $. We can therefore conclude that $ \wu^{1} $ must be contained (mod 0) in one of the sets $ B_{1},\ldots,B_{m} $.
\end{proof}

To complete the proof of Theorem \ref{th:LET}, we observe that since $ \wt^{l} $ is a local diffeomorphism at $ x $, there exists a neighborhood $ \mathcal{O} $ of $ x $ such that $ \wt^{l}|_{\mathcal{O}} $ is a diffeomorphism and $ \wt^{l} \mathcal{O} \subset \wu^{1} $. By Lemma \ref{le:bernoulli}, the set $ \mathcal{O} $ belongs (mod 0) to a Bernoulli component of $ \wt $.

\end{document}